\numberwithin{equation}{section}
\theoremstyle{plain}
\newtheorem{theorem}{Theorem}[section]
\newtheorem{lemma}[theorem]{Lemma}
\newtheorem{proposition}[theorem]{Proposition}
\theoremstyle{remark}
\newtheorem{definition}[theorem]{Definition}
\newtheorem*{notation}{Notation}
\renewcommand{\le}{\leqslant}
\renewcommand{\ge}{\geqslant}
\renewcommand{\emptyset}{\varnothing}
\newcommand{\toto}{\rightrightarrows}
\newcommand{\supp}{\operatorname{spt}} 
\newcommand{\spt}{\supp} 
\newcommand{\cl}{\operatorname{cl}} 
\newcommand{\intr}{\operatorname{int}} 
\newcommand{\dom}{\operatorname{dom}} 
\newcommand{\rge}{\operatorname{rge}} 
\newcommand{\proj}{\operatorname{proj}} 
\newcommand{\hzn}{\operatorname{hzn}} 
\newcommand{\ball}{\mathds{B}} 
\newcommand{\oball}{\ball^\circ}
\newcommand{\sphere}{\mathds{S}} 
\newcommand{\argmax}{\operatornamewithlimits{\arg\max}}
\newcommand{\restr}{\llcorner} 
\newcommand{\prob}{\operatorname{\mathbb{P}}}
\newcommand{\Fell}{\mathcal{F}}
\providecommand{\E}{\mathbb{E}}
\newcommand{\DD}{\mathbb{D}}
\newcommand{\reals}{\mathbb{R}}
\newcommand{\NN}{\mathbb{N}}
\newcommand{\id}{\mathrm{Id}}
\newcommand{\Rd}{\reals^{d}}
\newcommand{\Rdd}{\Rd \times \Rd}
\newcommand{\Rk}{\reals^{k}}
\newcommand{\Rl}{\reals^{\ell}}
\newcommand{\Rkl}{\Rk \times \Rl}
\newcommand{\eps}{\varepsilon}
\newcommand{\diff}{\mathrm{d}}
\newcommand{\Vto}{\raisebox{-0.5pt}{\,\scriptsize$\stackrel{\raisebox{-0.5pt}{\mbox{\tiny $V$}}}{\longrightarrow}$}\,}
\newcommand{\wto}{\raisebox{-0.5pt}{\,\scriptsize$\stackrel{\raisebox{-0.5pt}{\mbox{\tiny $\mathrm{w}$}}}{\longrightarrow}$}\,}
\newcommand{\inpr}[1]{\langle{#1}\rangle}
\newcommand{\norm}[1]{\left\lvert{#1}\right\rvert}
\newcommand{\con}{\operatorname{con}}
\newcommand{\Prob}{\mathcal{P}}
\providecommand{\Pcm}{\Prob_{\mathrm{cm}}}
\providecommand{\varPicm}{\varPi_{\mathrm{cm}}}
\providecommand{\Fellm}{\Fell_{\mathrm{m}}}
\providecommand{\Fellmm}{\Fell_{\mathrm{mm}}}
\providecommand{\Fellcm}{\Fell_{\mathrm{cm}}}
\providecommand{\Fellmcm}{\Fell_{\mathrm{mcm}}}
\begin{document}

\begin{frontmatter}
\title{Graphical and uniform consistency of estimated optimal transport plans}
\runtitle{Estimated optimal transport plans}

\begin{aug}
\orcid{0000-0002-0444-689X}
\author[A]{\fnms{Johan}~\snm{Segers}\ead[label=e1]{johan.segers@uclouvain.be}}
\address[A]{UCLouvain, ISBA/LIDAM, Voie du Roman Pays 20, bte L1.04.01, 1348 Louvain-la-Neuve, Belgium\printead[presep={,\ }]{e1}}

\end{aug}

\begin{abstract}
A general theory is provided delivering convergence of maximal cyclically monotone mappings containing the supports of coupling measures of sequences of pairs of possibly random probability measures on Euclidean space. The theory is based on the identification of such a mapping with a closed subset of a Cartesian product of Euclidean spaces and leveraging tools from random set theory. Weak convergence in the appropriate Fell space together with the maximal cyclical monotonicity then automatically yields local uniform convergence of the associated mappings. Viewing such mappings as optimal transport plans between probability measures with respect to the squared Euclidean distance as cost function yields consistency results for notions of multivariate ranks and quantiles based on optimal transport, notably the empirical center-outward distribution and quantile functions.
\end{abstract}

\begin{keyword}
\kwd{convex function}
\kwd{coupling measure}
\kwd{distribution function}
\kwd{Fell topology}
\kwd{maximal monotone mapping}
\kwd{quantile function}
\end{keyword}

\end{frontmatter}





\section{Introduction and main results}

\subsection{Motivation}
\label{subsec:motiv}

Let $\Prob(\Rd)$ be the space of Borel probability measures equipped with the weak topology. For $P, Q \in \Prob(\Rd)$, let $\varPi(P, Q)$ be the set of coupling measures $\pi \in \Prob(\Rdd)$ with margins $P$ and $Q$, i.e., $\pi(B \times \Rd) = P(B)$ and $\pi(\Rd \times B) = Q(B)$ for Borel sets $B \subset \Rd$. Recall that the support of a probability measure is the set of all points any open neighbourhood of which receives positive mass. Motivated by the theory of optimal transportation, we will say $\pi \in \varPi(P, Q)$ is optimal if its support, $\spt \pi$, is cyclically monotone (Definition~\ref{def:mon} below), which by Rockafellar's theorem means it is included in the subdifferential $T = \partial \psi \subset \Rdd$ of a closed convex function $\psi : \Rd \to \reals \cup \{+\infty\}$. Indeed, the Knott--Smith optimality criterion \citep[Theorem~2.12(i)]{Villani-Topics} states that, given $P$ and $Q$ in $\Prob(\Rd)$ with finite second moments, a coupling measure $\pi \in \varPi(P, Q)$ has cyclically monotone support if and only if it minimizes the expected transportation cost $\int \left|x-y\right|^2 \, \diff \pi'(x, y)$ with respect to the squared Euclidean distance over all couplings $\pi' \in \varPi(P, Q)$.

If $P$ vanishes on sets of Hausdorff dimension at most $d-1$, then the potential $\psi$ is differentiable $P$-almost everywhere and we have $\pi = (\id \times \nabla \psi)_\# \mu$ and $Q = (\nabla \psi)_\# P$, with $\id$ the identity map on $\Rd$, where $\nabla \psi$ is the gradient of $\psi$ and where $\#$ denotes the push forward of measures \citep{McCann}. If $P$ and $Q$ have finite second moments, then $\pi$ is the solution of the Kantorovich problem with respect to the squared Euclidean distance as cost function and $\nabla \psi$ is the Brenier map solving the associated Monge problem; see  \cite{Brenier} or \citet[Theorem~2.12(ii)]{Villani-Topics}. With some abuse of terminology, $T = \partial \psi$ will be called an optimal transport plan between $P$ and $Q$ even in the absence of second moments and when $\psi$ is not differentiable $P$-almost everywhere.

Suppose $P$ and/or $Q$ are unknown or are approximated by a sequence of possibly random probability measures $P_n$ and $Q_n$ converging in the weak topology to $P$ and $Q$, respectively. In case $P_n$ and $Q_n$ are random, convergence may take place almost surely or in distribution. The optimal transport plan $T$ may then be approximated or estimated by a sequence of (random) optimal transport plans $T_n = \partial \psi_n$ containing the supports of coupling measures $\pi_n \in \varPi(P_n, Q_n)$. The question is in which sense $T_n$ converges to $T$.

The problem received renewed attention in nonparametric statistics as optimal transport plans provide ways to define versatile notions of ranks and quantiles for multivariate distributions, opening the door to distribution-free hypothesis tests and semiparametrically efficient estimators. In all cases, the transport occurs between a target distribution, to be estimated from data, and a reference distributions, which may be discretized to facilitate computations.
A prominent example are the center-outward distribution and quantile functions in \citet{Chernozhukov}, \citet{delbarrio+g+h:2020} and \citet{hallin2021AoS}, with applications to quantile contours \citep{beirlant+b+db+h+k:2020}, distribution-free tests of independence \citep{shi2020JASA, shi2020rateoptimality} and multiple-output regression \citep{hallin2022JASA, delBarrio2022quantile}. Here, the reference distribution is spherically uniform on the unit ball.
Quantile and rank maps via semi-discrete optimal transport between a random sample from a target distribution to an absolutely continuous reference distribution are considered in \citet{ghosal2022multivariate}. In contrast, \citet{deb2019multivariate} propose to match sample points to a quasi-uniform grid on the unit cube. Matching sample points to fixed discrete grid arising from the discretisation of a general reference distribution also underlies the two-sample test procedures in \citet{deb2021efficiency}.
 
These articles convincingly argue the usefulness of the optimal transportation perspective. At the basis of the analysis are Glivenko--Cantelli type theorems showing the uniform consistency of the estimated transport plans. However, the conditions under which these are proved involve restrictions on the densities or on the supports of the measures $P$ and $Q$. The contribution of this paper is to provide such uniform consistency results under quite general conditions.
	
\subsection{Set convergence}

The perspective of this paper is to view the subdifferentials $T_n$ and $T$ as (random) closed subsets of $\Rdd$ and employ notions of set convergence. Uniform convergence of $T_n$ (as a multivalued mapping) to $T$, local or global, is then derived leveraging maximal (cyclical) monotonicity \citep{Rockafellar-Wets, A-A}. 

For a locally compact Hausdorff second countable (LCHS) space $\E$, let $\Fell(\E)$ denote the set of all closed subsets of $\E$. For $A \subset \E$, let $\Fell_A(\E) = \{ F \in \Fell(\E) : F \cap A \ne \emptyset \}$ and $\Fell^A(\E) = \{F \in \Fell(\E) : F \cap A = \emptyset \}$ denote the collections of closed sets that hit or miss $A$, respectively. The Fell hit-and-miss topology on $\Fell(\E)$ is the one generated by the collections $\Fell_G(\E)$ and $\Fell^K(\E)$, where $G$ and $K$ vary over the open and compact sets of $\E$, respectively. The Fell topology is metrizable and turns $\Fell(\E)$ into a compact metric space. In $\Fell(\E)$, we have $F_n \to F$ as $n \to \infty$ if and only if for every open $G \subset \E$ such that $F \cap G \neq \emptyset$ we have $F_n \cap G \neq \emptyset$ for all large $n$ and for every compact $K$ such that $F \cap K = \emptyset$ we have $F_n \cap K = \emptyset$ for all large $n$. The Fell topology is a classical subject in analysis, see for instance \cite{matheron1975}, \cite{beer1993}, and \cite{Molchanov2005}. Some useful facts are collected in the supplement (Appendix~\ref{app:Fell}). Its application to the theory of weak convergence of possibly discontinuous random processes was explored for instance in \cite{bucher+s+v:2014}.

For $T \in \Fell(\Rdd)$ and non-empty $V \subset \Rd$, let $T \restr V = T \cap (V \times \Rd)$ denote the restriction of $T$ to $V$. If $V$ is open, $V \times \Rd$ is open and thus LCHS, while $T \restr V$ belongs to $\Fell(V \times \Rd)$. In the context of the central question of this paper involving weakly converging probability measures $P_n \wto P$ and $Q_n \wto Q$, the key result will be the convergence of $T_n \restr V$ to $T \restr V$ in $\Fell(V \times \Rd)$ for $V = \intr(\spt P)$, where $T_n$ and $T$ are optimal transport plans between $P_n$ and $Q_n$ and between $P$ and $Q$, respectively. Thanks to uniqueness and boundedness properties of maximal (cyclically) monotone mappings, such set convergence will imply the Hausdorff and (locally) uniform convergence of the possibly multivalued mappings $x \mapsto T_n(x) = \{ y \in \Rd : (x, y) \in T_n \}$.

\subsection{Main results}

The stage is now almost ready for the main results of the paper. Let $\intr A$, $\cl A$, and $\partial A$ denote the interior, closure, and boundary, respectively, of a subset $A$ of a topological space. A set $T \subset \Rdd$ is identified with a multivalued mapping from $\Rd$ to $\Rd$ via $T(x) = \{ y \in \Rd : (x, y) \in T \}$. The domain and range of $T \subset \Rdd$ are $\dom T = \{ x \in \Rd : T(x) \ne \emptyset \}$ and $\rge T = \{ y \in \Rd : \exists x \in \Rd, y \in T(x) \}$. We say $T$ is single-valued in $x \in \Rd$ if $T(x)$ is a singleton, in which case $T(x)$ also denotes the single element $y$ such that $(x, y) \in T$. Let $|\cdot|$ denote the Euclidean norm and $\sphere_{d-1} = \{u \in \Rd : |u| = 1 \}$ the unit sphere. A set $E \subset \Rd$ recedes in direction $u \in \sphere_{d-1}$ if there exist $x_n \in E$ such that $|x_n| \to \infty$ and $x_n/|x_n| \to u$ as $n \to \infty$. A non-empty, compact, convex set $C \subset \Rd$ is strictly convex in direction $u \in \sphere_{d-1}$ if $\argmax_{c \in C} \inpr{c, u}$ is a singleton. Let $\wto$ denote weak convergence of Borel probability measures on a metric space and let $d_H$ denote the Hausdorff semi-metric: for $A, B \subset \Rd$, we have $d_H(A, B) = \max \{ \sup_{a \in A} d(a, B), \sup_{b \in B} d(b, A) \}$, where, for $x \in \Rd$ and non-empty $Y \subset \Rd$, we have $d(x, Y) = \inf_{y \in Y} |x-y|$, while $d(x, \emptyset) = \infty$. 

\begin{theorem}
\label{thm}
Let $P_n, P, Q_n, Q \in \Prob(\Rd)$ be such that $P_n \wto P$ and $Q_n \wto Q$ as $n \to \infty$. Let $\pi_n \in \varPi(P_n, Q_n)$ and $\pi \in \varPi(P, Q)$ be such that there exist maximal cyclically monotone $T_n, T \in \Fell(\Rdd)$ containing $\spt \pi_n$ and $\spt \pi$, respectively. 
Assume $\pi$ is the only coupling measure of $P$ and $Q$ with cyclically monotone support.
Write $V = \intr(\spt P)$ and assume $V$ is non-empty.

(a) We have $\pi_n \wto \pi$ in $\Prob(\Rdd)$ as well as $V \subset \dom T$ and
 $T_n \restr V \to T \restr V$ in $\Fell(V \times \Rd)$ as $n \to \infty$. In particular, for any compact $K \subset V \times \Rd$ such that $T \cap K = \emptyset$ we have $T_n \cap K = \emptyset$ for all large $n$, and for any open $G \subset V \times \Rd$ such that $T \cap G \ne \emptyset$ we have $T_n \cap G \ne \emptyset$ for all large $n$.

(b) For any compact $K \subset V$, we have $K \subset \intr(\dom T_n)$ and thus $T_n(K)$ is compact for large $n$.
If $T$ is single-valued on $\partial K$, then
\[
	d_H\bigl(T_n(K), T(K)\bigr) \to 0, \qquad n \to \infty.
\]
If $T$ is single-valued on the whole of $K$, then actually
\[
	\sup_{x \in K} \sup_{y \in T_n(x)} |y - T(x)| \to 0, \qquad n \to \infty.
\]
In particular, if $P(W) = 1$ for some Borel set $W \subset V$ on which $T$ is single-valued, then for $P$-almost every $x \in \Rd$, the set $T_n(x)$ is non-empty for all large $n$ and $\sup_{y \in T_n(x)} |y - T(x)| \to 0$ as $n \to \infty$.

(c) Assume that $\rge T$ is bounded, that there exists an open set $U \subset \spt P$ such that $T(U) \subset \intr(\rge T)$ and $\rge T \subset \cl(T(U))$, and that $\rge T_n \subset \cl(\rge T)$ for all but finitely many $n$. Then the domains of $T$ and $T_n$ are equal to $\Rd$ for all but finitely any $n$ and $T_n \to T$ in $\Fell(\Rdd)$ as $n \to \infty$.

(d) If, in addition to the conditions in (c), $T$ is single-valued on the closure of a set $E \subset \Rd$ that recedes only in directions in which the (necessarily convex) set $\cl(\rge T)$ is strictly convex, then
\[
	\sup_{x \in E} \sup_{y \in T_n(x)} |y - T(x)| \to 0, \qquad n \to \infty.
\]
\end{theorem}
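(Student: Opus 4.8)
The plan is to argue by contradiction, reducing everything to the graphical (Kuratowski) convergence supplied by part~(c) together with the plain monotonicity inequality. Write $C = \cl(\rge T)$, which under the assumptions of (c) is non-empty (since $\dom T = \Rd$), compact (since $\rge T$ is bounded), and convex (as noted in the statement); recall that $C$ is strictly convex in a direction $u \in \sphere_{d-1}$ exactly when $\argmax_{c \in C}\inpr{c,u}$ is a singleton, which we then write as $\{y_u\}$. By part~(c) I may and do assume throughout that $\dom T_n = \dom T = \Rd$ and $\rge T_n \subseteq C$ for all $n$ under consideration, and that $T_n \to T$ in $\Fell(\Rdd)$; since on a locally compact space Fell convergence of closed sets coincides with Painlevé--Kuratowski convergence (Appendix~\ref{app:Fell}), this means $T = \liminf_n T_n = \limsup_n T_n$.

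Now suppose the conclusion fails. Then there are $\eps_0 > 0$, indices $n_k \uparrow \infty$, points $x_k \in E$, and $y_k \in T_{n_k}(x_k)$ with $|y_k - T(x_k)| \ge \eps_0$, where $T(x_k)$ denotes the single point to which $T$ maps $x_k$ (legitimate since $x_k \in E \subseteq \cl E$, on which $T$ is single-valued). Put $z_k = T(x_k)$. Both $y_k \in \rge T_{n_k} \subseteq C$ and $z_k \in \rge T \subseteq C$ lie in the compact set $C$, so after passing to a subsequence $y_k \to y_\ast$ and $z_k \to z_\ast$ in $C$; passing to a further subsequence, either $(x_k)_k$ is bounded, or $|x_k| \to \infty$ and $x_k/|x_k| \to u$ for some $u \in \sphere_{d-1}$.

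If $(x_k)_k$ is bounded, extract $x_k \to x_\ast \in \cl E$. Then $(x_k, y_k) \in T_{n_k}$ with $(x_k,y_k) \to (x_\ast, y_\ast)$, hence $(x_\ast, y_\ast) \in \limsup_n T_n = T$; likewise $(x_k, z_k) \in T$ and $T$ is closed, so $(x_\ast, z_\ast) \in T$. Since $T$ is single-valued at $x_\ast \in \cl E$, $y_\ast = z_\ast = T(x_\ast)$, whence $|y_k - z_k| \to 0$, contradicting $|y_k - z_k| \ge \eps_0$. If instead $|x_k| \to \infty$ and $x_k/|x_k| \to u$, then $u$ is a direction in which $E$ recedes, so by hypothesis $C$ is strictly convex in direction $u$ and $\argmax_{c \in C}\inpr{c,u} = \{y_u\}$. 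Fix any $(a,b) \in T$ and, using $T = \liminf_n T_n$, choose $(a_n, b_n) \in T_n$ with $(a_n,b_n) \to (a,b)$. Monotonicity of $T_{n_k}$ gives $\inpr{y_k - b_{n_k}, x_k - a_{n_k}} \ge 0$; dividing by $|x_k|$ and letting $k \to \infty$ (the terms in which a bounded quantity is divided by $|x_k|$ vanish, while $\inpr{y_k, x_k/|x_k|} \to \inpr{y_\ast,u}$ and $\inpr{b_{n_k}, x_k/|x_k|} \to \inpr{b,u}$) yields $\inpr{y_\ast, u} \ge \inpr{b,u}$. The same manipulation applied to $(x_k, z_k) \in T$ and $(a,b) \in T$ gives $\inpr{z_\ast, u} \ge \inpr{b,u}$. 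Since $(a,b) \in T$ was arbitrary and $C = \cl(\rge T)$, we get $\inpr{y_\ast, u} \ge \max_{c\in C}\inpr{c,u} \ge \inpr{y_\ast,u}$ and likewise for $z_\ast$, so $y_\ast, z_\ast \in \argmax_{c\in C}\inpr{c,u} = \{y_u\}$, i.e. $y_\ast = z_\ast = y_u$ and again $|y_k - z_k| \to 0$, a contradiction. This proves the claim.

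The main obstacle is the unbounded part of $E$: graphical convergence on $\Rdd$ by itself says nothing about the behaviour of $T_n$ "at infinity," and a naive attempt to compare $T_n(x)$ with $T(x)$ through the convex potentials runs into the difficulty that the conjugates of the (normalised) potentials $\psi_n$ need not be equi-bounded near the relevant boundary of $C$. The point of the argument above is that one need not compare $T_n(x)$ and $T(x)$ directly in the tail at all: the monotonicity inequality, evaluated against a fixed anchor point of $T$ and pushed to the limit along the receding direction, forces \emph{any} subsequential limit of $y_k \in T_{n_k}(x_k)$, and of $z_k = T(x_k)$, to lie in the exposed face of $C$ in direction $u$, and the strict-convexity hypothesis collapses that face to a single point shared by both.
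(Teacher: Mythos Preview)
Your argument is correct. The core mechanism is the same as the paper's: in receding directions $u$ of $E$, the monotonicity inequality forces any value $y \in T'(x)$ with $|x|$ large and $x/|x|$ near $u$ to (nearly) maximise $\inpr{\cdot,u}$ over $C=\cl(\rge T)$, and strict convexity of $C$ in direction $u$ then pins this down to a single point, the same one for $T$ and for $T_n$.

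Where you differ from the paper is in packaging. You run a direct contradiction along a bad subsequence $(x_k,y_k)$, compare against a single anchor $(a,b)\in T$ at a time (pulled back to $(a_{n_k},b_{n_k})\in T_{n_k}$ via $T=\liminf_n T_n$), divide by $|x_k|$, and let $k\to\infty$; the bounded case is handled by $\limsup_n T_n=T$ and single-valuedness on $\cl E$. The paper instead proves a quantitative neighbourhood version (Theorem~\ref{thm:UCT}): it covers $C$ by finitely many small balls centred at points of $\rge T$, builds from this an explicit Fell neighbourhood $\mathcal{G}$ of $T$, and shows that every $T'\in\mathcal{G}\cap\Fellmm$ with $\rge T'\subset C$ already satisfies the uniform bound. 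This neighbourhood formulation is what is reused in the proof of Theorem~\ref{thm:random}(d) (weak convergence case) via the Portmanteau theorem; your subsequence argument proves Theorem~\ref{thm}(d) cleanly but would need to be reworked to yield that stronger statement. On the other hand, your route avoids the auxiliary Lemma~\ref{lem:support} on support functions and the explicit cover, making it a bit more elementary for the deterministic result at hand.
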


If, for instance, $\cl(\rge T)$ in (d) is equal to a ball, then it is strictly convex in any direction, and so $E$ can potentially $\Rd$ itself. If, however, $E$ is the unit cube $[0, 1]^d$, then $E$ can recede only in directions $u = (u_1,\ldots,u_d) \in \sphere_{d-1}$ such that $u_j \ne 0$ for all $j \in \{1,\ldots,d\}$.

In comparison to Proposition~1.7.11 in \cite{panaretos+z:2020}, no finite second moments are needed in Theorem~\ref{thm} and the uniform convergence can take place potentially on unbounded sets. Part~(a) states that in any case, convergence takes place, albeit in a weaker topology than the one of uniform convergence, namely the Fell topology on a certain base space. Still, this may already provide some useful information about the asymptotic behaviour of $T_n$. Choosing for instance $G = V \times \{ y \in \Rd : |y| > \lambda \}$ in Theorem~\ref{thm}(a) for some $\lambda > 0$ implies that if $\sup_{x \in V} \sup_{y \in T(x)} |y| > \lambda$, then also $\sup_{x \in V} \sup_{y \in T_n(x)} > \lambda$ for all large $n$, and thus that $\liminf_{n \to \infty} \sup_{x \in V} \sup_{y \in T_n(x)} |y| \ge \sup_{x \in V} \sup_{y \in T(x)} |y|$. Lemma~\ref{lem:TmmKdeltaB} below, which is a step in the proof of Theorem~\ref{thm}, states that if $K$ is a compact subset of $V$, then for small $\delta > 0$ and large $n$, the sets $T_n(\{x \in \Rd : d(x, K) \le \delta\})$ and $T(K)$ are close in the Hausdorff metric, and this without any further conditions on $T$.

An interesting question is whether Theorem~\ref{thm} can be generalized to more general spaces and transportation costs. For the Fell topology to have convenient properties, the base space needs to be locally compact, second countable, and Hausdorff. Important in the proof are certain properties of maximal monotone functions in Euclidean space reviewed in \cite{A-A}. The proof of Theorem~\ref{thm:UCT}, which underlies part~(d) in Theorem~\ref{thm}, relies on the bilinearity of the inner product on $\Rd$. All in all, the extension to more general settings is a challenging topic for further research. 

In Theorem~\ref{thm}, the approximating measures $P_n$ and $Q_n$ are not yet random. 
The extension to random probability measures goes through as expected, although some measurability issues require special attention. Indeed, even if there is an event of probability one on which $P_n$ converges weakly to $P$ (for instance, by the law of large numbers in case of the empirical distribution), then it is not enough to conclude that on this event, the suprema in (b) and (d) of Theorem~\ref{thm} tend to zero too; indeed, Counterexample~1.9.4 in \cite{vdVW96} involves a sequence $X_n$ of real-valued maps on a probability space $\Omega$ such that $\lim_{n\to\infty} X_n(\omega) = 0$ for \emph{every} $\omega \in \Omega$ while still $X_n^* = 1$ for all $n$, with $X_n^*$ the measurable cover of $X_n$, so that $X_n$ does not converge to $0$ in outer probability. Recall that an $F_\sigma$-set is a countable intersection of closed sets. 

\begin{theorem}
\label{thm:random}
In Theorem~\ref{thm}, if instead $(P_n, Q_n, \pi_n, T_n)$ are random elements in $\Prob(\Rd) \times \Prob(\Rd) \times \Prob(\Rdd) \times \Fell(\Rdd)$ such that $\pi_n \in \varPi(P_n, Q_n)$ and $\spt \pi_n \subset T_n$ with probability one and if for some non-random $P, Q \in \Prob(\Rd)$ we have $P_n \wto P$ and $Q_n \wto Q$ in $\Prob(\Rd)$ almost surely or in distribution, then the stated convergence relations in (a)--(d) hold true almost surely or in distribution too, respectively. In (c), it is assumed that $\rge T_n \subset \cl(\rge T)$ with probability one, while in (d), it is assumed that $E$ is an $F_\sigma$-set, so that the double supremum is a random variable.
\end{theorem}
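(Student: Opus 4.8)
The plan is to reduce Theorem~\ref{thm:random} to the deterministic Theorem~\ref{thm}, handling the two convergence modes separately and paying the price of a measurability argument. First I would treat the \emph{almost sure} case. Let $\Omega_0$ be the intersection of the probability-one events $\{P_n \wto P\}$ and $\{Q_n \wto Q\}$ with the countable intersections over $n$ of the probability-one events $\{\pi_n \in \varPi(P_n,Q_n)\}$, $\{\spt \pi_n \subset T_n\}$ and $\{T_n \text{ maximal cyclically monotone}\}$; then $\prob(\Omega_0)=1$. For each fixed $\omega \in \Omega_0$, the deterministic sequences $(P_n(\omega), Q_n(\omega), \pi_n(\omega), T_n(\omega))$ satisfy all the hypotheses of Theorem~\ref{thm}, the remaining hypotheses of that theorem (uniqueness of $\pi$, non-emptiness of $V = \intr(\spt P)$, the structural conditions of parts~(c) and~(d)) not involving $n$ and being unchanged; hence conclusions (a)--(d) hold for every $\omega\in\Omega_0$. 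This is precisely almost sure convergence, provided the objects on the left are genuine random elements or random variables.

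Establishing that measurability is the step I expect to require the most care, and it is presumably why the statement singles out the hypothesis that $E$ be an $F_\sigma$-set. The spaces $\Prob(\Rdd)$, $\Fell(V\times\Rd)$ and $\Fell(\Rdd)$ are Polish; since $T_n$ is a random closed set in $\Rdd$ and $V\times\Rd$ is open, its restriction $T_n\restr V$ is a random closed set in the LCHS space $V\times\Rd$, as one checks on the Fell sub-basis (for $G$ open and $K$ compact in $V\times\Rd$ the events $\{T_n\cap G\ne\emptyset\}$ and $\{T_n\cap K=\emptyset\}$ are measurable, cf.\ Appendix~\ref{app:Fell}), so the Fell convergences in (a) and (c) and the weak convergence $\pi_n\wto\pi$ are convergences of random elements of Polish spaces. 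For the suprema in (b) and (d) I would use that a maximal monotone map is continuous at any point of the interior of its domain at which it is single-valued \citep{A-A}: on a compact $K\subset V\subset\intr(\dom T)$ on which $T$ is single-valued, $T\restr K$ is continuous, so $g(x,y):=|y-T(x)|$ is continuous on the closed set $K\times\Rd$, and since $T_n\cap(K\times\Rd)$ is a random closed set contained in $K\times\Rd$, the quantity $\sup_{x\in K}\sup_{y\in T_n(x)}|y-T(x)|=\sup\{g(x,y):(x,y)\in T_n\cap(K\times\Rd)\}$ is measurable because $\{\,\sup>t\,\}=\{T_n\cap(K\times\Rd)\cap g^{-1}((t,\infty))\ne\emptyset\}$ with $g^{-1}((t,\infty))$ relatively open in $K\times\Rd$; similarly $d_H(T_n(K),T(K))$ in (b) is measurable, $T_n\cap(K\times\Rd)$ being eventually a random compact set on $\Omega_0$. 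When $E$ is an $F_\sigma$-set it is $\sigma$-compact (a closed subset of the $\sigma$-compact space $\Rd$ is $\sigma$-compact, and a countable union of $\sigma$-compacts is $\sigma$-compact), say $E=\bigcup_m K_m$ with $K_m$ compact; since $T\restr{\cl E}$ is single-valued hence continuous on each $K_m$, the double supremum over $E$ is a countable supremum of the measurable quantities above, hence a random variable. This is exactly where the $F_\sigma$ assumption is needed.

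For the \emph{in-distribution} case I would argue by subsequences. Because $P$ and $Q$ are non-random, so are $\pi$ and $T$, and every limit in (a)--(d) is a constant; hence it suffices to prove that each object on the left converges to its limit in probability, for which, by the standard subsequence criterion for convergence in probability in a metric space \citep{vdVW96}, it is enough to show that every subsequence admits a further subsequence along which the convergence holds almost surely. Given a subsequence, convergence in distribution of $P_n$ to the constant $P$ is equivalent to convergence in probability, so along a further subsequence $P_n\to P$ almost surely in $\Prob(\Rd)$, and along yet a further subsequence also $Q_n\to Q$ almost surely; on that subsequence the almost sure case established above applies verbatim --- it used only the probability-one structural hypotheses on $(\pi_n,T_n)$, which hold for every $n$ --- and delivers the conclusions of (a)--(d) almost surely. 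This yields convergence in probability, hence in distribution, along the full sequence.

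The only genuine obstacle is thus the measurability bookkeeping --- in particular checking that the restriction of a random closed set to an open set is again a random closed set, and that the double suprema in (b) and (d) are Borel under the stated $F_\sigma$/$\sigma$-compactness hypotheses; everything else is a pointwise application of Theorem~\ref{thm} on a probability-one event together with the routine reduction of convergence in distribution to a deterministic limit to convergence in probability and, along subsequences, to almost sure convergence.
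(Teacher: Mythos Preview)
Your almost-sure case is essentially the paper's: restrict to a probability-one event on which all the deterministic hypotheses of Theorem~\ref{thm} hold, apply that theorem pointwise, and then check that the suprema and Hausdorff distances are Borel. The paper packages the measurability verification into Lemmas~\ref{lem:dHmeas} and~\ref{lem:measdH}, but your sketch (continuity of $T$ where single-valued, $F_\sigma\Rightarrow\sigma$-compact, and the Fell hit-set characterisation of superlevel sets) is the same argument.

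Your in-distribution case, however, takes a genuinely different route. The paper does \emph{not} reduce to the almost-sure case via subsequences; instead it argues directly with the Portmanteau theorem, exploiting the ``open-neighbourhood'' formulations of Lemma~\ref{lem:TUB}, Propositions~\ref{prop:TmmKsingval}, \ref{prop:graphHausdorff}, \ref{prop:partialcomplete} and Theorem~\ref{thm:UCT}: each of these produces an open Fell neighbourhood $\mathcal{G}$ of $T$ (or of $T\restr V$) such that membership of $T'$ in $\mathcal{G}\cap\Fellmm$ already forces the desired metric bound, and then $\prob(T_n\in\mathcal{G})\to 1$ by Portmanteau. For part~(a) the paper first establishes tightness of $(P_n,Q_n,\pi_n)$ via Lemma~\ref{lem:Picmclosed}(c) and then invokes Proposition~\ref{prop:graphconvrelVweak} to identify subsequential weak limits of $T_n$. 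Your approach---convergence in distribution to a constant is convergence in probability, hence along a sub-subsequence almost sure, hence Theorem~\ref{thm} applies pathwise---is correct and more economical: it shows that the neighbourhood formulations of Section~\ref{sec:convmon}, which the paper developed partly to enable the Portmanteau step, are not strictly needed for Theorem~\ref{thm:random}. The paper's route, on the other hand, is more self-contained (it does not rely on the metric-space subsequence criterion for convergence in probability) and makes transparent exactly which topological feature of each deterministic lemma is being used in the random setting.
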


Coming back to the applications of optimal transport in statistics in Section~\ref{subsec:motiv}, Theorem~\ref{thm:random} applies to multivariate rank maps when $P_n$ is the empirical distribution of a random sample drawn from $P$ and $Q_n$ is a discretization of a reference distribution $Q$ or just $Q$ itself. Choosing $Q$ to be the spherically uniform distribution on the unit ball yields the center-outward distribution function \cite{hallin2021AoS}, whereas \cite{deb2019multivariate} opt for the uniform distribution on the unit cube. Switching the roles of $P$ and $Q$ yields notions of multivariate quantiles built from transporting a reference distribution $P$ (or a discretization thereof) to a random sample drawn from $Q$. In all these cases, Theorem~\ref{thm:random} provides general conditions under which the estimated maps converge at least graphically (i.e., as random sets in the Fell topology), almost surely or in distribution, and, given additional conditions, also uniformly, locally or sometimes even globally.


At the heart of the results lies the identification of a possibly multivalued mapping with a closed set. Graphical convergence of such mappings is then defined as convergence in the Fell topology on the space of closed subsets of the Cartesian product, a topic explored in Section~\ref{sec:mmm}. In case the mappings are maximal (cyclically) monotone and under side conditions, graphical convergence turns out imply to convergence in the Hausdorff metric and the supremum distance (Section~\ref{sec:convmon}). In the context of optimal transport, the mappings of interest contain the supports of couplings between two given probability measures. Some auxiliary results about these supports are given in Section~\ref{sec:coupling}. The proofs of Theorems~\ref{thm} and~\ref{thm:random} are provided in Section~\ref{sec:proofs}, followed by the proofs of the results in  Sections~\ref{sec:mmm}, \ref{sec:convmon} and~\ref{sec:coupling} in Appendices~\ref{app:mmm}, \ref{app:convmon} and~\ref{app:coupling}, respectively. The Supplement (Appendix~\ref{app:Fell}) provides a general exposition about the Fell topology.

\begin{notation}
The open and closed unit balls in $\Rd$ are $\oball = \{ x \in \Rd : |x| < 1 \}$ and $\ball = \{ x \in \Rd : |x| \le 1\}$, respectively. The Minkowski sum of $A, B \in \Rd$ is $A + B = \{ a + b : a \in A, b \in B\}$, while for $A \subset \Rd$ and $\lambda \in \reals$, we put $\lambda A = \{ \lambda a : a \in A \}$.
For $A \subset \Rd$, we thus have $A + \eps \ball = \{ a + x : a \in A, |x| \le \eps \}$ and $A + \eps \oball = \{ a + x : a \in A, |x| < \eps \}$. With slight abuse of notation, the open and closed balls of radius $\rho > 0$ centred at $x \in \Rd$ are $x + \rho \oball$ and $x + \rho \ball$, respectively.
\end{notation}

\section{Graphical convergence of multivalued mappings}
\label{sec:mmm}

To apply notions of set convergence to the study of sequences of functions between Euclidean spaces, we identify a subset $T$ of $\Rkl \equiv \reals^{k+\ell}$ with the multivalued mapping
\[ 
\Rk \toto \Rl : x \mapsto T(x) := \{ y \in \Rl : (x, y) \in T \}, 
\]
of which the set $T$ is the graph in the sense of \citet[p.~148]{Rockafellar-Wets}. The domain and the range of $T$ are $\dom T = \{ x \in \Rk : T(x) \ne \emptyset \}$ and $\rge T = \bigcup_{x \in \Rk} T(x)$ respectively. The inverse of $T$ is simply $T^{-1} = \{ (y, x) \in \Rl \times \Rk : (x, y) \in T \}$, i.e., $x \in T^{-1}(y)$ if and only if $y \in T(x)$.

By identifying possibly multivalued mappings with their graphs, we can apply notions of set convergence to study convergence of such mappings.
The inner and outer limits of a sequence $(A_n)_n$ of subsets of $\reals^m$ are defined as
\begin{multline*}
	\liminf_{n \to \infty} A_n 
	= \{ x \in \reals^m : \text{there exist $x_n \in A_n$ such that $x_n \to x$ as $n \to \infty$}\} \\
	= \{ x \in \reals^m : \text{for every open $G$ that includes $x$, we have $A_n \cap G \ne \emptyset$ for all but finitely many $n$}\}
\end{multline*}
and
\begin{align*}
	\limsup_{n \to \infty} A_n &= \{ x \in \reals^m : \text{there exists infinite $N \subset \NN$ and $x_n \in A_n$ for $n \in N$ such that $x_n \to x$ as $n \to \infty$}\} \\
	&= \{ x \in \reals^m : \text{for every open $G$ that includes $x$, we have $A_n \cap B \ne \emptyset$ for infinitely many $n$}\}.
\end{align*}
Clearly, the inner limit is contained in the outer limit.
The sequence $A_n \subset \reals^m$ converges in the Painlevé--Kuratowski sense to $A \subset \reals^m$ if $A = \liminf_{n \to \infty} A_n = \limsup_{n \to \infty} A_n$. The inner and outer limits are necessarily closed. If the sets $A_n$ are closed as well, this notion of set convergence is equivalent to convergence in the Fell topology; see for instance \citet[Theorem~4.5]{Rockafellar-Wets}.
Appendix~\ref{app:Fell} provides background and auxiliary results on the Fell topology. 

\subsection{Closed multivalued mappings between Euclidean spaces}
\label{subsect:clmultiEucl}

If $T \subset \reals^k \times \reals^\ell$ is closed, i.e., if $T \in \Fell(\Rkl)$, the corresponding multivalued mapping $T : \reals^k \toto \reals^\ell$ is outer semicontinuous \citep[Theorem~5.7]{Rockafellar-Wets}, which is a property of the values of the map in the neighbourhood of a given point. Here, we extend this property to neighbourhoods of general sets. Let $\ball_m$ and $\oball_m$ denote the closed and open unit balls in $\reals^m$, respectively.

\begin{lemma}[Outer semicontinuity of closed mappings]
	\label{lem:oscK}
	Let $T \subset \reals^k \times \reals^\ell$ be closed and let $A \subset \reals^k$ be non-empty. For every $\eps > 0$ and every $\rho > 0$, there exists an open $U \subset \reals^k$ such that $A \subset U$ and
	\[
	T(U) \cap \rho \ball_\ell
	\subset
	T(A) + \eps \ball_\ell.
	\]
\end{lemma}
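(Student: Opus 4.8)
The plan is to argue by contradiction, exploiting the closedness of $T$ together with a compactness argument on the ball $\rho\ball_\ell$. Suppose the conclusion fails for some fixed $\eps>0$ and $\rho>0$. Taking the shrinking open neighbourhoods $U_n = A + (1/n)\oball_k$ of $A$, the failure means that for every $n$ there exist $x_n \in U_n$ and $y_n \in T(x_n) \cap \rho\ball_\ell$ with $y_n \notin T(A) + \eps\ball_\ell$, i.e. $d(y_n, T(A)) > \eps$. From $x_n \in U_n$ we may pick $a_n \in A$ with $|x_n - a_n| < 1/n$.

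Next I would extract convergent subsequences. Since $(y_n)_n$ lies in the compact set $\rho\ball_\ell$, along a subsequence $y_n \to y_* \in \rho\ball_\ell$. The points $x_n$ need not lie in a bounded set a priori, but this is where the neighbourhood structure helps: one splits into two cases. If $(a_n)_n$ (equivalently $(x_n)_n$, since $|x_n-a_n|\to 0$) has a bounded subsequence, pass to a further subsequence so that $a_n \to a_* \in A$ (using only that $a_n \in A \subset \overline{A}$; note $a_*$ need not be in $A$, so one should instead work with $\overline{A}$ here and handle the discrepancy — see below) and hence $x_n \to a_*$ as well. Then $(x_n, y_n) \to (a_*, y_*)$ with $(x_n, y_n) \in T$, so closedness of $T$ gives $(a_*, y_*) \in T$, i.e. $y_* \in T(a_*)$. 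The remaining difficulty is that $a_* \in \overline{A}$ rather than $A$; to close this gap cleanly I would instead formulate and prove the lemma's contradiction hypothesis so that it produces $y_n$ at distance $>\eps$ from $T(A) + \tfrac{\eps}{2}\ball_\ell$ isn't enough either. The honest fix: replace the neighbourhood radii $1/n$ by $\eps_n\downarrow 0$ chosen small enough that $U_n \subset A + \eps_n \oball_k$ and argue that any limit point $a_*$ of the $x_n$ lies in $\overline A$; then use outer semicontinuity of $T$ at $a_*$ to get $y_* \in T(a_*)$, but to conclude $y_* \in T(A) + \eps\ball$ one needs $a_* \in A$. This forces the cleaner route: prove the statement first for $A$ replaced by a point, then note that $T(\{a\}) = T(a) \subset T(A)$ for $a \in A$, and patch via a finite subcover.

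The genuinely efficient argument, and the one I expect to present, uses a covering reduction. For each $a \in A$, outer semicontinuity of $T$ at $a$ (Rockafellar--Wets, Theorem~5.7, i.e. closedness of the graph) yields an open ball $B_a = a + r_a \oball_k$ with $T(B_a) \cap \rho\ball_\ell \subset T(a) + \eps\oball_\ell \subset T(A) + \eps\ball_\ell$; here one must check that outer semicontinuity, which controls $T(x)$ for $x$ near $a$ only through sequences, does give such a uniform ball once we intersect with the compact set $\rho\ball_\ell$ — and it does, again by a subsequence-extraction contradiction localized at the single point $a$, which is the clean compact case. Then $U := \bigcup_{a \in A} B_a$ is open, contains $A$, and satisfies $T(U) \cap \rho\ball_\ell = \bigcup_{a\in A}\bigl(T(B_a)\cap\rho\ball_\ell\bigr) \subset T(A) + \eps\ball_\ell$, as required.

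The main obstacle is precisely the passage from pointwise outer semicontinuity to a uniform-over-a-ball statement when intersecting with $\rho\ball_\ell$: one cannot naively invoke osc at $a$ because $T(a)$ itself may be noncompact, so the inclusion $T(x)\cap\rho\ball_\ell\subset T(a)+\eps\ball_\ell$ must be proved directly by contradiction at the single point $a$, extracting $y_n\to y_*\in\rho\ball_\ell$ from points $x_n\to a$ with $y_n\in T(x_n)$, whence $(a,y_*)\in T$ by closedness and $y_*\in T(a)$, contradicting $d(y_n, T(a))>\eps$ for large $n$. Once this local claim is in hand, the union over $a \in A$ requires no compactness of $A$ and gives the result immediately.
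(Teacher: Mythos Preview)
Your final argument---for each $a\in A$ find $r_a>0$ with $T(a+r_a\oball_k)\cap\rho\ball_\ell\subset T(a)+\eps\ball_\ell$ by a sequential-compactness contradiction, then set $U=\bigcup_{a\in A}(a+r_a\oball_k)$---is correct and is exactly the structure of the paper's proof; the only cosmetic difference is that the paper obtains each local radius via a finite open cover of the compact set $M=\rho\ball_\ell\setminus[T(A)+\eps\oball_\ell]$ rather than by extracting a convergent subsequence. Your initial attempt with $U_n=A+(1/n)\oball_k$ was rightly abandoned (it would need $A$ closed), and the write-up would be cleaner if it went straight to the pointwise construction.
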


For $T \subset \reals^k \times \reals^\ell$ and non-empty $V \subset \reals^k$, let $T \restr V = T \cap (V \times \reals^\ell)$ denote the restriction of $T$ to $V$, i.e., $(T \restr V)(x) = T(x)$ if $x \in V$ and $(T \restr V)(x) = \emptyset$ if $x \in \Rk \setminus V$. If $T \in \Fell(\Rkl)$, we view $T \restr V$ as an element of the Fell space $\Fell(V \times \Rl)$, the space of all closed subsets of $V \times \Rl$ (closed in the trace topology) equipped with the Fell hit-and-miss topology. The following lemma describes Fell neighbourhoods of $T \restr V$ in $\Fell(V \times \Rl)$.

\begin{lemma}[Fell neighbourhoods of mappings]
	\label{cor:Felluniform}
	Let $T \in \Fell(\Rkl)$ and let $V \subset \Rk$ be open and non-empty. For all $\eps, \rho > 0$ and compact $K \subset V$, there exists an open neighbourhood $\mathcal{G}$ of $T \restr V$ in $\Fell(V \times \Rl)$ such that for any $T_1, T_2 \in \Fell(\Rkl)$ with $T_j \restr V \in \mathcal{G}$ for $j = 1, 2$, we have
	\[
		\forall A \subset K, \qquad
		T_1(A) \cap \rho \ball_\ell 
		\subset T_2(A + \eps \ball_k) + \eps \ball_\ell.
	\]
\end{lemma}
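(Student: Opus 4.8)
The plan is to construct the neighbourhood $\mathcal{G}$ explicitly, as a finite intersection of sub-basic Fell-open sets around $T \restr V$: one ``miss'' set $\Fell^{K_0}(V \times \Rl)$ and finitely many ``hit'' sets $\Fell_{G_j}(V \times \Rl)$. The miss set is chosen so that $T_1 \restr V \in \mathcal{G}$ prevents any point of $T_1$ lying over $K$ and inside $\rho\ball_\ell$ from being far from $T$; the hit sets are chosen so that $T_2 \restr V \in \mathcal{G}$ forces $T_2$ to come close to every point of $T$ lying slightly over $K$ and inside a slightly enlarged ball. The conclusion then follows by chaining these two facts through a point of $T$.

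I would start with a harmless reduction: it suffices to prove the lemma when $\eps$ is small enough that $K + \eps\ball_k \subseteq V$ (possible since $K$ is compact and $V$ open, assuming $K \ne \emptyset$, the other case being vacuous), because the conclusion for a smaller $\eps$, with the \emph{same} $\mathcal{G}$, implies it for any larger one by monotonicity of $A \mapsto A + r\ball_k$, of $T_2$ in its argument, and of the outer enlargement. Fix $\eta := \eps/4$ and set
\[
	S := \{ z \in K \times \rho\ball_\ell : d(z, T) \ge \eta \},
	\qquad
	T^* := T \cap \bigl( (K + \eta\ball_k) \times (\rho + \eta)\ball_\ell \bigr),
\]
both compact subsets of $V \times \Rl$. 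Covering $S$ by finitely many open $(\eta/2)$-balls centred in $S$, let $K_0$ be the union of the corresponding closed balls: a compact subset of $V \times \Rl$ containing $S$ and disjoint from $T$ (each centre is at distance $\ge \eta$ from $T$), hence disjoint from $T \restr V$. Covering $T^*$ by finitely many open $\eta$-balls $G_1,\ldots,G_L$ centred in $T^*$, each $G_j$ is open in $V \times \Rl$ and meets $T \restr V$. Take $\mathcal{G} := \Fell^{K_0}(V \times \Rl) \cap \bigcap_{j=1}^{L} \Fell_{G_j}(V \times \Rl)$, manifestly an open neighbourhood of $T \restr V$.

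For the verification, take $T_1, T_2 \in \Fell(\Rkl)$ with $T_j \restr V \in \mathcal{G}$, a set $A \subseteq K$, and $y \in T_1(A) \cap \rho\ball_\ell$ witnessed by $(x,y) \in T_1$ with $x \in A$. Since $T_1 \restr V$ misses $K_0 \supseteq S$ while $(x,y) \in K \times \rho\ball_\ell$, we get $(x,y) \notin S$, hence $d((x,y),T) < \eta$; a point $(x_0,y_0) \in T$ within $\eta$ of $(x,y)$ then lies in $T^*$, hence within $\eta$ of some centre $q_j$ of a ball $G_j$; as $T_2 \restr V$ hits $G_j$ there is $(x',y') \in T_2$ within $\eta$ of $q_j$; three triangle steps give $|(x',y') - (x,y)| < 3\eta < \eps$, so $x' \in A + \eps\ball_k$, $y' \in T_2(x') \subseteq T_2(A + \eps\ball_k)$, and $|y - y'| < \eps$, i.e.\ $y \in T_2(A + \eps\ball_k) + \eps\ball_\ell$, which is the desired inclusion.

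The main obstacle is organisational rather than conceptual: one must keep every auxiliary compact and open set inside $V \times \Rl$ (the reason for shrinking $\eps$ at the outset) so that the Fell machinery on $\Fell(V \times \Rl)$ is the relevant one, assign the ``miss''/``hit'' roles correctly — $T_1$ is controlled \emph{from above}, its points unable to escape a neighbourhood of $T$, while $T_2$ is forced \emph{from below} to approach $T$ — and keep track of the constants so that $3\eta < \eps$; the degenerate cases $S = \emptyset$ or $T^* = \emptyset$ are harmless, as then $K_0$ or the family $\{G_j\}$ is empty and the corresponding constraint is vacuous. A less hands-on alternative proves the contrapositive by contradiction, using metrizability of $\Fell(V \times \Rl)$ to pass to sequences $T_i^{(n)} \restr V \to T \restr V$ (with $A$ reduced to a singleton $\{x^{(n)}\} \subseteq K$), compactness of $K \times \rho\ball_\ell$ to extract a limit point of $(x^{(n)}, y^{(n)})$, and the Painlev\'e--Kuratowski characterisation of Fell convergence ($\limsup$ and $\liminf$ of the restricted graphs both equal $T \restr V$) to place that limit in $T$ and then produce the required nearby point of $T_2^{(n)}$.
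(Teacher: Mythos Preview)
Your proof is correct. The paper's argument is shorter because it factors the construction through an auxiliary lemma (Lemma~\ref{lem:Fellnbd}): for any $F \in \Fell(\E)$, compact $K \subset \E$ and $\eps > 0$, there is a Fell neighbourhood $\mathcal{G}$ of $F$ such that $F_1 \cap K \subset \{x : d(x, F_2) < \eps\}$ whenever $F_1, F_2 \in \mathcal{G}$. Applying this with $\E = V \times \Rl$ and the compact set $K \times \rho\ball_\ell$ gives immediately $T_1 \cap (K \times \rho\ball_\ell) \subset T_2 + \eps(\ball_k \times \ball_\ell)$, from which the conclusion is read off in two lines. Your explicit hit/miss construction---the compact $K_0$ enclosing the part of $K \times \rho\ball_\ell$ far from $T$, and the balls $G_j$ covering $T^*$---is essentially the proof of that auxiliary lemma carried out in situ, with the small twist that you chain through an actual point $(x_0,y_0) \in T$ rather than a ball centre near $T$. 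The paper's factored version has the advantage of reusability (the same Lemma~\ref{lem:Fellnbd} is implicitly behind several later arguments), while yours is self-contained and makes the constants more transparent; substantively the two arguments are the same.
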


\subsection{Graphical convergence relative to an open set}

A sequence of multivalued mappings $T_n : \Rk \toto \Rl$ converges graphically to a multivalued mapping $T$ if, as subsets of $\Rkl$, they converge in the Painlevé--Kuratowski sense. The graphical inner and outer limits of $(T_n)_n$ are the multivalued mappings $\underline{T}$ and $\overline{T}$ whose graphs are the inner and outer limits of those of $T_n$ in the sense above \citep[Definition~5.32]{Rockafellar-Wets}.
If the graphs of $T_n$ are closed in $\Rkl$, graphical convergence is in turn equivalent to convergence in Fell space $\Fell(\Rkl)$.

Proposition~5.33 in \citet{Rockafellar-Wets} states graphical limit formulas at a point, and from there, graphical convergence relative to a set $X \subset \Rk$ is defined on p.~168 of the same reference.
If the set $X$ is open, this yields the following definition.

\begin{definition}[Graphical convergence relative to an open set]
	\label{def:graphconvV}
	Let $T_n, T \in \Fell(\Rkl)$ and let $V \subset \Rk$ be open and non-empty.
	The sequence $(T_n)_n$ is said to converge graphically to $T$ relative to $V$ as $n \to \infty$ if $T(x) = \underline{T}(x) = \overline{T}(x)$ for all $x \in V$ or, equivalently, $T \restr V = \underline{T} \restr V = \overline{T} \restr V$, where $\underline{T}$ and $\overline{T}$ are the inner and outer limits of $(T_n)_n$, respectively. Notation: $T_n \Vto T$ as $n \to \infty$.
\end{definition}

Recall that an accumulation point $x$ of a sequence $(x_n)_n$ in a topological space is a limit of some subsequence $(x_n)_{n \in N}$, for $N \subset \NN$ with $|N| = \infty$. The following result identifies graphical convergence relative to an open set with Fell convergence in a certain subspace.

\begin{proposition}
	\label{prop:Fell:graphconvrelV}		
	Let $T_n, T \in \Fell(\Rkl)$ and let $V \subset \Rk$ be open and non-empty. The following statements are equivalent:
	\begin{itemize}
		\item[(i)]
		For any accumulation point $T'$ of $(T_n)_n$ in $\Fell(\Rkl)$, we have $T' \restr V = T \restr V$, i.e., $T'(x) = T(x)$ for all $x \in V$.
		\item[(ii)]
		$T_n \restr V \to T \restr V$ as $n \to \infty$ in $\Fell(V \times \Rl)$, i.e., for any compact $K \subset V \times \Rd$ such that $T \cap K = \emptyset$ we have $T_n \cap K = \emptyset$ for all large $n$ and for any open $G \subset V \times \Rd$ such that $T \cap G \ne \emptyset$ we have $T_n \cap G \ne \emptyset$ for all large $n$.
		\item[(iii)]
		$T_n \Vto T$ as $n \to \infty$ as in Definition~\ref{def:graphconvV}.
	\end{itemize}
	Any accumulation point $T'$ in (i) can serve as possible limit $T$.
\end{proposition}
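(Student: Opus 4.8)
The plan is to reduce everything to two facts: that the Fell spaces $\Fell(\Rkl)$ and $\Fell(V \times \Rl)$ are compact and metrizable (the latter because $V$, hence $V \times \Rl$, is open and therefore LCHS), and that the restriction map
\[
	r \colon \Fell(\Rkl) \to \Fell(V \times \Rl), \qquad r(F) = F \cap (V \times \Rl) = F \restr V,
\]
is continuous. Continuity of $r$ is read off the hit-and-miss subbasis: for $G$ open in $V \times \Rl$ (hence open in $\Rkl$) one has $r^{-1}(\Fell_G(V \times \Rl)) = \Fell_G(\Rkl)$ because $G \subset V \times \Rl$, and likewise $r^{-1}(\Fell^K(V \times \Rl)) = \Fell^K(\Rkl)$ for every compact $K \subset V \times \Rl$; both are Fell-open in $\Fell(\Rkl)$, so $r$ is continuous. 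It is precisely here that openness of $V$ is indispensable.

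First I would prove (i) $\Leftrightarrow$ (ii). If (ii) holds and $T'$ is an accumulation point of $(T_n)_n$, say $T_{n_j} \to T'$ in $\Fell(\Rkl)$, then $T_{n_j} \restr V = r(T_{n_j}) \to r(T') = T' \restr V$ in $\Fell(V \times \Rl)$, while also $T_{n_j} \restr V \to T \restr V$ by (ii); since $\Fell(V \times \Rl)$ is Hausdorff, $T' \restr V = T \restr V$, which is (i). Conversely, assume (i). Given any subsequence of $(T_n \restr V)_n$, compactness of $\Fell(\Rkl)$ yields a further subsequence along which $T_{n_{j_k}} \to T'$ for some $T'$; this $T'$ is an accumulation point of $(T_n)_n$, so $T' \restr V = T \restr V$ by (i), and $T_{n_{j_k}} \restr V = r(T_{n_{j_k}}) \to r(T') = T \restr V$ by continuity of $r$. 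Thus every subsequence of $(T_n \restr V)_n$ has a further subsequence converging to $T \restr V$, and since $\Fell(V \times \Rl)$ is metrizable this gives $T_n \restr V \to T \restr V$, i.e.\ (ii).

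Next I would prove (ii) $\Leftrightarrow$ (iii) by working directly with the hit-and-miss description of Fell convergence in $\Fell(V \times \Rl)$ and exploiting that $V \times \Rl$ is open. Because a compact $K$ or open $G$ contained in $V \times \Rl$ satisfies $(T \restr V) \cap K = T \cap K$ and $(T \restr V) \cap G = T \cap G$ (and the same for $T_n$), one checks that the miss conditions for $T_n \restr V \to T \restr V$ are equivalent to $\overline{T} \restr V \subseteq T \restr V$ — the nontrivial direction using local compactness to place a compact neighbourhood of a putative point of $(\overline{T} \cap (V \times \Rl)) \setminus T$ inside the open set $V \times \Rl$ — while the hit conditions are equivalent to $T \restr V \subseteq \underline{T} \restr V$, via the neighbourhood characterisation of the inner limit and the fact that every neighbourhood of a point of $V \times \Rl$ contains one inside $V \times \Rl$. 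Here $\underline{T}, \overline{T}$ are the graphical inner and outer limits of $(T_n)_n$ in $\Rkl$. Since always $\underline{T} \subseteq \overline{T}$, the combination $\overline{T} \restr V \subseteq T \restr V \subseteq \underline{T} \restr V$ forces $T \restr V = \underline{T} \restr V = \overline{T} \restr V$, which is (iii) by Definition~\ref{def:graphconvV}; conversely (iii) gives back both families of conditions. For the closing sentence: compactness of $\Fell(\Rkl)$ guarantees that $(T_n)_n$ has at least one accumulation point $T'$, and if the now equivalent conditions hold then $T' \restr V = T \restr V$ by (i), so all of (i)--(iii) remain valid with $T'$ substituted for $T$.

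I expect the main obstacle to be the step (ii) $\Leftrightarrow$ (iii), specifically making precise that computing inner and outer limits within the \emph{open} base set $V \times \Rl$ annihilates exactly the approximating sequences $x_n \in T_n \restr V$ whose $\Rkl$-limit escapes $V \times \Rl$ (because $|x_n| \to \infty$ or the limit lies in $\partial V \times \Rl$), so that these limits restrict correctly to $\underline{T} \restr V$ and $\overline{T} \restr V$. By contrast, the continuity of $r$ and the subsequence/compactness arguments used for (i) $\Leftrightarrow$ (ii) are routine once that identification is in hand.
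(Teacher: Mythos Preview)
Your proposal is correct and follows essentially the same route as the paper, which dispatches the proposition in one line by invoking Lemma~\ref{lem:tracesubseq} with $\E = \Rkl$ and $\E' = V \times \Rl$. That lemma, together with Proposition~\ref{prop:Etrace} (continuity of the restriction map) and Lemma~\ref{lem:inner-outer:sub} (inner and outer limits commute with restriction to an open subset), contains precisely the arguments you spell out: your continuity check for $r$ reproduces Proposition~\ref{prop:Etrace}; your subsequence/compactness argument for (i)\,$\Leftrightarrow$\,(ii) matches the proof of Lemma~\ref{lem:tracesubseq}; and your direct hit-and-miss verification of (ii)\,$\Leftrightarrow$\,(iii) amounts to the content of Lemma~\ref{lem:inner-outer:sub} combined with Lemmas~\ref{lem:inner} and~\ref{lem:outer}. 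The only stylistic difference is that you work everything out inline, whereas the paper packages these steps into general lemmas about LCHS subspaces in the supplement.
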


\begin{proof}
	This is a corollary to Lemma~\ref{lem:tracesubseq} applied to $\E = \Rkl$ and $\E' = V \times \Rl$.
\end{proof}

Fell space is equipped with its Borel $\sigma$-field, allowing the study of random sets and thus also random multivalued mappings with closed graphs. By Proposition~\ref{prop:Etrace}, the map $\Fell(\Rkl) \to \Fell(V \times \Rl) : T \mapsto T \restr V$ is continuous and thus Borel measurable as soon as $V \subset \reals^k$ is open. The latter property takes care of measurability issues when considering graphical convergence in distribution of random maps.

\begin{proposition}
	\label{prop:graphconvrelVweak}
	Let $(T_n)_n$ be a sequence of random elements in $\Fell(\Rkl)$, let $T \in \Fell(\Rkl)$, and let $V \subset \Rk$ be open and non-empty. The following statements are equivalent:
	\begin{itemize}
		\item[(i)] $T_n \Vto T$ in distribution as $n \to \infty$, i.e., $T_n \restr V \to T \restr V$ weakly in $\Fell(V \times \Rl)$ as $n \to \infty$.
		\item[(ii)] For any random $\hat{T}$ in $\Fell$ that can arise as the limit in distribution of some subsequence of $T_n$, we have $\hat{T} \restr V = T \restr V$ almost surely, i.e., $\prob[\forall x \in V, \, \hat{T}(x) = T(x)] = 1$.
		\item[(iii)] For every compact $K \subset V \times \Rl$ with $T \cap K = \varnothing$ we have $\prob(T_n \cap K = \varnothing) \to 1$ and for every open $G \subset V \times \Rl$ with $T \cap G \ne \varnothing$ we have $\prob(T_n \cap G \ne \varnothing) \to 1$ as $n \to \infty$.
	\end{itemize} 
\end{proposition}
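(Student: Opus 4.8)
The plan is to treat the deterministic target as the Dirac mass $\delta_{T \restr V}$ in the compact metric space $\Fell(V \times \Rl)$ and to run two comparisons: (i)$\Leftrightarrow$(iii) via the portmanteau theorem together with bookkeeping of the hit-and-miss events, and (i)$\Leftrightarrow$(ii) via a subsequence argument resting on the compactness of Fell space and the continuity of the restriction map $r\colon \Fell(\Rkl) \to \Fell(V \times \Rl)$, $r(F) = F \restr V$, from Proposition~\ref{prop:Etrace}. Two bits of routine bookkeeping are used throughout. First, since $V$ is open in $\Rk$, the set $V \times \Rl$ is open in $\Rkl$, so a subset $G \subset V \times \Rl$ that is open, resp.\ compact, in $V \times \Rl$ is also open, resp.\ compact, in $\Rkl$; consequently the events $\{T_n \cap G \ne \varnothing\} = \{T_n \restr V \in \Fell_G(V \times \Rl)\}$ and $\{T_n \cap K = \varnothing\} = \{T_n \restr V \in \Fell^K(V \times \Rl)\}$ are Borel, because $\Fell_G$ and $\Fell^K$ are open in the respective Fell space and $r$ is Borel, so $T_n \restr V$ is a random element of $\Fell(V \times \Rl)$. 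Second, the condition $T \cap G \ne \varnothing$ (resp.\ $T \cap K = \varnothing$) with $G, K \subset V \times \Rl$ is exactly $T \restr V \in \Fell_G$ (resp.\ $T \restr V \in \Fell^K$).

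For (i)$\Leftrightarrow$(iii) I would argue as follows. The sets $\Fell_G(V \times \Rl)$ ($G$ open) and $\Fell^K(V \times \Rl)$ ($K$ compact) form a subbasis for the Fell topology, so a neighbourhood basis of $T \restr V$ consists of the finite intersections $\Fell_{G_1} \cap \dots \cap \Fell_{G_m} \cap \Fell^K$ that contain it, i.e., with $T \cap G_i \ne \varnothing$ for each $i$ and $T \cap K = \varnothing$ (several miss-sets may be merged since $\Fell^{K_1} \cap \Fell^{K_2} = \Fell^{K_1 \cup K_2}$). Because the limit is $\delta_{T \restr V}$, the portmanteau theorem says that (i) holds iff $\liminf_n \prob(T_n \restr V \in U) = 1$ for every open $U$ containing $T \restr V$, equivalently for every basic neighbourhood of the above form. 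Using that $\prob(A_n \cap B_n) \to 1$ whenever $\prob(A_n) \to 1$ and $\prob(B_n) \to 1$, and conversely that each factor dominates the intersection, this is in turn equivalent to $\prob(T_n \restr V \in \Fell_G) \to 1$ for every open $G \subset V \times \Rl$ with $T \cap G \ne \varnothing$ and $\prob(T_n \restr V \in \Fell^K) \to 1$ for every compact $K \subset V \times \Rl$ with $T \cap K = \varnothing$; by the identification of events above this is precisely (iii).

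For (i)$\Rightarrow$(ii), let $\hat T$ be a distributional limit of a subsequence $(T_{n_k})_k$ in $\Fell(\Rkl)$. Continuity of $r$ and the continuous mapping theorem give $T_{n_k} \restr V \to \hat T \restr V$ weakly in $\Fell(V \times \Rl)$, while (i) gives $T_{n_k} \restr V \to T \restr V$ weakly; uniqueness of weak limits then forces $\hat T \restr V$ to have law $\delta_{T \restr V}$, i.e., $\hat T \restr V = T \restr V$ almost surely, which is the assertion $\prob[\forall x \in V, \, \hat T(x) = T(x)] = 1$. For (ii)$\Rightarrow$(i), suppose (i) failed: there would be a bounded continuous $f$ on $\Fell(V \times \Rl)$, an $\eps > 0$, and a subsequence along which $\abs{\E[f(T_{n_k} \restr V)] - f(T \restr V)} \ge \eps$. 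Since $\Fell(\Rkl)$ is a compact metric space, its space of Borel probability measures is weakly compact, so a further subsequence satisfies $T_{n_{k_j}} \to \hat T$ in distribution for some random $\hat T \in \Fell(\Rkl)$; by (ii) we get $\hat T \restr V = T \restr V$ almost surely, and continuity of $r$ yields $T_{n_{k_j}} \restr V \to T \restr V$ weakly, whence $\E[f(T_{n_{k_j}} \restr V)] \to f(T \restr V)$ — a contradiction. (This mirrors the deterministic equivalence of Proposition~\ref{prop:Fell:graphconvrelV}, now at the level of laws.)

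The only genuinely delicate point is the reduction in (i)$\Leftrightarrow$(iii): one must verify that checking the subbasic hit-and-miss neighbourhoods of $T \restr V$ suffices (they generate a neighbourhood basis, and the elementary fact about intersections of high-probability events disentangles a finite intersection into its factors), and exploit that, the limit being deterministic, the one-sided portmanteau inequality $\liminf_n \prob(T_n \restr V \in U) \ge \delta_{T \restr V}(U)$ is exactly what is needed, with no boundary/continuity-set issue arising. Everything else is the standard "(automatic) tightness plus continuous mapping plus subsequences" machinery, available here because Fell space is compact; one could alternatively deduce (i)$\Rightarrow$(ii) from Proposition~\ref{prop:Fell:graphconvrelV} through a Skorokhod representation, but the route above is self-contained.
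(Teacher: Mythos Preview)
Your proof is correct and follows essentially the same approach as the paper, which derives the result from the more general Proposition~\ref{prop:FellconvrelVweak}: the equivalence (i)$\Leftrightarrow$(iii) via the Portmanteau theorem reduced to subbasic hit-and-miss neighbourhoods of the deterministic limit, and the link with (ii) via compactness of Fell space, Prohorov, and the continuous mapping theorem applied to the restriction map. The only cosmetic difference is that you close the cycle by proving (ii)$\Rightarrow$(i) (contradicting weak convergence via a bounded continuous $f$), whereas the paper proves (ii)$\Rightarrow$(iii) (contradicting a single hit/miss probability); both routes rest on the same subsequence-extraction argument.
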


\begin{proof}
	This is a corollary to Proposition~\ref{prop:FellconvrelVweak} applied to $\E = \Rkl$ and $\E' = V \times \Rl$.
\end{proof}

\section{Convergence of monotone mappings between Euclidean spaces}
\label{sec:convmon}

\subsection{Monotone mappings}

The study of the supports of coupling measures in optimal transport involves the following classical notions of monotonicity of multivalued mappings \citep{Rockafellar-Wets, A-A}.

\begin{definition}
	\label{def:mon}
	Let $T \subset \Rdd$. The set $T$ is \emph{monotone} if, for all $(x_1,y_1), (x_2,y_2) \in T$, we have $\inpr{y_2-y_1,x_2-x_1} \ge 0$. The set $T$ is \emph{cyclically monotone} if, for any integer $n \ge 1$ and all $(x_1,y_1),\ldots,(x_n,y_n) \in T$, we have $\sum_{i=1}^n \inpr{x_i,y_i} \ge \sum_{i=1}^n \inpr{x_i,y_{i+1}}$, with $y_{n+1} := y_n$. The set $T$ is \emph{maximal (cyclically) monotone} if it is not contained in a larger subset of $\Rdd$ that is also (cyclically) monotone.
\end{definition}

Since the closure of a (cyclically) monotone set is (cyclically) monotone, a maximal (cyclically) monotone set is necessarily closed. Define the following collections:
\begin{align*}
\Fellm(\Rdd)
&=
\{ T \in \Fell(\Rdd) : \text{$T$ is monotone} \}, \\
\Fellcm(\Rd \times \Rd)
&=
\{ T \in \Fell(\Rdd) : \text{$T$ is cyclically monotone} \}, \\
\Fellmm(\Rd \times \Rd)
&=
\{ T \in \Fell(\Rdd) : \text{$T$ is maximal monotone} \}, \\
\Fellmcm(\Rd \times \Rd)
&=
\{ T \in \Fell(\Rdd) : \text{$T$ is maximal cyclically monotone} \}.
\end{align*}
For simplicity, we write $\Fellm = \Fellm(\Rd \times \Rd)$ and so on. Recall the notation $\Fell_A = \{ F \in \Fell : F \cap A \ne \emptyset \}$ and $\Fell^A = \{ F \in \Fell : F \cap A = \emptyset \}$. The proofs for the results in this section are given in Appendix~\ref{app:convmon}.

%


\begin{lemma}
	\label{lem:m:closed}
	The sets $\Fellm$, $\Fellcm$, $\Fellmm \cup \{\varnothing\}$ and $\Fellmcm \cup \{\varnothing\}$ are all closed in $\Fell(\Rd \times \Rd)$. The complements of $\Fellm$ and $\Fellcm$ are unions of finite intersections of collections $\Fell_G$ for open $G \subset \Rd$.
\end{lemma}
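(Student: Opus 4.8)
The plan is to establish the two assertions of Lemma~\ref{lem:m:closed} separately, starting with the closedness statements, since the claim about the complements of $\Fellm$ and $\Fellcm$ will essentially fall out of the same hit-set description.

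First I would treat $\Fellm$. The idea is that monotonicity is a closed condition in the Fell topology because it is determined by finitely many points at a time. For fixed $(x_1,y_1),(x_2,y_2)\in\Rdd$ with $\inpr{y_2-y_1,x_2-x_1}<0$, the set $N$ of pairs $((x_1',y_1'),(x_2',y_2'))$ for which this strict inequality persists is open in $\Rdd\times\Rdd$. A closed set $T$ fails to be monotone precisely when there exist two of its points lying in such a bad configuration; that is, $T\notin\Fellm$ iff $T$ hits some open "product box" $G_1\times G_2\subset\Rdd\times\Rdd$ (with $G_1,G_2$ open in $\Rdd$) whose closure consists only of violating pairs, equivalently $\overline{G_1}\times\overline{G_2}\subset N$ for some such bad $N$. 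More carefully: if $(x_1,y_1),(x_2,y_2)\in T$ violate monotonicity, by continuity of $((a,b),(a',b'))\mapsto\inpr{b'-b,a'-a}$ there are disjoint open neighbourhoods $G_1\ni(x_1,y_1)$, $G_2\ni(x_2,y_2)$ such that every $p_1\in G_1$, $p_2\in G_2$ still violate it; then $T\in\Fell_{G_1}\cap\Fell_{G_2}$, and conversely any $T'\in\Fell_{G_1}\cap\Fell_{G_2}$ contains a violating pair and hence is not monotone. Thus the complement of $\Fellm$ equals $\bigcup_{(G_1,G_2)\in\mathcal{B}}(\Fell_{G_1}\cap\Fell_{G_2})$, where $\mathcal{B}$ ranges over pairs of open sets witnessing a violation. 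Each $\Fell_{G}$ is open in the Fell topology (it is a defining subbasic hit-set), so this is a union of finite intersections of collections $\Fell_G$, hence open; therefore $\Fellm$ is closed. For $\Fellcm$ the argument is identical except that a violation of cyclical monotonicity involves a finite tuple $(x_1,y_1),\dots,(x_n,y_n)\in T$ with $\sum_i\inpr{x_i,y_i}<\sum_i\inpr{x_i,y_{i+1}}$; again by continuity of the relevant polynomial in $2dn$ variables one finds open boxes $G_1,\dots,G_n$ around the $n$ points such that any selection of points from them still violates the inequality, giving $T\in\Fell_{G_1}\cap\dots\cap\Fell_{G_n}$, and conversely. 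So the complement of $\Fellcm$ is also a union of finite intersections of hit-sets $\Fell_G$, and $\Fellcm$ is closed. One should note the degenerate possibility $G_i=G_j$ or overlapping boxes when several of the $(x_i,y_i)$ coincide; this causes no trouble since we only need \emph{some} open sets whose hit-sets we intersect, and a closed set hitting $\Fell_{G_1}\cap\cdots\cap\Fell_{G_n}$ still supplies a (possibly repeated) $n$-tuple of its own points inside the respective boxes.

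Next I would handle $\Fellmm\cup\{\varnothing\}$ and $\Fellmcm\cup\{\varnothing\}$. Here I would use the standard characterization of maximality: a monotone set $T$ is maximal monotone iff for every $(x,y)\in\Rdd\setminus T$ there exists $(x_0,y_0)\in T$ with $\inpr{y-y_0,x-x_0}<0$ (and analogously for cyclical monotonicity, with a finite cycle through $(x,y)$ showing it cannot be added). To show the set is closed, suppose $T_k\to T$ in $\Fell(\Rdd)$ with each $T_k$ maximal monotone (and nonempty; the empty set is included in the collection, so we may assume $T\neq\varnothing$ and $T_k\neq\varnothing$). By closedness of $\Fellm$ already proved, $T$ is monotone. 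If $T$ were not maximal, there would be $(x,y)\notin T$ such that $T\cup\{(x,y)\}$ is still monotone, i.e. $\inpr{y-b,x-a}\ge0$ for all $(a,b)\in T$. I would derive a contradiction by a Painlevé--Kuratowski argument: since $(x,y)\notin T$ and $T$ is closed, $(x,y)$ has a neighbourhood missing $T$, so by Fell convergence ($T_k\cap K=\varnothing$ for compact $K$ eventually) we get $(x,y)\notin T_k$ for large $k$; by maximality of $T_k$ there is $(a_k,b_k)\in T_k$ with $\inpr{y-b_k,x-a_k}<0$. The crux is to control the $(a_k,b_k)$: I would invoke the local boundedness of maximal monotone mappings on the interior of their domains together with a growth/coercivity bound to argue that, possibly after passing to a subsequence, $(a_k,b_k)$ stays bounded (otherwise the inner product $\inpr{y-b_k,x-a_k}$ would be forced nonnegative by monotonicity of $T_k$ relative to a nearby point of $T_k$ close to a point of $T$ — using the inner limit $\liminf T_k\supset T$). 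Then $(a_k,b_k)\to(a_\infty,b_\infty)$ along the subsequence, and since $\liminf_k T_k = T$ (Fell convergence), $(a_\infty,b_\infty)\in T$; passing to the limit in the strict inequality gives $\inpr{y-b_\infty,x-a_\infty}\le0$, and monotonicity of $T\cup\{(x,y)\}$ forces equality, which one then rules out (e.g. by perturbing, or by noting maximal monotone $T$ already contains every point where equality can hold in all such relations — the point $(x,y)$ would then have to lie in $T$). For the cyclically monotone case the same scheme works using that maximality of $T_k$ means no finite cycle through $(x,y)$ can be made, equivalently Rockafellar's theorem that $T_k=\partial\psi_k$ for a closed proper convex $\psi_k$, and one passes to the limit of the potentials (normalized) to identify $T$ as a subdifferential containing $(x,y)$, contradicting $(x,y)\notin T$.

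I expect the main obstacle to be exactly this last point: showing that maximality is preserved in the limit, because maximality is not obviously a closed condition — a priori the "certificates" $(a_k,b_k)\in T_k$ witnessing that $(x,y)$ cannot be adjoined could escape to infinity as $k\to\infty$. The resolution must use genuine structural input about maximal monotone maps on Euclidean space (local boundedness on the interior of the domain, the fact that $\dom T$ of a maximal monotone set is "almost convex", and coercivity-type estimates controlling $|b_k|$ in terms of $|a_k|$), all of which are available from the references \cite{Rockafellar-Wets} and \cite{A-A} cited in the excerpt. For the cyclically monotone version, working with the convex potentials via Rockafellar's theorem is the cleanest route and sidesteps some of these estimates, at the cost of a normalization argument for the sequence $\psi_k$ (fixing the value at an interior point of $V=\intr(\dom T)$). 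The routine parts — continuity of the bilinear/polynomial maps, openness of $\Fell_G$, and the bookkeeping of which boxes to choose — I would state briefly and leave to the reader.
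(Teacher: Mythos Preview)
Your treatment of $\Fellm$ and $\Fellcm$ is correct and matches the paper's proof essentially verbatim: the complement is exhibited as a union of finite intersections of hit-sets $\Fell_{G_i}$ by thickening a violating tuple to open boxes via continuity of the inner product.

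For $\Fellmm \cup \{\varnothing\}$ and $\Fellmcm \cup \{\varnothing\}$, however, the paper takes a much shorter route than you do. It simply \emph{cites} Theorem~12.32 and Corollary~12.33 in \cite{Rockafellar-Wets} (alternatively Proposition~1.7 in \cite{A-A}) for the closedness of $\Fellmm \cup \{\varnothing\}$, and then obtains $\Fellmcm \cup \{\varnothing\}$ from the set identity
\[
\Fellmcm \cup \{\varnothing\} = \Fellcm \cap \bigl(\Fellmm \cup \{\varnothing\}\bigr),
\]
which holds because a maximal cyclically monotone mapping is automatically maximal monotone (\cite{Rockafellar-Wets}, Theorem~12.25) and the empty set is cyclically monotone. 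So the maximal cyclically monotone case reduces to an intersection of two sets already known to be closed, with no separate argument via potentials needed.

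Your direct attempt at the maximal cases, by contrast, has genuine gaps. First, the boundedness of the witnesses $(a_k,b_k)$ is not established: monotonicity of $T_k$ gives inequalities between \emph{pairs of points in $T_k$}, not between the external point $(x,y)$ and points of $T_k$, so the sketch ``the inner product would be forced nonnegative by monotonicity of $T_k$ relative to a nearby point'' does not deliver what you need. Second, your endgame is circular: after passing to the limit you obtain $\inpr{y-b_\infty,x-a_\infty}=0$, and to rule this out you appeal to ``maximal monotone $T$ already contains every point where equality can hold in all such relations'' --- but maximality of $T$ is exactly what you are trying to prove. The proofs in \cite{Rockafellar-Wets} that the paper cites go instead through the Minty parametrization $(I+T)^{-1}$, converting maximality into surjectivity of $I+T$ and reducing the problem to convergence of nonexpansive single-valued maps; that is what makes the compactness argument close. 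Your convex-potential route for $\Fellmcm$ could be made to work, but the paper's one-line reduction via the set identity above is both simpler and sidesteps the normalization issues you anticipate.
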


By Lemma~\ref{lem:m:closed}, the collections $\Fellm$, $\Fellcm$, $\Fellmm$ and $\Fellmcm$ are Borel sets of $\Fell(\reals^d \times \reals^d)$.
They are related by
\[
\Fellmcm = \Fellmm \cap \Fellcm 
\quad \text{and} \quad 
\Fellmm \cup \Fellcm \subset \Fellm.
\]
Most inclusions are obvious, but the inclusion $\Fellmcm \subset \Fellmm$ is not \citep[Theorem~12.25]{Rockafellar-Wets}: it says that a cyclically monotone mapping which is not properly contained in another cyclically monotone mapping is not properly contained in another monotone mapping either.

The following proposition gives a sufficient condition under which a maximal cyclically monotone map is determined by its restriction on a set. 

\begin{proposition}[Unique extension]
	\label{prop:unique-extension}
	Let $T \in \Fellmcm$ be such that $\rge T$ is bounded. Let the open set $V \subset \Rd$ satisfy $T(V) \subset \intr(\rge T)$ and $\rge T \subset \cl T(V)$. For any $S \in \Fellmcm$, if $\rge S \subset \cl(\rge T)$, then $S \restr V = T \restr V$ implies $S = T$.
\end{proposition}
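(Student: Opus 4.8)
The plan is to exploit that a maximal cyclically monotone set is the subdifferential of a closed proper convex function, and that such a function is determined up to an additive constant by its values on any set where the subdifferential is non-empty; the conclusion then follows by showing the two potentials agree on $V$ and that the common potential is uniquely extended to all of $\Rd$ by a Fenchel conjugation argument using the range conditions. Concretely, write $T = \partial\psi$ and $S = \partial\varphi$ for closed proper convex $\psi,\varphi:\Rd\to\reals\cup\{+\infty\}$, normalized so that $\psi$ and $\varphi$ agree at one point of $V$. Since $T\restr V = S\restr V$, the subdifferentials $\partial\psi$ and $\partial\varphi$ coincide on the open set $V$; because $V$ is open and $T(V)\subset\intr(\rge T)$ is in particular non-empty, $V$ meets $\dom\partial\psi = \dom\partial\varphi$, and on the connected components of the interior of the common domain that meet $V$ the two convex functions have the same gradient almost everywhere, hence differ by a constant there, which the normalization kills; so $\psi = \varphi$ on an open set $U\subset V$ with $T(U)\subset\intr(\rge T)$.

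Next I would pass to conjugates. Let $\psi^*$ be the Legendre--Fenchel conjugate of $\psi$; then $\partial\psi^* = (\partial\psi)^{-1} = T^{-1}$, so $\dom\partial\psi^*$ is essentially $\rge T$, and similarly $\dom\partial\varphi^* \supset$ (a dense subset of) $\rge S$. The boundedness of $\rge T$ forces $\psi^*$ to be finite and Lipschitz on a neighbourhood of $\cl(\rge T)$, in particular continuous there. The key point is that $\rge S\subset\cl(\rge T)$ and $\rge T\subset\cl T(U)$, combined with $\psi=\varphi$ on $U$: for $y\in T(U)\subset\intr(\rge T)$ one has, by Fenchel--Young equality, $\psi^*(y) = \inpr{x,y}-\psi(x)$ for any $x\in U$ with $y\in T(x)$, and the same formula with $\varphi$, so $\psi^* = \varphi^*$ on $T(U)$. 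Since $T(U)$ is dense in $\rge T$ by the hypothesis $\rge T\subset\cl T(U)$, and both $\psi^*$ and $\varphi^*$ are continuous (indeed convex and finite) on the open set $\intr(\rge T)\supset\intr\cl(T(U))$, we get $\psi^* = \varphi^*$ on a dense subset of $\intr(\rge T)$ hence on all of $\intr(\rge T)$, and then, by lower semicontinuity and finiteness up to the boundary, on $\cl(\rge T)$.

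Finally I would run the argument back: off $\cl(\rge T)$ both $\psi^*$ and $\varphi^*$ take the value $+\infty$? — not quite, but I only need that $\psi^{**}=\psi$ and $\varphi^{**}=\varphi$ are recovered from the conjugates, and for a point $x\in\Rd$ the Fenchel formula $\psi(x) = \sup_{y}\{\inpr{x,y}-\psi^*(y)\}$ over $y\in\dom\psi^*$ can be restricted to $y\in\cl(\rge T)$ because $\dom\psi^*\subset\cl(\rge T)$ (as $\rge\partial\psi^*=\dom\partial\psi\subset\dom\psi$ is irrelevant; rather $\dom\psi^*\subset\cl(\rge\partial\psi)=\cl(\rge T)$ since $\psi^*$ is finite and Lipschitz beyond $\cl(\rge T)$ would contradict $\rge T$ bounded — here I use that $\psi^*$ affine-minorant structure concentrates $\dom\psi^*$ on $\cl(\rge T)$). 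Likewise $\dom\varphi^*\subset\cl(\rge S)\subset\cl(\rge T)$. Since $\psi^* = \varphi^*$ on $\cl(\rge T)$, the two sup-formulas coincide for every $x$, giving $\psi=\varphi$ on $\Rd$, whence $T=\partial\psi=\partial\varphi=S$.

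The main obstacle I anticipate is the bookkeeping around domains of the conjugates: one must argue cleanly that $\dom\psi^*$ (and $\dom\varphi^*$) is contained in $\cl(\rge T)$, using boundedness of $\rge T$ — intuitively, if $\psi^*$ were finite at a point $y_0\notin\cl(\rge T)$, then $y_0\in\dom\psi^* $, and since $\dom\partial\psi^*$ is dense in $\dom\psi^*$ we could find $y$ near $y_0$ with $y\in\dom\partial\psi^* = \rge T$, contradicting $y_0\notin\cl(\rge T)$ if $y_0$ is in the interior of $\dom\psi^*$; handling boundary points of $\dom\psi^*$ requires the standard fact that $\cl\dom\psi^* = \cl\dom\partial\psi^*$ for closed proper convex functions with nonempty subdifferential domain. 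A secondary care point is justifying $\psi=\varphi$ on $U$ (not merely up to a locally constant function): here one uses that $T(U)\subset\intr(\rge T)$ implies $U$ lies in the interior of $\dom\psi$ where $\psi$ is differentiable a.e., and that $U$ can be taken connected (or one argues componentwise and uses that each component meets the single normalization point by shrinking $U$); the hypotheses of the proposition are exactly tailored to make this work.
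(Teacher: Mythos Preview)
Your strategy via Rockafellar potentials and Fenchel conjugation is correct in spirit and closely parallel to the paper's, but the paper runs it \emph{dually from the start}: it writes $T^{-1}=\partial\psi$ and $S^{-1}=\partial\phi$ rather than $T=\partial\psi$, $S=\partial\varphi$. The key observation is that for $y\in T(V)$ there exists $x\in V$ with $y\in T(x)=S(x)$, so $T^{-1}(y)\cap S^{-1}(y)\ne\varnothing$; since $T(V)$ is dense in $\rge T$ by hypothesis, \citet[Corollary~1.5]{A-A} (two maximal monotone operators whose values intersect on a dense subset of an open set coincide on that set) yields $T^{-1}=S^{-1}$ on all of $\intr(\rge T)$. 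Then $\psi$ and $\phi$ differ by a constant on $\intr(\rge T)$, are $+\infty$ outside $\cl(\rge T)$ (so your hesitant ``not quite'' is unwarranted: $\cl\dom\psi^*=\cl\dom\partial\psi^*=\cl(\rge T)$ does hold), and lower semicontinuity forces $\psi=\phi$ everywhere, whence $S=T$.

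Your primal-first route has a genuine gap at the step ``$\psi=\varphi$ on an open $U\subset V$ with $T(U)$ dense in $\rge T$''. Nothing in the hypotheses makes $V$ connected (e.g.\ $d=1$, $T(x)=x/\sqrt{1+x^2}$, $V=\reals\setminus\mathbb{Z}$ satisfies all assumptions), and after normalizing at one point you only obtain $\psi=\varphi$ on the single component $U$ containing that point. You then invoke ``$\rge T\subset\cl T(U)$'', but the proposition only guarantees $\rge T\subset\cl T(V)$; the image of one component need not be dense. The paper's dual ordering sidesteps this because $\intr(\rge T)$, being the interior of a nearly convex set, is convex and hence connected, so a single additive constant suffices there. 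Your argument is repairable---carry the componentwise constants $c_i$ through Fenchel--Young to get $\varphi^*-\psi^*=c_i$ on $T(V_i)$, then use that $\varphi^*-\psi^*$ is continuous on the connected open set $\intr(\rge T)$ while taking values in the discrete set $\{c_i\}$ on the dense subset $\bigcup_i T(V_i)$, hence is constant---but this is precisely the extra work the paper's ordering makes unnecessary.
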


\subsection{Convergence of monotone mappings}

Our main theorem involves graphical convergence of maximal cyclically monotone mappings relative to an open set. The next series of results shows how to leverage this property to extract all kinds of other useful convergence properties, up to, under side conditions, uniform convergence on $\Rd$. Many of the results already apply to the larger class of maximal monotone mappings, rather than maximal cyclically monotone ones.

\begin{lemma}[Domain and locally bounded range]
	\label{lem:TUB}
	Let $T \in \Fellmm(\Rdd)$ and let $K \subset V \subset \dom T$, where $K$ is compact and $V$ is open. There exists an open neighbourhood $\mathcal{G}$ of $T \restr V$ in $\Fell(V \times \Rd)$, an open set $U \subset V$ containing $K$ and a bounded set $B \subset \Rd$ such that every $T' \in \Fellmm$ such that $T' \restr V \in \mathcal{G}$ satisfies $U \subset \dom T'$ and $T'(K) \subset B$; in particular, $T'(K)$ is compact. Since the map $S \mapsto S \restr V : \Fell(\Rdd) \to \Fell(V \times \Rd)$ is continuous, this includes every $T' \in \Fellmm$ in some open neighbourhood of $T$ in $\Fell(\Rdd)$.
	As a consequence, there exists an open set $U \subset V$ containing $K$ such that for any sequence $(T_n)_n$ in $\Fellmm$, if $T_n \Vto T$ as $n \to \infty$, then $U \subset \dom T_n$ and $T_n(K) \subset B$ for all but finitely many $n$.
\end{lemma}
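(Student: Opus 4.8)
The plan is to exploit two structural facts about maximal monotone mappings in $\Rd$: first, that the domain of such a mapping is \emph{nearly convex} (its interior is convex and the domain lies between the interior and its closure), so that a compact subset $K$ of the domain actually sits in the interior of the domain; second, that a maximal monotone mapping is locally bounded on the interior of its domain. Both facts are standard (see \citet[Ch.~12]{Rockafellar-Wets} or \citet{A-A}); I would cite them rather than reprove them. The real work is to make the bound \emph{uniform} over a Fell neighbourhood of $T \restr V$, and this is where I expect the main obstacle to lie.

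First I would fix $K \subset V \subset \dom T$ with $K$ compact and $V$ open. Since $K \subset V \subset \dom T$ and $\intr(\dom T)$ is convex with $\dom T \subset \cl(\intr(\dom T))$, and $K$ is compact, one gets $\delta_0 > 0$ with $K + \delta_0 \ball_d \subset \intr(\dom T) \cap V$; set $U = K + \delta_0 \oball_d$ and $K' = K + \delta_0 \ball_d$, so $K'$ is a compact subset of $\intr(\dom T)$. Local boundedness of $T$ on $\intr(\dom T)$ gives a bounded set, and by a routine compactness-of-$K'$ argument one finds $\rho > 0$ with $T(K') \subset \rho \oball_d$; in fact one can arrange $T(K' + \eta \ball_d) \cap \rho' \ball_d = \varnothing$-type separation, but the clean statement I want is: there is $\rho > 0$ such that $T(x) \subset \rho \oball_d$ for all $x$ in a neighbourhood of $K'$.

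Next comes the uniform step. The key point is that monotonicity lets a \emph{local} bound propagate: if $T'$ is monotone, $x_0 \in \dom T'$ with $T'(x_0) \subset \rho \ball_d$ at finitely many well-chosen ``anchor'' points $x_0$ surrounding a target point $x \in K$, then $T'(x)$ is forced into a bounded set depending only on $\rho$ and the geometry of the anchors — because for $(x,y) \in T'$ and an anchor $(x_0,y_0) \in T'$ we have $\inpr{y - y_0, x - x_0} \ge 0$, and choosing anchors on all sides of $x$ pins $y$ down. Concretely I would cover $K$ by finitely many balls, pick anchor points in $U$ around each, and use Lemma~\ref{lem:oscK} (outer semicontinuity on neighbourhoods of sets, applied to $T$ and to the finite anchor set) together with the hit-and-miss description of the Fell topology on $\Fell(V \times \Rd)$ to choose the neighbourhood $\mathcal{G}$: it should force any $T'$ with $T' \restr V \in \mathcal{G}$ to have each anchor point in $\dom T'$ with image controlled by (a slightly enlarged) $\rho \ball_d$, using finitely many ``hit'' conditions $\Fell_{G_i}$ for small open $G_i$ around anchor points, and enough ``miss'' conditions $\Fell^{K_j}$ to keep the anchor images bounded. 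Once the anchors are controlled uniformly over $\mathcal{G}$, the monotonicity propagation yields $U \subset \dom T'$ and $T'(K) \subset B$ for an a priori bounded $B$, hence $T'(K)$ compact since it is also closed (as $T'$ is closed and $K$ compact). Continuity of $S \mapsto S \restr V$ (Proposition~\ref{prop:Etrace}) then gives the restatement in terms of a Fell neighbourhood of $T$ itself, and the sequential consequence follows since $T_n \Vto T$ means exactly $T_n \restr V \to T \restr V$ in $\Fell(V \times \Rd)$ by Proposition~\ref{prop:Fell:graphconvrelV}, so eventually $T_n \restr V \in \mathcal{G}$.

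The main obstacle, as indicated, is the uniformity: a pointwise statement ``$x_0 \in \dom T'$'' is \emph{not} open in the Fell topology on its own, so one cannot simply demand that each anchor lie in $\dom T'$. The fix is to not track $\dom T'$ directly but to demand that $T' \restr V$ \emph{hit} small open product sets $G_i = W_i \times (y_i + \sigma \oball_d)$ around points $(x_i, y_i) \in T$, which \emph{is} an open condition; hitting such a set produces a point of $T'$ with first coordinate near $x_i$ and second coordinate near $y_i$, and that suffices as an anchor for the monotonicity argument (one works with a slightly fattened anchor set and a slightly enlarged $\rho$). Dovetailing the choice of the $W_i$'s, the radius $\sigma$, the enlargement of $\rho$, and the finitely many compact ``miss'' sets needed to cap the anchor images — all while keeping $K \subset U$ and $U$ inside the region where $T$ itself is locally bounded — is the delicate bookkeeping, but it is exactly the kind of argument packaged by Lemmas~\ref{lem:oscK} and~\ref{cor:Felluniform}, which I would lean on to keep the proof short.
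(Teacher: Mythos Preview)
Your plan matches the paper's: a finite anchor set $X \subset V$, a Fell neighbourhood built from \emph{hit} conditions on product balls $G_x = (x + \eps\oball) \times (y_x + \eps\oball)$ around points $(x,y_x) \in T$, and a monotonicity bound. Two refinements are worth noting. First, the miss conditions you propose are unnecessary: hitting $G_x$ already forces the anchor image near $y_x \in T(x)$, and $T(X)$ is bounded a priori since the finite set $X$ lies in $\intr(\dom T)$ \cite[Corollary~1.3(3)]{A-A}. Second, ``monotonicity propagation'' is not what delivers $U \subset \dom T'$; the paper chooses $X$ so that $K$ lies \emph{robustly} in the interior of $\conv X$ (meaning $K + \eps\ball \subset \conv X'$ whenever $X'$ meets each ball $x + \eps\ball$, $x \in X$), extracts such an $X' \subset \dom T'$ from the hit conditions, and then uses near-convexity of the domain of a maximal monotone map \cite[Corollary~1.3(2)]{A-A} to get $U := K + \eps\oball \subset \intr(\conv X') \subset \intr(\dom T')$. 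The bound on $T'(K)$ is then the explicit estimate of \citet[Proposition~2.1(4)]{A-A}, which is precisely your ``anchors on all sides pin $y$ down'': for $\bar x \in \intr(\conv X')$ and $\bar y \in T'(\bar x)$ one has $|\bar y| \le \max_{y'} |y'| \cdot \operatorname{diam}(X') / d(\bar x,\Rd \setminus \conv X')$. Lemmas~\ref{lem:oscK} and~\ref{cor:Felluniform} play no role here; they are used downstream in Lemma~\ref{lem:TmmKdeltaB}.
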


The Hausdorff distance $d_H(K, L)$ between two compact subsets $K$ and $L$ of $\Rd$ is the infimum over all $\eps > 0$ such that $K \subset L + \eps \ball$ and $L \subset K + \eps \ball$. If one of $K$ and $L$ is empty while the other is not, the Hausdorff distance is defined to be infinity.

\begin{lemma}[From graphical to local Hausdorff convergence]
	\label{lem:TmmKdeltaB}
	Let $T \in \Fellmm$, let $V \subset \dom T$ be open and let $K \subset V$ be compact, both $V$ and $K$ non-empty. For every $\eps > 0$, there exists $\eta > 0$ such that for all $\delta \in (0, \eta]$ there is an open neighbourhood $\mathcal{G}$ of $T \restr V$ in $\Fell(V \times \Rd)$ such that for all $T' \in \Fellmm$ with $T' \restr V \in \mathcal{G}$ we have $K + \delta \ball \subset \dom T'$ and
	\[
	d_H \bigl( T'(K + \delta \ball), T(K) \bigr) \le \eps.
	\]
	As a consequence, for any sequence $(T_n)_n$ in $\Fellmm$, if $T_n \Vto T$ as $n \to \infty$, then 
	\[ 
	\lim_{\delta \downarrow 0} \limsup_{n \to \infty} 
	d_H \bigl( T_n(K+\delta\ball), T(K) \bigr) = 0. 
	\]
\end{lemma}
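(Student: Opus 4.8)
The plan is to establish the two Hausdorff inclusions $T'(K+\delta\ball)\subset T(K)+\eps\ball$ and $T(K)\subset T'(K+\delta\ball)+\eps\ball$ directly from Lemmas~\ref{lem:oscK}, \ref{cor:Felluniform} and~\ref{lem:TUB}; all the genuinely non-trivial work (covering arguments for uniformity, local boundedness of maximal monotone maps on the interior of their domain) is already packaged in those three lemmas, so what is left is careful bookkeeping of the quantifiers. The crucial point is the order in the statement: $\eps$ is fixed first, then $\eta$, then $\delta\in(0,\eta]$, and only afterwards the Fell neighbourhood $\mathcal G$, which may therefore be allowed to depend on $\delta$.

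First I would fix $\eps>0$ and choose $\eta_0>0$ with $K_0:=K+\eta_0\ball\subset V$, possible since $K$ is compact and $V$ open. Applying Lemma~\ref{lem:TUB} to the compact set $K_0\subset V\subset\dom T$ produces an open $U_1\supset K_0$, a bounded $B_1$ and an open neighbourhood $\mathcal G_1$ of $T\restr V$ such that $T'\restr V\in\mathcal G_1$ forces $U_1\subset\dom T'$ and $T'(K_0)\subset B_1$; in particular $T(K_0)\subset B_1$. Fix $\rho>0$ with $B_1\subset\rho\ball$. Next, applying Lemma~\ref{lem:oscK} to the closed set $T$ with $A=K$ and parameters $\eps/2,\rho$ gives an open $W\supset K$ with $T(W)\cap\rho\ball\subset T(K)+(\eps/2)\ball$, and I pick $r>0$ with $K+2r\ball\subset W$ (such $r$ exists since $K$ is compact and $W$ an open neighbourhood of $K$). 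Finally set $\eta:=\min\{\eta_0/2,\,r,\,\eps/2\}>0$.

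Now fix $\delta\in(0,\eta]$. Lemma~\ref{cor:Felluniform}, applied on the open set $V$ with parameters $\delta,\rho$ and compact set $K_0$, yields an open neighbourhood $\mathcal G_2$ of $T\restr V$ (depending on $\delta$, which is permitted) such that $S_1\restr V,S_2\restr V\in\mathcal G_2$ implies $S_1(A)\cap\rho\ball\subset S_2(A+\delta\ball)+\delta\ball$ for every $A\subset K_0$. Put $\mathcal G:=\mathcal G_1\cap\mathcal G_2$, a neighbourhood of $T\restr V$, and take any $T'\in\Fellmm$ with $T'\restr V\in\mathcal G$. Then $K+\delta\ball\subset K_0\subset U_1\subset\dom T'$. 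For the outer inclusion, take $A=K+\delta\ball$ with $(S_1,S_2)=(T',T)$: since $\delta\le\eta\le\eta_0/2$ we have $A\subset K_0$ and $A+\delta\ball=K+2\delta\ball\subset K_0$, and since $2\delta\le 2r$ also $K+2\delta\ball\subset W$; using $T'(K+\delta\ball)\subset T'(K_0)\subset B_1\subset\rho\ball$ we get
\[
T'(K+\delta\ball)\subset T(K+2\delta\ball)+\delta\ball\subset\bigl(T(W)\cap\rho\ball\bigr)+\delta\ball\subset T(K)+(\eps/2+\delta)\ball\subset T(K)+\eps\ball .
\]
For the inner inclusion, take $A=K$ with $(S_1,S_2)=(T,T')$: since $T(K)\subset T(K_0)\subset B_1\subset\rho\ball$,
\[
T(K)=T(K)\cap\rho\ball\subset T'(K+\delta\ball)+\delta\ball\subset T'(K+\delta\ball)+\eps\ball .
\]
As $T(K)$ and $T'(K+\delta\ball)$ are non-empty (their arguments are non-empty subsets of the respective domains) and bounded (both inside $B_1$), the two inclusions give $d_H\bigl(T'(K+\delta\ball),T(K)\bigr)\le\eps$, which is the first assertion. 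For the consequence, if $(T_n)_n$ in $\Fellmm$ satisfies $T_n\Vto T$, then $T_n\restr V\to T\restr V$ in $\Fell(V\times\Rd)$ by Proposition~\ref{prop:Fell:graphconvrelV}, so for given $\eps$ and $\delta\in(0,\eta]$ the neighbourhood $\mathcal G$ above contains $T_n\restr V$ for all large $n$; hence $\limsup_{n\to\infty}d_H\bigl(T_n(K+\delta\ball),T(K)\bigr)\le\eps$ for every such $\delta$, and letting $\delta\downarrow0$ and then $\eps\downarrow0$ yields $\lim_{\delta\downarrow0}\limsup_{n\to\infty}d_H(T_n(K+\delta\ball),T(K))=0$.

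The only real subtlety, and the thing I would watch most carefully, is that the outer inclusion genuinely needs a $\delta$-sized enlargement on the $T$-side when invoking Lemma~\ref{cor:Felluniform} — one cannot compare $T'$ on $K+\delta\ball$ with $T$ on $K$ itself — which forces the approximation parameter fed to Lemma~\ref{cor:Felluniform} down to $\delta$. This is legitimate precisely because in the statement $\mathcal G$ is chosen after $\delta$; the remaining care is just to check that the nested enlargements $K+\delta\ball$ and $K+2\delta\ball$ stay inside both $K_0$ and $W$, which is exactly what the choice $\eta=\min\{\eta_0/2,\,r,\,\eps/2\}$ guarantees.
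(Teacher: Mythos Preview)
Your proof is correct and follows essentially the same approach as the paper: both arguments combine Lemma~\ref{lem:oscK} (outer semicontinuity of $T$ near $K$), Lemma~\ref{lem:TUB} (domain containment and uniform boundedness of $T'$ on an enlarged compact), and Lemma~\ref{cor:Felluniform} (the symmetric $\delta$-comparison) to establish the two Hausdorff inclusions. The only cosmetic difference is the order in which you invoke Lemmas~\ref{lem:TUB} and~\ref{lem:oscK}: the paper first applies outer semicontinuity to obtain $\eta$ with $T(K+2\eta\ball)\subset T(K)+(\eps/2)\ball$ and then invokes Lemma~\ref{lem:TUB} on $K+2\eta\ball$, whereas you first fix $K_0=K+\eta_0\ball$, extract $\rho$ from Lemma~\ref{lem:TUB}, and then feed that $\rho$ into Lemma~\ref{lem:oscK}; this makes the role of the truncation $\cap\,\rho\ball$ in Lemma~\ref{lem:oscK} slightly more explicit but changes nothing of substance.
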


If, additionally, the maximal monotone limit map $T$ is single-valued on a compact set $K$ in the interior of its domain, then the previous result on the Hausdorff distance implies uniform convergence on $K$.

\begin{proposition}[Local uniform convergence]
	\label{prop:TmmKsingval}
	Let $T \in \Fellmm$, let $V \subset \dom T$ be open and let $K \subset V$ be compact and non-empty. Assume that $T$ is single-valued on $K$. For every $\eps > 0$ there exists an open neighbourhood $\mathcal{G}$ of $T \restr V$ in $\Fell(V \times \Rd)$ such that for all $T' \in \Fellmm$ with $T' \restr V \in \mathcal{G}$, we have $K \subset \dom T'$ and
	\[
	\sup_{x \in K} \sup_{y \in T'(x)} |y - T(x)| \le \eps.
	\]
	As a consequence, for any sequence $(T_n)_n$ in $\Fellmm$, if $T_n \Vto T$ as $n \to \infty$, then $K \subset \dom T_n$ for all but finitely many $n$ and $\sup_{x \in K} \sup_{y \in T_n(x)} |y - T(x)| \to 0$ as $n \to \infty$.
\end{proposition}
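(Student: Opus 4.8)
The plan is to prove the neighbourhood statement first and then read off the statement about sequences. Indeed, once the neighbourhood version is in hand, if $T_n \Vto T$ then $T_n \restr V \to T \restr V$ in $\Fell(V \times \Rd)$ by Proposition~\ref{prop:Fell:graphconvrelV}, so $T_n \restr V$ eventually belongs to the open neighbourhood $\mathcal{G}$ produced for any prescribed $\eps > 0$; running this through a sequence $\eps \downarrow 0$ gives $K \subset \dom T_n$ for all large $n$ and $\sup_{x \in K} \sup_{y \in T_n(x)} |y - T(x)| \to 0$.

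First I would record a uniform oscillation bound for the limit map along $K$: for every $\eps' > 0$ there is $\delta_0 > 0$ with $K + \delta_0 \ball \subset V$ such that $T(\{x : |x - x_0| \le \delta_0\}) \subset T(x_0) + \eps' \ball$ for \emph{every} $x_0 \in K$. This is a routine compactness argument using that a maximal monotone map is locally bounded on the open set $V \subset \dom T$ (the content of Lemma~\ref{lem:TUB}), that $T$ is closed, and that $T$ is single-valued on $K$: a failure of the bound would produce, along a subsequence of centres converging to some $x_0 \in K$, a bounded sequence of graph points of $T$ whose limit would contradict single-valuedness of $T$ at $x_0$.

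Next, fix $\eps > 0$, put $\eps' = \eps / 3$, take $\delta_0$ as above, and cover the compact set $K$ by finitely many balls $x_0^{(i)} + (\delta_0 / 2) \oball$, $i = 1, \dots, m$, with centres $x_0^{(i)} \in K$. For each $i$ apply Lemma~\ref{lem:TmmKdeltaB} to $T$, the open set $V$, the compact set $\bar B_i := x_0^{(i)} + \delta_0 \ball \subset V$ and accuracy $\eps'$; this supplies $\eta_i > 0$. Choosing the common radius $\delta := \min\{\eta_1, \dots, \eta_m, \delta_0 / 2\} > 0$, for this $\delta$ each $i$ then provides an open neighbourhood $\mathcal{G}_i$ of $T \restr V$ such that every $T' \in \Fellmm$ with $T' \restr V \in \mathcal{G}_i$ satisfies $\bar B_i + \delta \ball \subset \dom T'$ and $d_H(T'(\bar B_i + \delta \ball), T(\bar B_i)) \le \eps'$, hence $T'(\bar B_i + \delta \ball) \subset T(x_0^{(i)}) + 2 \eps' \ball$ by the oscillation bound. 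Put $\mathcal{G} := \bigcap_{i=1}^m \mathcal{G}_i$, an open neighbourhood of $T \restr V$. Now take $T' \in \Fellmm$ with $T' \restr V \in \mathcal{G}$, take $x \in K$, and pick $i$ with $|x - x_0^{(i)}| < \delta_0 / 2$; then $x \in \bar B_i + \delta \ball \subset \dom T'$. For $y \in T'(x)$ and any unit vector $u$, the point $x + \delta u$ lies in $\bar B_i$ because $\delta \le \delta_0 / 2$, so there is $z_u \in T'(x + \delta u) \subset T(x_0^{(i)}) + 2\eps'\ball$; monotonicity of $T'$ applied to $(x, y)$ and $(x + \delta u, z_u)$ gives $\inpr{z_u - y, \delta u} \ge 0$, whence
\[
	\inpr{y - T(x_0^{(i)}), u} \le \inpr{z_u - T(x_0^{(i)}), u} \le |z_u - T(x_0^{(i)})| \le 2 \eps'.
\]
Taking the supremum over $u$ yields $|y - T(x_0^{(i)})| \le 2 \eps'$, and since $|T(x) - T(x_0^{(i)})| \le \eps'$ by the oscillation bound, $|y - T(x)| \le 3 \eps' = \eps$. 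As $x \in K$ and $y \in T'(x)$ were arbitrary, this is precisely the neighbourhood statement.

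The hard part is exactly this last conversion. Lemma~\ref{lem:TmmKdeltaB} only controls the Hausdorff distance between \emph{images} of small neighbourhoods, not the value $T'(x)$ at a prescribed point, and the monotone ``squeeze'' — bounding $y \in T'(x)$ by playing it against the values $z_u$ at the nearby points $x + \delta u$ through the monotonicity inequality — is what upgrades the set-level estimate into the required pointwise one; covering $K$ by finitely many small balls is what makes the estimate uniform in $x$ and turns the family $(\mathcal{G}_i)_i$ into a single neighbourhood $\mathcal{G}$.
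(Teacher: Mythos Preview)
Your proof is correct and the overall strategy---apply Lemma~\ref{lem:TmmKdeltaB} at finitely many cover points of $K$, combine with an oscillation bound for $T$, intersect the resulting neighbourhoods---is the same as the paper's. Two differences are worth noting. First, the paper applies Lemma~\ref{lem:TmmKdeltaB} directly to the singletons $\{z\}$ for the cover points $z$, which immediately yields $T'(z+\delta_z\ball)\subset T(z)+(\eps/2)\ball$; together with $T(z+\delta_z\ball)\subset T(z)+(\eps/2)\ball$ from Lemma~\ref{lem:oscK}, both $T'(x)$ and $T(x)$ land in the same $(\eps/2)$-ball around the single point $T(z)$, and the triangle inequality finishes. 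Second---and this is the main simplification---the monotonicity ``squeeze'' you introduce is unnecessary: since $|x-x_0^{(i)}|<\delta_0/2$ already gives $x\in\bar B_i\subset \bar B_i+\delta\ball$, the one-sided Hausdorff inclusion $T'(\bar B_i+\delta\ball)\subset T(\bar B_i)+\eps'\ball\subset T(x_0^{(i)})+2\eps'\ball$ applies to $T'(x)$ itself, yielding $|y-T(x_0^{(i)})|\le 2\eps'$ for every $y\in T'(x)$ without any detour through the points $x+\delta u$. So the ``hard part'' you single out is in fact not needed; Lemma~\ref{lem:TmmKdeltaB} already delivers pointwise control at prescribed points once those points lie in the inflated set.
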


If $T$ is single-valued only on the boundary $\partial K$, then still, the Hausdorff convergence result in Lemma~\ref{lem:TmmKdeltaB} can be strengthened by removal of the inflation factor $\delta$.

\begin{proposition}[From graphical to Hausdorff convergence (bis)]
	\label{prop:graphHausdorff}
	Let $T \in \Fellmm$, let $V \subset \dom T$ be open and let $K \subset V$ be compact and non-empty. Assume $T$ is single-valued on $\partial K$. For every $\eps > 0$ there exists an open neighbourhood $\mathcal{G}$ of $T \restr V$ in $\Fell(V \times \Rd)$ such that for all $T' \in \Fellmm$ with $T' \restr V \in \mathcal{G}$ we have $K \subset \dom T'$ and
	\[
	d_H \bigl( T'(K), T(K) \bigr) \le \eps.
	\]
	As a consequence, for any sequence $(T_n)_n$ in $\Fellmm$, if $T_n \Vto T$, then $d_H(T_n(K), T(K)) \to 0$ as $n \to \infty$.
\end{proposition}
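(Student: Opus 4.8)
The plan is to prove the pointwise-in-$T'$ version first — for every $\eps>0$ there is an open neighbourhood $\mathcal{G}$ of $T\restr V$ in $\Fell(V\times\Rd)$ such that any $T'\in\Fellmm$ with $T'\restr V\in\mathcal{G}$ satisfies $K\subseteq\dom T'$ and $d_H(T'(K),T(K))\le\eps$ — after which the convergence along a sequence follows exactly as in Proposition~\ref{prop:TmmKsingval}: $T_n\Vto T$ means $T_n\restr V\to T\restr V$ in $\Fell(V\times\Rd)$ by Proposition~\ref{prop:Fell:graphconvrelV}, so $T_n\restr V$ is eventually in $\mathcal{G}$, and $\eps$ is arbitrary. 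I would split the Hausdorff bound into its two inclusions. The inclusion $T'(K)\subseteq T(K)+\eps\ball$, together with $K\subseteq\dom T'$, is already contained in Lemma~\ref{lem:TmmKdeltaB}: fixing one admissible $\delta$ there, for $T'\restr V$ in the neighbourhood that lemma supplies we have $K\subseteq K+\delta\ball\subseteq\dom T'$ and $T'(K)\subseteq T'(K+\delta\ball)\subseteq T(K)+\eps\ball$. Single-valuedness plays no role here.

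The substance is the reverse inclusion $T(K)\subseteq T'(K)+\eps\ball$, and this is where single-valuedness on $\partial K$ is used. First fix $\rho>0$ with $T(K)\subseteq\rho\ball$ (legitimate since $T(K)$ is compact by Lemma~\ref{lem:TUB}); note $\partial K\neq\emptyset$, as a non-empty compact set cannot be clopen in $\Rd$. For $\beta>0$ to be chosen, write $K_\beta=\{x\in K:d(x,\partial K)\ge\beta\}$ (compact, possibly empty) and $K_\partial=\{x\in K:d(x,\partial K)\le\beta\}$, so $K=K_\beta\cup K_\partial$. For the boundary collar: $T$ is outer semicontinuous, so Lemma~\ref{lem:oscK} applied to $\partial K$ with radius $\rho$ and tolerance $\eps/4$ gives an open $U\supseteq\partial K$ with $T(U)\cap\rho\ball\subseteq T(\partial K)+(\eps/4)\ball$; choose $\beta>0$ small enough that $\partial K+\beta\ball\subseteq U$, so that $K_\partial\subseteq U$ and hence $T(K_\partial)\subseteq T(\partial K)+(\eps/4)\ball$. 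Since $T$ is single-valued on $\partial K$, Proposition~\ref{prop:TmmKsingval} applied to the compact set $\partial K$ with tolerance $\eps/4$ gives a neighbourhood $\mathcal{G}_3$ of $T\restr V$ such that $T'\restr V\in\mathcal{G}_3$ forces $\partial K\subseteq\dom T'$ and $T'(x)\subseteq T(x)+(\eps/4)\ball$ for all $x\in\partial K$; as each $T(x)$ is then a singleton and $T'(x)\neq\emptyset$, this yields $T(\partial K)\subseteq T'(\partial K)+(\eps/4)\ball\subseteq T'(K)+(\eps/4)\ball$, whence $T(K_\partial)\subseteq T'(K)+(\eps/2)\ball$. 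For the deep part, apply Lemma~\ref{cor:Felluniform} to the compact set $K_\beta\subseteq V$ with radius $\rho$ and tolerance $\eps'=\min(\eps/2,\beta/2)$, obtaining a neighbourhood $\mathcal{G}_2$; with $T_1=T$, $T_2=T'$ and $A=K_\beta$ it gives $T(K_\beta)=T(K_\beta)\cap\rho\ball\subseteq T'(K_\beta+\eps'\ball)+\eps'\ball$, and since $\eps'<\beta$ an elementary argument shows $K_\beta+\eps'\ball\subseteq K$, so $T(K_\beta)\subseteq T'(K)+(\eps/2)\ball$. Adding the two pieces, $T(K)\subseteq T'(K)+(\eps/2)\ball$.

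Taking $\mathcal{G}$ to be the intersection of $\mathcal{G}_2$, $\mathcal{G}_3$ and the neighbourhood from Lemma~\ref{lem:TmmKdeltaB} completes the pointwise-in-$T'$ statement, and the sequential consequence follows as noted. The degenerate cases are harmless: if $K_\beta=\emptyset$ the deep part simply drops out, and if $\intr K=\emptyset$ then $K=K_\partial$. I expect the reverse inclusion to be the only real obstacle: Lemma~\ref{lem:TmmKdeltaB} controls $T'$ only on the inflated set $K+\delta\ball$, so a priori the restriction of $T'$ to $K$ itself could fail to approximate the values of $T$ attained at points of $K$ arbitrarily close to $\partial K$. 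Single-valuedness of $T$ on $\partial K$ together with outer semicontinuity is precisely what pins $T$ on a thin boundary collar down to the (single, stably approximated) boundary values, so that Proposition~\ref{prop:TmmKsingval} can be invoked there.
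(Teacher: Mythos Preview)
Your proof is correct. The overall strategy matches the paper's: the inclusion $T'(K)\subset T(K)+\eps\ball$ comes straight from Lemma~\ref{lem:TmmKdeltaB}, and the reverse inclusion hinges on Proposition~\ref{prop:TmmKsingval} applied to $\partial K$. The organizational difference lies in the hard direction. You split $K$ itself into a deep part $K_\beta$ and a boundary collar $K_\partial$, handle $T(K_\beta)$ via Lemma~\ref{cor:Felluniform} (using $K_\beta+\eps'\ball\subset K$), and reduce $T(K_\partial)$ to $T(\partial K)$ via Lemma~\ref{lem:oscK} before invoking Proposition~\ref{prop:TmmKsingval}. The paper instead works with the inflated set and the identity $K+\delta\ball = K\cup(\partial K+\delta\ball)$: from $T(K)\subset T'(K+\delta\ball)+\eps\ball$ it splits $T'(K+\delta\ball)=T'(K)\cup T'(\partial K+\delta\ball)$ and bounds the second piece by chaining Lemma~\ref{lem:TmmKdeltaB} (applied to $\partial K$) with Proposition~\ref{prop:TmmKsingval}. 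The paper's route is a bit slicker, since it stays at the level of the already-packaged Lemma~\ref{lem:TmmKdeltaB} rather than descending to Lemmas~\ref{lem:oscK} and~\ref{cor:Felluniform}; your route is slightly more hands-on but equally valid and makes the role of the boundary collar more explicit.
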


In certain situations, graphical convergence relative to an open set actually implies graphical convergence on the whole of $\Rd$. This, in turn, can then be leveraged to deduce uniform convergence on unbounded sets.

\begin{proposition}[From graphical convergence relative to an open set to convergence everywhere]
	\label{prop:partialcomplete}
	Let $T \in \Fellmcm$ be such that $\rge T$ is bounded. Let the open set $U \subset \Rd$ satisfy $T(U) \subset \intr(\rge T)$ and $\rge T \subset \cl(T(U))$. For any neighbourhood $\mathcal{G}$ of $T$ in $\Fell(\Rdd)$ there exists a neighbourhood $\mathcal{H}$ of $T \restr U$ in $\Fell(U \times \Rd)$ such that for all $T' \in \Fellmcm$ with $\rge T' \subset \cl(\rge T)$ and $T' \restr U \in \mathcal{H}$, we have $T' \in \mathcal{G}$. Hence, if the sequence $(T_n)_n$ in $\Fellmcm$ is such that $\rge T_n \subset \cl(\rge T)$ for all $n$ and $T_n \restr U \to T \restr U$ as $n \to \infty$ in $\Fell(U \times \Rd)$, then actually $T_n \to T$ as $n \to \infty$ in $\Fell(\Rdd)$.
\end{proposition}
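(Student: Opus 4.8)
The plan is to argue by contradiction, combining the compactness of the Fell space $\Fell(\Rdd)$ with the uniqueness of extension established in Proposition~\ref{prop:unique-extension}. First I would record a preliminary observation that handles all degeneracies: since $T \in \Fellmcm$ is non-empty, $\rge T \ne \emptyset$, and then $\rge T \subset \cl(T(U))$ forces $T(U) \ne \emptyset$, hence $U \cap \dom T \ne \emptyset$ and $T \restr U \ne \emptyset$ as an element of $\Fell(U \times \Rd)$.

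Suppose the first assertion fails. Then some neighbourhood $\mathcal{G}$ of $T$ in $\Fell(\Rdd)$ has the property that for every neighbourhood $\mathcal{H}$ of $T \restr U$ in $\Fell(U \times \Rd)$ there is $T'_{\mathcal H} \in \Fellmcm$ with $\rge T'_{\mathcal H} \subset \cl(\rge T)$ and $T'_{\mathcal H} \restr U \in \mathcal{H}$ but $T'_{\mathcal H} \notin \mathcal{G}$. Because $U \times \Rd$ is open in $\reals^{2d}$, hence LCHS, the space $\Fell(U \times \Rd)$ is metrizable, so running $\mathcal{H}$ through a countable decreasing neighbourhood base of $T \restr U$ produces a sequence $(T_n)_n$ in $\Fellmcm$ with $\rge T_n \subset \cl(\rge T)$, with $T_n \restr U \to T \restr U$ in $\Fell(U \times \Rd)$, and with $T_n \notin \mathcal{G}$ for all $n$.

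Next I would invoke compactness of $\Fell(\Rdd)$ (a compact metric space, since $\reals^{2d}$ is LCHS) to pass to a subsequence with $T_n \to T^*$ in $\Fell(\Rdd)$, and check that $T^*$ meets the hypotheses of Proposition~\ref{prop:unique-extension} with $S = T^*$, $V = U$: (i) $T^* \in \Fellmcm \cup \{\emptyset\}$ because that set is closed by Lemma~\ref{lem:m:closed}; (ii) $T^* \restr U = T \restr U$ because $S \mapsto S \restr U$ is continuous (Proposition~\ref{prop:Etrace}) and limits in the metric space $\Fell(U \times \Rd)$ are unique — and since $T \restr U \ne \emptyset$ this also upgrades (i) to $T^* \in \Fellmcm$; (iii) $\rge T^* \subset \cl(\rge T)$ because each $T_n$ lies in the closed set $\Rd \times \cl(\rge T)$ and a Fell limit of closed sets coincides with their Painlev\'e--Kuratowski outer limit \citep[Theorem~4.5]{Rockafellar-Wets}, which is contained in any closed set containing all terms. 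Proposition~\ref{prop:unique-extension} then gives $T^* = T$, so $T_n \to T$ in $\Fell(\Rdd)$, contradicting $T_n \notin \mathcal{G}$. The final ``Hence'' statement is then immediate: for any neighbourhood $\mathcal{G}$ of $T$, the matching $\mathcal{H}$ eventually contains $T_n \restr U$, so $\mathcal{G}$ eventually contains $T_n$.

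I expect no serious computation here; the one delicate step is ruling out $T^* = \emptyset$, which is precisely where the standing hypothesis $\rge T \subset \cl(T(U))$ (i.e.\ the non-emptiness of $T \restr U$) enters — everything else is soft topology plus the already-available uniqueness result.
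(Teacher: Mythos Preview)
Your proposal is correct and follows essentially the same route as the paper: contradiction, extract a sequence via metrizability of $\Fell(U\times\Rd)$, pass to an accumulation point in the compact space $\Fell(\Rdd)$, rule out the empty set using $T(U)\ne\emptyset$, and invoke Proposition~\ref{prop:unique-extension} after checking $S\restr U=T\restr U$ and $\rge S\subset\cl(\rge T)$. The only cosmetic difference is that the paper appeals to the lower semi-continuity of $S\mapsto\cl(\rge S)$ (Lemma~\ref{lem:domrgelsc}) for the range inclusion, whereas you obtain it directly from $T_n\subset\Rd\times\cl(\rge T)$ and closedness under Painlev\'e--Kuratowski limits; both are fine.
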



The horizon of $E \subset \Rd$ is defined as\footnote{Inspired from \citet[Section~3.A]{Rockafellar-Wets} in the so-called celestial model for the cosmic closure of $\Rd$.}
\[
\hzn E = \left\{ 
u \in \sphere_{d-1} : 
\exists (x_n)_n \in E, |x_n| \to \infty, x_n / |x_n| \to u 
\right\}.
\]
If $E$ is bounded, then $\hzn E$ is empty; if $E$ is unbounded, $\hzn E$ is a closed, non-empty subset of $\sphere_{d-1}$.

The support function of a non-empty bounded set $C \subset \Rd$ is $\sigma_C(u) = \sup_{v \in C} \inpr{u, v}$ for $u \in \Rd$. A compact, convex set $C$ is said to be strictly convex in direction $u \in \sphere_{d-1}$ if the function $v \mapsto \inpr{u, v}$ attains its maximum on $C$ in a single point, i.e., $\argmax_{v \in C} \inpr{u, v}$ is a singleton and thus an exposed point of $C$.

\begin{theorem}[From graphical convergence to uniform convergence on unbounded sets]
	\label{thm:UCT}
	Let $T \in \Fellmm$ have bounded range. Let $E \subset \Rd$ be unbounded only in directions $u \in \sphere_{d-1}$ in which $C = \cl(\rge T)$ is strictly convex, i.e., for every $u \in \hzn E$ the set $\argmax_{c \in C} \inpr{u, c}$ is a singleton. Suppose $T$ is single-valued on $\cl E$. Then for any $\eps > 0$, there exists a neighbourhood $\mathcal{G}$ of $T$ in $\Fell$ such that for any $T' \in \mathcal{G} \cap \Fellmm$ for which $\rge T' \subset C$, we have
	\[
		\sup_{x \in E} \sup_{y \in T'(x)} |y - T(x)| \le \eps.
	\]
\end{theorem}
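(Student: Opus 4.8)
The plan is to argue by contradiction, reducing everything to two ingredients: the equivalence of Fell convergence with Painlev\'e--Kuratowski convergence (valid here since all graphs are closed), and the defining monotonicity inequality, together with one elementary slice fact. The slice fact is: if $C \subset \Rd$ is non-empty and compact and $u \in \sphere_{d-1}$ is such that $\argmax_{c \in C} \inpr{c, u}$ is a singleton $\{c^*\}$, then every sequence $(c_n)_n$ in $C$ with $\inpr{c_n, u} \to \sigma_C(u)$ converges to $c^*$; indeed, any subsequential limit lies in $C$ and attains $\sigma_C(u)$, hence equals $c^*$. No convexity of $C$ is used. I also record that, because $C = \cl(\rge T)$ and $v \mapsto \inpr{v, u}$ is continuous, $\sigma_C(u) = \sup_{y \in \rge T} \inpr{y, u}$ for every $u$; this identity, rather than the mere inclusion $\rge T \subset C$, is what makes the estimates below close.

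Fix $\eps > 0$ and suppose no neighbourhood $\mathcal{G}$ with the stated property exists. Since $\Fell(\Rdd)$ is metrizable, pick a decreasing countable neighbourhood basis $(\mathcal{G}_k)_k$ of $T$; each $\mathcal{G}_k$ fails, so there are $T_k' \in \mathcal{G}_k \cap \Fellmm$ with $\rge T_k' \subset C$, and $x_k \in E$, $y_k \in T_k'(x_k)$, with $|y_k - T(x_k)| > \eps$. Then $T_k' \to T$ in $\Fell(\Rdd)$, equivalently $T = \liminf_k T_k' = \limsup_k T_k'$. Passing to subsequences (not relabelled), I distinguish two cases. If $(x_k)_k$ has a bounded subsequence, pass to it and, since $\rge T_k' \subset C$ is bounded, pass further so that $x_k \to x^* \in \cl E$ and $y_k \to y^{**}$. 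Then $(x^*, y^{**}) \in \limsup_k T_k' = T$, so $y^{**} \in T(x^*)$, a singleton by hypothesis; moreover $(x_k, T(x_k)) \in T$ with $T(x_k)$ bounded, so every subsequential limit of $T(x_k)$ lies in $T(x^*)$, whence $T(x_k) \to T(x^*) = y^{**}$. Thus $|y_k - T(x_k)| \to 0$, contradicting $|y_k - T(x_k)| > \eps$.

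In the remaining case $|x_k| \to \infty$; pass to a subsequence with $x_k/|x_k| \to u^* \in \hzn E$, and write $\{c^*\} = \argmax_{c \in C} \inpr{c, u^*}$, a singleton by hypothesis. I claim $T(x_k) \to c^*$ and $y_k \to c^*$, which forces $|y_k - T(x_k)| \to 0$ and the same contradiction. For $T(x_k)$: for any $(a, b) \in T$, monotonicity gives $\inpr{T(x_k) - b,\, x_k - a} \ge 0$; dividing by $|x_k - a| > 0$ (nonzero for large $k$) and letting $k \to \infty$, using $(x_k - a)/|x_k - a| \to u^*$ and boundedness of $\rge T$, yields $\liminf_k \inpr{T(x_k), u^*} \ge \inpr{b, u^*}$; taking the supremum over $(a, b) \in T$ gives $\liminf_k \inpr{T(x_k), u^*} \ge \sup_{y \in \rge T} \inpr{y, u^*} = \sigma_C(u^*)$, while $\inpr{T(x_k), u^*} \le \sigma_C(u^*)$ since $T(x_k) \in C$, so $\inpr{T(x_k), u^*} \to \sigma_C(u^*)$ and the slice fact applies. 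For $y_k$: given $(a, b) \in T = \liminf_k T_k'$, choose $(a_k, b_k) \in T_k'$ with $(a_k, b_k) \to (a, b)$; monotonicity of $T_k'$ gives $\inpr{y_k - b_k,\, x_k - a_k} \ge 0$; dividing by $|x_k - a_k| > 0$ (nonzero for large $k$, as $a_k$ is bounded and $|x_k| \to \infty$) and letting $k \to \infty$, using $(x_k - a_k)/|x_k - a_k| \to u^*$, $b_k \to b$, and boundedness of $y_k \in C$, yields $\liminf_k \inpr{y_k, u^*} \ge \inpr{b, u^*}$; taking the supremum over $(a, b) \in T$ and using $\inpr{y_k, u^*} \le \sigma_C(u^*)$, the slice fact gives $y_k \to c^*$.

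The main obstacle is the unbounded case, and within it the transfer of the monotonicity estimate from $T$ to the nearby maps $T_k'$: one needs, for each reference pair $(a, b) \in T$, approximating pairs in $T_k'$ whose first coordinates remain bounded, which is precisely what the inner-limit characterisation of Painlev\'e--Kuratowski convergence provides. A more direct, quantitative route is also available: cover the compact set $\hzn E$ by finitely many strictly convex directions, use that $x/|x|$ stays near $\hzn E$ once $|x|$ is large, and combine a finite set of anchor pairs of $T$ with $y$-coordinate near the exposed points with local uniform control from Proposition~\ref{prop:TmmKsingval} on a large ball; but the compactness argument above avoids tracking the several small parameters. The strict convexity hypothesis enters only through the slice fact, and it is there --- via the support function and the bilinear monotonicity inequality --- that the bilinearity of the inner product is used.
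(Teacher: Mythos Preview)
Your proof is correct and takes a genuinely different route from the paper's. The paper constructs the Fell neighbourhood $\mathcal{G}$ explicitly: it fixes a finite $\eta$-net $W$ of $C$ inside $\rge T$, chooses anchor pairs $(v_w, w) \in T$ for $w \in W$, and defines $\mathcal{G}'$ as the collection of $T'$ whose graphs hit small balls around all these anchors; a quantitative monotonicity estimate then shows that for $|x|$ large, any $y \in T'(x)$ satisfies $\inpr{y, u_x} > \sigma_C(u_x) - \delta$ for some $u_x \in \hzn E$ close to $x/|x|$, and a uniform version of your slice fact (their Lemma on support functions, proved via compactness of $\hzn E \times C$) closes the argument. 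You instead argue by contradiction and sequential compactness, passing to subsequences and invoking the Painlev\'e--Kuratowski characterisation of Fell convergence to transport anchor pairs from $T$ into nearby $T_k'$ via the inner limit; your slice fact is then applied one direction at a time, avoiding the uniform version. What the paper's approach buys is an explicit description of $\mathcal{G}$ and, in principle, quantitative control over how the modulus depends on $\eps$; what your approach buys is exactly what you say in your closing remark --- no bookkeeping of the parameters $\delta, \eta, \sigma, \gamma, \tau, \rho$, and a cleaner separation of the bounded and unbounded regimes. Both hinge on the same mechanism: monotonicity forces $\inpr{T'(x), u}$ to be nearly maximal on $C$ once $x$ recedes in direction $u$, and strict convexity of $C$ in that direction pins down $T'(x)$.
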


\section{Coupling measures and their support}
\label{sec:coupling}

The set of probability measures on $\Rd \times \Rd$ with cyclically monotone support is denoted by
\[ 
	\Pcm(\Rdd) = \left\{ \pi \in \Prob(\Rdd) : \spt \pi \in \Fellcm \right\}.
\]
We write $\Pcm = \Pcm(\Rdd)$ if the base space is clear from the context.
Let
\[
	\varPicm(P, Q) = \varPi(P, Q) \cap \Pcm = \left\{ 
		\pi \in \varPi(P, Q) : 
		\spt \pi \in \Fellcm
	\right\}
\]
be the set of couplings with cyclically monotone support. By Theorem~6 in \citet{McCann}, $\varPicm(P, Q)$ is not empty. Moreover, by Corollary~14 in the same article, $\varPicm(P, Q)$ is a singleton as soon as one of $P$ and $Q$ vanishes on all sets of Hausdorff dimension $d-1$. As discussed in the introduction, the interest in  $\varPicm(P, Q)$ stems from its link to optimal transport with squared Euclidean distance as cost.

This section collects some useful properties related to coupling measures whose support satisfies some monotonicity condition. The main result is Proposition~\ref{prop:pinTn} claiming the graphical convergence of maximal cyclically monotone mappings containing the supports of coupling measures relative to the interior of the support of the first margin of the limiting coupling measure. The proof of the results in this section are given in Appendix~\ref{app:coupling}.


\begin{lemma}[Weak closure and compactness properties of sets of coupling measures]
	\label{lem:Picmclosed}
	(a) The sets $\Pcm$ and $\varPicm(P, Q)$, for $P, Q \in \Prob(\Rd)$, are weakly closed in $\Prob(\Rd \times \Rd)$.
	
	(b) The sets $\{ (P, Q, \pi) : \pi \in \varPi(P, Q) \}$ and $\{ (P, Q, \pi) : \pi \in \varPicm(P, Q) \}$ are weakly closed in $\Prob(\Rd) \times \Prob(\Rd) \times \Prob(\Rd \times \Rd)$.
	
	(c) If $K, L \subset \Prob(\Rd)$ are weakly compact, then $\{ (P, Q, \pi) : P \in K, Q \in L, \pi \in \varPi(P, Q)\}$ is weakly compact too.
\end{lemma}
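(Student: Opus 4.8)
The plan is to prove the three parts in order, deriving (b) and (c) from (a) together with continuity of the marginal maps. The core of (a) is that cyclical monotonicity of the support is preserved under weak limits. First I would record the elementary fact that $\pi_n \wto \pi$ in $\Prob(\Rdd)$ forces $\spt \pi \subset \liminf_n \spt \pi_n$: if $(x,y) \in \spt \pi$, then every open ball $U$ about $(x,y)$ has $\pi(U) > 0$, so $\liminf_n \pi_n(U) \ge \pi(U) > 0$ by the portmanteau theorem, and since $\pi_n\bigl((\spt \pi_n)^{c}\bigr) = 0$ this gives $\spt \pi_n \cap U \ne \varnothing$ for all large $n$; a diagonal argument over balls of radius $1/m$ then yields points $(x_n, y_n) \in \spt \pi_n$ with $(x_n, y_n) \to (x, y)$. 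Granting this, to see that $\Pcm$ is weakly closed I take $\pi$ in its weak closure, pick $\pi_n \in \Pcm$ with $\pi_n \wto \pi$, fix any finite family $(x_1, y_1), \dots, (x_k, y_k) \in \spt \pi$, lift each to a sequence $(x_i^{(n)}, y_i^{(n)}) \in \spt \pi_n$ converging to $(x_i, y_i)$, write the defining inequality of Definition~\ref{def:mon} for $\spt \pi_n$, and let $n \to \infty$, using joint continuity of the inner product; this transfers the inequality to the original points, so $\spt \pi \in \Fellcm$. Since $\varPicm(P, Q) = \varPi(P, Q) \cap \Pcm$, it remains only to observe that $\varPi(P, Q)$ is weakly closed, which holds because the marginal maps $\pi \mapsto \pi(\point \times \Rd)$ and $\pi \mapsto \pi(\Rd \times \point)$ are pushforwards along the continuous coordinate projections, hence weakly continuous, so the constraints defining $\varPi(P,Q)$ are closed conditions.

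For (b), convergence in $\Prob(\Rd) \times \Prob(\Rd) \times \Prob(\Rdd)$ is componentwise weak convergence, so if $(P_n, Q_n, \pi_n) \to (P, Q, \pi)$ with $\pi_n \in \varPi(P_n, Q_n)$, then weak continuity of the marginal maps gives $\pi(\point \times \Rd) = \lim_n \pi_n(\point \times \Rd) = \lim_n P_n = P$ and, likewise, the second marginal of $\pi$ equals $Q$; hence $\{(P, Q, \pi) : \pi \in \varPi(P, Q)\}$ is weakly closed. Intersecting it with the closed set $\Prob(\Rd) \times \Prob(\Rd) \times \Pcm$ (closed by part (a)) shows $\{(P, Q, \pi) : \pi \in \varPicm(P, Q)\}$ is weakly closed too.

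For (c), since $\Rdd$ is Polish the three factor spaces are metrizable, so it suffices to show the set is closed and relatively compact. Closedness follows from (b) and the closedness of the compact sets $K$ and $L$. For relative compactness I would invoke Prokhorov's theorem: weak compactness of $K$ and of $L$ gives, for every $\eps > 0$, compact $A, B \subset \Rd$ with $\inf_{P \in K} P(A) \ge 1 - \eps/2$ and $\inf_{Q \in L} Q(B) \ge 1 - \eps/2$; then for any $P \in K$, $Q \in L$ and $\pi \in \varPi(P, Q)$ one has $\pi\bigl((A \times B)^{c}\bigr) \le P(A^{c}) + Q(B^{c}) \le \eps$ with $A \times B$ compact, so the family of all such $\pi$ is uniformly tight and hence relatively weakly compact. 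The set in (c) is therefore a closed subset of the weakly compact set $K \times L \times \mathcal{C}$, where $\mathcal{C}$ is the weak closure of $\{\pi : P \in K,\, Q \in L,\, \pi \in \varPi(P, Q)\}$, and thus is itself weakly compact. The only genuinely non-routine ingredients are the support-approximation step in (a) --- making rigorous that supports ``converge inward'' under weak convergence --- and the uniform-tightness estimate in (c); everything else is bookkeeping with weak continuity of marginals and an appeal to Prokhorov, and the cyclic convention in Definition~\ref{def:mon} plays no role since the relevant inequality passes to the limit regardless.
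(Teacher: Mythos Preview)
Your proof is correct and follows essentially the same route as the paper's: parts~(b) and~(c) are virtually identical, and in part~(a) the paper factors the argument through two abstract lemmas---lower semi-continuity of the support map $\spt : \Prob(\Rdd) \to \Fell(\Rdd)$ and the fact that the complement of $\Fellcm$ is a union of finite intersections of sets $\Fell_G$---while you unpack these directly by approximating points of $\spt\pi$ from $\spt\pi_n$ and passing the cyclical-monotonicity inequality to the limit. The content is the same; only the packaging differs.
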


If the support of a coupling measure is (cyclically) monotone, then it can be contained in more than one maximal (cyclically) monotone mappings. Still, certain parts of those mappings are uniquely defined. In the next lemma, maximal monotonicity is enough rather than maximal cyclical monotonicity.

\begin{lemma}[Uniqueness of the optimal mapping on the interior of the support]
	\label{lem:STV}	
	Let $\pi \in \varPi(P, Q)$ for $P, Q \in \Prob(\Rd)$ and let $S, T \in \Fellmm(\Rd \times \Rd)$ satisfy $\spt \pi \subset S \cap T$. Put  $V = \intr(\spt P)$ and $W = \intr(\spt Q)$. Then $V \subset \dom S \cap \dom T$ and $S \restr V = T \restr V$, as well as $W \subset \rge S \cap \rge T$ and $S^{-1} \restr W = T^{-1} \restr W$.
\end{lemma}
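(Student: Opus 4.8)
The plan is to prove first the ``first margin'' assertion, namely $V = \intr(\spt P) \subseteq \dom S \cap \dom T$ and $S \restr V = T \restr V$, and then to obtain the assertion about $W$, $\rge S$, $\rge T$ by symmetry. Indeed, the map $\iota : \Rdd \to \Rdd$, $\iota(x,y) = (y,x)$, is an involution that preserves the monotonicity inequality (and hence, being an involution, also preserves maximality), so it restricts to a bijection of $\Fellmm$ onto itself; the push-forward $\pi' := \iota_\# \pi$ lies in $\varPi(Q, P)$ with $\spt \pi' = (\spt\pi)^{-1} \subseteq S^{-1} \cap T^{-1}$. Applying the first assertion to $\pi'$, $S^{-1}$, $T^{-1}$ then gives $W = \intr(\spt Q) \subseteq \dom S^{-1} \cap \dom T^{-1} = \rge S \cap \rge T$ and $S^{-1}\restr W = T^{-1}\restr W$, as required.

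For the first assertion I would use two standard inputs. First, $\spt P \subseteq \cl(\proj_1(\spt\pi))$: if $x$ lies outside the right-hand side, some open neighbourhood $U$ of $x$ satisfies $(U \times \Rd) \cap \spt\pi = \varnothing$, hence $P(U) = \pi(U \times \Rd) = 0$ and $x \notin \spt P$. Second, classical facts about maximal monotone mappings on $\Rd$ (see \cite{A-A, Rockafellar-Wets}): $\cl(\dom S)$ is convex, $\intr(\cl(\dom S)) = \intr(\dom S)$, and $S$ is locally bounded at every point of $\intr(\dom S)$. Since $\proj_1(\spt\pi) \subseteq \dom S$, the first input gives $V \subseteq \cl(\dom S)$, and as $V$ is open, $V \subseteq \intr(\dom S) \subseteq \dom S$; likewise $V \subseteq \dom T$. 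One then upgrades this to $V \subseteq \proj_1(\spt\pi)$: for $z \in V$ pick $\xi_n \in \proj_1(\spt\pi)$ with $\xi_n \to z$; eventually $\xi_n \in V \subseteq \intr(\dom S)$, so local boundedness forces the points $\eta_n$ with $(\xi_n,\eta_n) \in \spt\pi \subseteq S$ to remain bounded, and any subsequential limit $y$ of $(\eta_n)_n$ yields $(z, y) \in \cl(\spt\pi) = \spt\pi$.

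The heart of the argument is then a three-point monotonicity computation establishing $S \restr V \subseteq T$; by symmetry this also gives $T \restr V \subseteq S$, hence $S \restr V = T \restr V$. Fix $(x, y) \in S$ with $x \in V$ and let $(\bar x, \bar y) \in T$ be arbitrary; since $T$ is maximal monotone it suffices to show $\inpr{y - \bar y, x - \bar x} \ge 0$, and one may assume $\bar x \ne x$. For $t \in (0,1)$ small enough the point $z_t := (1-t)x + t\bar x$ lies in the open set $V$, hence in $\proj_1(\spt\pi)$, so there is $y_t$ with $(z_t, y_t) \in \spt\pi \subseteq S \cap T$. Monotonicity of $S$ for the pair $(x,y),(z_t,y_t)$, together with $x - z_t = -t(\bar x - x)$, gives $\inpr{y - y_t, \bar x - x} \le 0$; monotonicity of $T$ for the pair $(\bar x,\bar y),(z_t,y_t)$, together with $\bar x - z_t = (1-t)(\bar x - x)$, gives $\inpr{\bar y - y_t, \bar x - x} \ge 0$. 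Combining, $\inpr{y - \bar y, \bar x - x} = \inpr{y - y_t, \bar x - x} - \inpr{\bar y - y_t, \bar x - x} \le 0$, i.e.\ $\inpr{y - \bar y, x - \bar x} \ge 0$.

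The only real delicacy is the domain bookkeeping in the second paragraph: it is not enough that $V \subseteq \dom S$, one needs the stronger $V \subseteq \proj_1(\spt\pi)$ so that the intermediate point $z_t$ genuinely carries a point of $\spt\pi$ lying in both $S$ and $T$. This is precisely where near-convexity of the domain and local boundedness of maximal monotone mappings enter. Everything else — the concluding inequality, and the reduction of the range/inverse part to the first part via $\iota$ — is routine.
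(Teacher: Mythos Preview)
Your proof is correct and follows a genuinely different route from the paper's. Both arguments obtain $V \subset \dom S \cap \dom T$ the same way (via near-convexity of the domain of a maximal monotone map), but for the identity $S\restr V = T\restr V$ the paper simply invokes Corollary~1.5 of \cite{A-A}: two maximal monotone mappings coincide on an open set as soon as $S(x)\cap T(x)\neq\emptyset$ on a dense subset, and density of $\proj_1(\spt\pi)$ in $V$ supplies the hypothesis. You instead give a self-contained argument: first you upgrade density to the inclusion $V\subset\proj_1(\spt\pi)$ using local boundedness of $S$ on $\intr(\dom S)$, and then you verify the maximality criterion for $T$ directly via a clean three-point monotonicity computation at the convex combination $z_t$. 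The paper's route is shorter if one is willing to quote the Alberti--Ambrosio corollary; yours avoids that black box and, as a byproduct, establishes $V\subset\proj_1(\spt\pi)$ --- which the paper explicitly notes (in the remark after its Lemma~\ref{lem:supp-proj}) can fail for general $\pi$, but which your argument shows does hold once $\spt\pi$ is contained in a maximal monotone set. The symmetry reduction for the range statement is the same in both proofs.
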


Applying the uniqueness property in Lemma~\ref{lem:STV} to the accumulation points of sequences of maximal cyclically monotone mappings allows to deduce their graphical convergence relative an open set.

\begin{proposition}[Graphical convergence of mapst from weak convergence of measures]
	\label{prop:pinTn}
	Let $\pi_n \in \Pcm(\Rdd)$ and $T_n \in \Fellmcm(\Rdd)$ satisfy $\spt \pi_n \subset T_n$ for all $n$.
	Suppose that $\pi_n \wto \pi \in \varPi(P, Q)$ as $n \to \infty$ for some $P, Q \in \Prob(\Rd)$.
	Then $\pi \in \Pcm(\Rdd)$ too and all accumulation points $T$ of $(T_n)_n$ in $\Fell(\Rdd)$ belong to $\Fellmcm(\Rdd)$, contain $\spt \pi$, coincide on $V = \intr(\spt P) \subset \dom T$, and are the graphical limit of $T_n$ relative to $V$, just like any other $T \in \Fellmcm(\Rdd)$ which contains $\spt \pi$.
\end{proposition}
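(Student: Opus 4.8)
The plan is to prove Proposition~\ref{prop:pinTn} by combining the weak-closure properties of $\Pcm$ and $\varPicm$ (Lemma~\ref{lem:Picmclosed}), the compactness of Fell space, the uniqueness statement of Lemma~\ref{lem:STV}, and the characterization of graphical convergence relative to an open set from Proposition~\ref{prop:Fell:graphconvrelV}. First I would observe that $\spt \pi_n \in \Fellcm$ for all $n$ by definition of $\Pcm$; since $\pi_n \wto \pi$, Lemma~\ref{lem:Picmclosed}(a) gives $\pi \in \Pcm$, so $\spt \pi \in \Fellcm$, and in particular $\pi \in \varPicm(P, Q)$ by Lemma~\ref{lem:Picmclosed}(b). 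Next, since $\Fell(\Rdd)$ is compact, the sequence $(T_n)_n$ has at least one accumulation point, and I would fix an arbitrary accumulation point $T$, say $T_{n} \to T$ along a subsequence $n \in N$.

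The second step is to show any such $T$ lies in $\Fellmcm$ and contains $\spt \pi$. Membership in $\Fellmcm$ follows because $\Fellmcm \cup \{\emptyset\}$ is closed in $\Fell(\Rdd)$ by Lemma~\ref{lem:m:closed}, and $T \ne \emptyset$ since $T \supset \spt \pi \ne \emptyset$ (this nonemptiness I get from the containment argument I describe now). To see $\spt \pi \subset T$: take $(x, y) \in \spt \pi$ and an arbitrary open neighbourhood $G$ of $(x, y)$; since $\pi_n \wto \pi$ and $(x,y)$ is in the support of $\pi$, we have $\pi_n(G) > 0$ for all large $n$, hence $\spt \pi_n \cap G \ne \emptyset$, hence $T_n \cap G \ne \emptyset$, for all large $n \in N$. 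By the hit-criterion for Fell convergence (every open set hit by the limit is eventually hit, and conversely any point all of whose neighbourhoods are eventually hit belongs to the limit — see Appendix~\ref{app:Fell}), this forces $(x, y) \in T$. Thus $\spt \pi \subset T$ for every accumulation point $T$, and the same inclusion holds trivially for any $T \in \Fellmcm$ that is assumed to contain $\spt \pi$.

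Now comes the identification of all these limits on $V = \intr(\spt P)$, which is where Lemma~\ref{lem:STV} does the work. Let $T$ and $T'$ be any two elements of $\Fellmcm$ — accumulation points of $(T_n)_n$ or otherwise — each containing $\spt \pi$; since $\Fellmcm \subset \Fellmm$, they are both maximal monotone, and $\spt \pi \subset T \cap T'$ with $\pi \in \varPi(P, Q)$, so Lemma~\ref{lem:STV} yields $V \subset \dom T \cap \dom T'$ and $T \restr V = T' \restr V$. In particular, fixing one reference $T_\ast \in \Fellmcm$ containing $\spt \pi$ (its existence is guaranteed by the accumulation point just produced, or alternatively by Rockafellar's theorem), every accumulation point $T$ satisfies $T \restr V = T_\ast \restr V$ and $V \subset \dom T$. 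The final step is then to invoke Proposition~\ref{prop:Fell:graphconvrelV} with limit $T_\ast$ (or any fixed such $T$): statement~(i) of that proposition — that every accumulation point of $(T_n)_n$ restricts on $V$ to the same thing — is exactly what we have just established, and its equivalence with~(iii) gives $T_n \Vto T_\ast$, i.e. $(T_n)_n$ converges graphically to $T_\ast$ relative to $V$; the last clause of that proposition ("any accumulation point can serve as the limit") lets us phrase this for any $T \in \Fellmcm$ containing $\spt \pi$, since all such $T$ agree on $V$. I expect the main obstacle to be bookkeeping rather than depth: one must be careful that the accumulation-point argument runs along a subsequence while the conclusion of Proposition~\ref{prop:Fell:graphconvrelV} is about the full sequence — but this is precisely the content of the equivalence (i)$\Leftrightarrow$(iii) there, so no extra work is needed. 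A secondary point requiring a line of care is confirming $T \ne \emptyset$ before quoting the closedness of $\Fellmcm \cup \{\emptyset\}$, which the containment $\spt\pi \subset T$ with $\spt\pi \ne \emptyset$ settles.
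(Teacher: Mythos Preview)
Your proposal is correct and follows essentially the same route as the paper: weak closedness of $\Pcm$ gives $\pi\in\Pcm$; every Fell accumulation point $T$ of $(T_n)_n$ contains $\spt\pi$ and hence is nonempty and lies in $\Fellmcm$ by Lemma~\ref{lem:m:closed}; Lemma~\ref{lem:STV} forces all such $T$ to agree on $V$; and Proposition~\ref{prop:Fell:graphconvrelV}(i)$\Leftrightarrow$(iii) converts this into $T_n\Vto T$. The only cosmetic difference is that the paper packages your hands-on argument for $\spt\pi\subset T$ into the chain $\spt\pi\subset\liminf_n\spt\pi_n\subset\liminf_n T_n$ via Lemmas~\ref{lem:sptlsc} and~\ref{lem:acc}, and your appeal to Lemma~\ref{lem:Picmclosed}(b) for $\pi\in\varPicm(P,Q)$ is superfluous since this is immediate from $\pi\in\Pcm\cap\varPi(P,Q)$.
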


\section{Proofs of main theorems}
\label{sec:proofs}


\begin{proof}[Proof of Theorem~\ref{thm}]

\noindent \textbf{(a)} By Prohorov's theorem, the weak convergences $P_n \wto P$ and $Q_n \wto Q$, and the inclusion $\pi_n \in \varPi(P_n, Q_n)$, the sequence $(\pi_n)_n$ is tight. Any subsequence therefore contains a further subsequence that converges weakly to some $\pi'$. By Lemma~\ref{lem:Picmclosed}(b), we have $\pi' \in \varPi(P, Q)$.
But the latter set is equal to the singleton $\{\pi\}$. It follows that $\pi_n \wto \pi$ as $n \to \infty$. Lemma~\ref{lem:supp-proj} yields $V = \intr(\spt P) \subset \dom T$ while Proposition~\ref{prop:pinTn} yields $T_n \Vto T$ as $n \to \infty$. 

\smallskip \noindent \textbf{(b)} By (a), we have $T_n \Vto T$ as $n \to \infty$. As $T$ is single-valued on the compact subset $K$ of the open subset $V$ of $\dom T$, the statement follows from Lemma~\ref{lem:TUB} and Propositions~\ref{prop:TmmKsingval} and~\ref{prop:graphHausdorff}. The last claim follows by taking $K = \{x\}$ for $x \in W$.

\smallskip \noindent \textbf{(c)} Since $U$ is an open subset of $V$, the convergence of $T_n \restr V$ to $T \restr V$ in $\Fell(V \times \Rd)$ implies the convergence of $T_n \restr U = (T_n \restr V) \restr U$ to $T \restr U = (T \restr V) \restr U$ in $\Fell(U \times \Rd)$; apply Proposition~\ref{prop:Etrace} to $\E = V \times \Rd$ and $\E' = U \times \Rd$, by which the map $\Fell(V \times \Rd) \to \Fell(U \times \Rd) : S \mapsto S \restr U$ is continuous. Next, apply Proposition~\ref{prop:partialcomplete}.

\smallskip \noindent \textbf{(d)} This is a consequence of convergence $T_n \to T$ as $n \to \infty$ in $\Fell(\Rdd)$ established in~(c) together with Theorem~\ref{thm:UCT}.	
%
%
%
%
\end{proof}

\begin{proof}[Proof of Theorem~\ref{thm:random}]
The proof consists of two parts, according to whether $P_n \wto P$ and $Q_n \wto Q$ as $n \to \infty$ almost surely or weakly.

\smallskip \noindent \textbf{Almost sure convergence.} 
The assumptions imply that there exists an event of probability one on which $P_n \wto P$ and $Q_n \wto Q$ as $n \to \infty$ as well as $\pi_n \in \varPicm(P_n, Q_n)$ and $\spt \pi_n \subset T_n$. For (c), we also need that, on the same event, we have $\rge T_n \subset \cl(\rge T)$ for all $n$. On this event, apply Theorem~\ref{thm} to conclude.

We just need to verify that the suprema in~(b) and~(d) are Borel measurable. But this is a consequence of Lemmas~\ref{lem:dHmeas} and~\ref{lem:measdH} and the fact that the maximal (cyclically) monotone mapping $T$ is continuous in a point as soon as it is single-valued in that point.

\smallskip \noindent \textbf{Weak convergence. (a)} We first show that the sequence of laws of the random triples $(P_n, Q_n, \pi_n)$ in $\Prob(\Rd) \times \Prob(\Rd) \times \Prob(\Rdd)$ is uniformly tight. Let $\eps > 0$. Since the laws of the random distributions $P_n$ and $Q_n$ converge in distribution in $\Prob(\Prob(\Rd))$, there exist compact sets $K, L \subset \Prob(\Rd)$ such that $\prob(P_n \in K) \ge 1  - \eps/2$ and $\prob(Q_n \in L) \ge 1 - \eps/2$ for all integer $n$. As $\pi_n \in \varPi(P_n, Q_n)$ for all $n$, Lemma~\ref{lem:Picmclosed}(c) then implies that for all $n$, the probability that the random triple $(P_n, Q_n, \pi_n)$ belongs to the compact set $\{ (P', Q', \pi') : P' \in K, Q' \in L, \pi' \in \varPi(P', Q')\}$ is at least $1-\eps$, as required.

Suppose that along some subsequence, $(P_n, Q_n, \pi_n)$ converges in distribution to the possibly random triple $(P', Q', \pi')$. Then, by assumption and the continuous mapping theorem, we have $P' = P$ and $Q' = P$ with probability one. The laws of $(P_n, Q_n, \pi_n)$ are concentrated on the set $\{(P_0, Q_0, \pi_0) : \pi_0 \in \varPi(P_0, Q_0)\}$, which is weakly closed in $\Prob(\Rd) \times \Prob(\Rd) \times \Prob(\Rdd)$ by Lemma~\ref{lem:Picmclosed}(b). The Portmanteau Theorem yields $\pi' \in \varPi(P, Q)$ with probability one. Now $\varPi(P, Q)$ is equal to the singleton $\{\pi\}$ by assumption, so that actually $(P', Q', \pi') = (P, Q, \pi)$ with probability one. Since this is true for any convergent subsequence of $(P_n, Q_n, \pi_n)$, we obtain that the whole sequence $(P_n, Q_n, \pi_n)$ converges in distribution to the degenerate law at $(P, Q, \pi)$.

Next, recall that Fell space $\Fell(\Rdd)$ is compact. 	Let $V = \intr(\spt P)$. To show that $T_n \Vto T$, we apply Proposition~\ref{prop:graphconvrelVweak}. Let $\hat{T}$ be the weak limit in $\Fell$ of $T_n$ as $n \to \infty$ in some infinite subset $N$ of $\NN$. Since $\pi_n \wto \pi$ in distribution, the limit being deterministic,\footnote{That is, a fixed $\pi \in \Prob(\Rd \times \Rd)$. As a probability measure on $\Rd \times \Rd$, $\pi$ itself is not necessarily degenerate in a single point.} we also have $(\pi_n, T_n) \to (\pi, \hat{T})$ in distribution as $n \to \infty$ in $N$. In view of Lemma~\ref{lem:sptlsc} applied to $\E = \Rd \times \Rd$ and thanks to the Portmanteau theorem, we have $\prob[\spt \pi \subset \hat{T}] = 1$.
By Lemma~\ref{lem:STV}, on an event with probability one, $\hat{T}$ coincides on $V$ with any other $T' \in \Fellmcm$ such that $\spt \pi \subset T'$, in particular with the given $T$. Since this is true for the limit in distribution of any converging subsequence, criterion~(ii) in Proposition~\ref{prop:graphconvrelVweak} is fulfilled. We conclude that $T_n \Vto T$ in distribution.

\smallskip \noindent \textbf{(b)} We prove the last statement, provided $T$ is single-valued on the whole of $K$. Fix $\eps > 0$. By Proposition~\ref{prop:TmmKsingval} there exists an open neighbourhood $\mathcal{G}$ of $T \restr V$ in $\Fell(V \times \Rd)$ such that for $T' \in \Fellmm$ with $T' \restr V \in \mathcal{G}$, we have $K \subset \dom T'$ and $\sup_{x \in K} \sup_{y \in T'(x)} |y - T(x)| \le \eps$. Since $T_n \Vto T$ as $n \to \infty$ in distribution, the Portmanteau Theorem yields that $\liminf_{n \to \infty} \prob(T_n \in \mathcal{G}) = 1$. But as $T_n \in \Fellmcm \subset \Fellmm$ with probability one, then also $\prob[\sup_{x \in K} \sup_{y \in T_n(x)} |y - T(x)| \le \eps] \to 1$ as $n \to \infty$. Since $\eps > 0$ was arbitrary, we conclude that $\sup_{x \in K} \sup_{y \in T_n(x)} |y - T(x)| \to 0$ in distribution as $n \to \infty$.

The proofs of the other statements follow in the same way from Lemma~\ref{lem:TUB} and Proposition~\ref{prop:graphHausdorff}.

\smallskip \noindent \textbf{(c)} As $U$ is an open subset of $V$, Proposition~\ref{prop:Etrace} implies that the restriction map $\Fell(V \times \Rd) \to \Fell(U \times \Rd) : F \mapsto F \cap (U \times \Rd)$ is continuous. Applying this map to $T_n \restr V$ and $T \restr V$ yields $T_n \restr U$ and $T \restr U$ respectively. By (a), the continuous mapping theorem yields $T_n \restr U \to \hat{T} \restr U$ as $n \to \infty$ in distribution in $\Fell(U \times \Rd)$. Let $\mathcal{G}$ be any neighbourhood of $T$ in $\Fell(\Rdd)$. By Proposition~\ref{prop:partialcomplete}, there exists a neighbourhood $\mathcal{H}$ of $T \restr U$ in $\Fell(U \times \Rd)$ such that for all $T' \in \Fellmcm$ with $\rge T' \subset \cl(\rge T)$ and $T' \restr U \in \mathcal{H}$, we have $T' \in \mathcal{G}$. By the Portmanteau Theorem, $\liminf_{n \to \infty} \prob(T_n \restr U \in \mathcal{H}) = 1$. But since also $T_n \in \Fellmcm$ and $\rge T_n \subset \cl(\rge T)$ with probability one, we find $\lim_{n \to \infty} \prob(T_n \in \mathcal{G}) = 1$. As this holds true for any neighbourhood of $T$ in $\Fell(\Rdd)$, we conclude that $T_n \to T$ as $n \in \infty$ in $\Fell(\Rdd)$.

\smallskip \noindent \textbf{(d)} Let $\eps > 0$. By Theorem~\ref{thm:UCT}, there exists a neighbourhood $\mathcal{G}$ of $T$ in $\Fell(\Rdd)$ such that for any $T' \in \mathcal{G} \cap \Fellmm$ for which $\rge T' \subset \cl(\rge T)$, we have $\sup_{x \in E} \sup_{y \in T'(x)} |y-T(x)| \le \eps$. By~(c) and the Portmanteau Theorem, we have $\liminf_{n \to \infty} \prob(T_n \in \mathcal{G}) = 1$. Since also $T_n \subset \Fellmcm \subset \Fellmm$ and $\rge T_n \subset \cl(\rge T)$ with probability one, we find that $\prob(Z_n \le \eps) \to 1$ as $n \to \infty$, where $Z_n = \sup_{x \in E} \sup_{y \in T_n(x)} |y - T(x)|$. But $\eps > 0$ was arbitrary, and thus $Z_n \to 0$ as $n \to \infty$ in distribution.
\end{proof}


\begin{appendix}

\section{Proofs for Section~\ref{sec:mmm}}
\label{app:mmm}

\begin{proof}[Proof of Lemma~\ref{lem:oscK}]
	Put $M = \rho \ball_\ell \setminus [T(A) + \eps \oball_\ell]$, a closed and therefore compact subset of $\rho \ball_\ell$. For every $x \in A$ and $y \in M$, we have $(x, y) \not\in T$ and thus, as $T^c$ is open, $[(x, y) + \eta_{x,y} (\ball_k \times \ball_\ell)] \cap T = \varnothing$ for some $\eta_{x,y} > 0$. For fixed $x \in A$, the open balls $y + \eta_{x,y} \oball_\ell$ with $y \in M$ cover the compact set $M$, so that we can find a finite set $M_x \subset M$ such that the balls $y' + \eta_{x, y'} \oball_\ell$ with $y' \in M_x$ still cover $M$. Let $\eta_{x} = \min_{y \in M_x} \eta_{x,y}$. For every $y \in M$, we can find $y' \in M_x$ such that $y \in y' + \eta_{x, y'} \oball_\ell$ and thus 
	\[ 
	(x + \eta_x \oball_k) \times \{y\} 
	\subset [(x, y) + \eta_{x,y} (\oball_k \times \oball_\ell)] 
	\subset T^c. 
	\]
	As a consequence, $(x + \eta_x \oball_k) \times M \subset T^c$ for all $x \in A$. Define $U = \bigcup_{x \in A} (x + \eta_x \oball_k)$, an open set containing $A$. Then $U \times M \subset T^c$ and thus, by definition of $M$,
	\[ 
	T(U) \cap \rho \ball_\ell 
	\subset \rho \ball_\ell \setminus M 
	\subset T(A) + \eps \oball_\ell. 
	\qedhere
	\]
\end{proof}

\begin{proof}[Proof of Lemma~\ref{cor:Felluniform}]
	By Lemma~\ref{lem:Fellnbd} applied to $\E = V \times \Rl$ and the compact set $K \times \rho \ball_\ell$, there exists an open neighbourhood $\mathcal{G}$ of $T \restr V$ such that, for any $T_1,T_2 \in \Fell(\Rkl)$ satisfying $T_j \restr V \in \mathcal{G}$ for $j \in 1, 2$, we have
	\begin{equation*}
	T_1 \cap (K \times \rho \ball_\ell)
	= (T_1 \restr V) \cap (K \times \rho \ball_\ell) 
	\subset (T_2 \restr V) + \eps(\ball_k \times \ball_\ell)
	\subset T_2 + \eps(\ball_k \times \ball_\ell).
	\end{equation*}
	
	Let $A \subset K$ and let $y \in T_1(A) \cap \rho \ball_\ell$; we will show that $y \in T_2(A + \eps \ball_k) + \eps \ball_\ell$. 
	There exists $x \in A$ such that 
	\[
	(x, y) \in T_1 \cap (A \times \rho \ball_\ell) 
	\subset T_2 + \eps(\ball_k \times \ball_\ell).
	\]
	As a consequence, there exists $(x', y') \in T_2$ such that $|x' - x| \le \eps$ and $|y' - y| \le \eps$. But then, as required,
	\[
	y \in y' + \eps \ball_\ell
	\subset T_2(x') + \eps \ball_\ell 
	\subset T_2(x + \eps \ball_k) + \eps \ball_\ell
	\subset T_2(A + \eps \ball_k) + \eps \ball_\ell.
	\qedhere
	\]	
\end{proof}

\section{Proofs for Section~\ref{sec:convmon}}
\label{app:convmon}

\begin{proof}[Proof of Lemma~\ref{lem:m:closed}]	
	We show first that the complement of $\Fellcm$ is of the stated form and thus open. Suppose that $T \in \Fell(\Rd \times \Rd)$ is not cyclically monotone. Then we can find points $(x_1, y_1), \ldots, (x_n, y_n)$ in $T$ such that $\sum_{i=1}^n \inpr{x_i, y_i} < \sum_{i=1}^n \inpr{x_i, y_{i+1}}$, with $y_{n+1} := y_1$. By continuity of the scalar product, there exists for each $i \in \{1,\ldots,n\}$ a neighbourhood $G_i$ of $(x_i, y_i)$ such that $\sum_{i=1}^n \inpr{\bar{x}_i, \bar{y}_i} < \sum_{i=1}^n \inpr{\bar{x}_i, \bar{y}_{i+1}}$ for every choice of $((\bar{x}_1, \bar{y}_1), \ldots, (\bar{x}_n, \bar{y}_n))$ in $G_1 \times \cdots \times G_n$, with $\bar{y}_{n+1} := \bar{y}_1$. The set 
	\[ 
	\left\{ 
	T' \in \Fell(\Rd \times \Rd) : 
	\forall i = 1, \ldots, n, \ T' \cap G_i \ne \emptyset 
	\right\} 
	= \Fell_{G_1} \cap \cdots \cap \Fell_{G_n}
	\]
	is open in the Fell topology, includes $T$, and is, by construction, disjoint from $\Fellcm$. It follows that the complement of $\Fellcm$ is a union of such intersections and thus open in the Fell topology.
	
	The case $n = 2$ in the previous argument shows that the set $\Fellm$ is closed in Fell space and has a complement of the stated form too.
	
	By Theorem~12.32 and Corollary~12.33 in \citet{Rockafellar-Wets}, the set $\Fellmm \cup \{\emptyset\}$ is closed in Fell space. In the cited Theorem~12.32, the condition that the limit $T$ be non-empty seems missing. Alternatively, the property is stated in Proposition~1.7 in \citet{A-A}.
	
	The empty mapping is cyclically monotone for trivial reasons, while a cyclically monotone subset is maximal monotone if and only if it is maximal cyclically monotone \citep[Theorem~12.25 and the paragraph right before]{Rockafellar-Wets}. It follows that
	\begin{align*}
	\Fellmcm \cup \{\varnothing\} 
	= (\Fellcm \cap \Fellmm) \cup (\Fellcm \cap \{\varnothing\}) 
	= \Fellcm \cap (\Fellmm \cup \{\varnothing\}),
	\end{align*}
	the intersection of two closed subsets in Fell space.
\end{proof}

\begin{proof}[Proof of Proposition~\ref{prop:unique-extension}]
	Since $T^{-1}$ is maximal (cyclically) monotone too, the set $\rge T = \dom T^{-1}$ is nearly convex, that is, there is a convex set $C$ such that $C \subset \rge T \subset \cl C$ \citep[Theorem~12.41]{Rockafellar-Wets}. As $\rge T$ is bounded, we have $\dom T= \Rd$; similarly for any $S \in \Fellmcm$ such that $\rge S \subset \cl(\rge T)$ \citep[Corollary~12.38(a)]{Rockafellar-Wets}.
	
	Let $S \in \Fellmcm$ satisfy $\rge S \subset \cl(\rge T)$ and $S \restr V = T \restr V$. For $y \in T(V)$, there exists $x \in V$ such that $y \in T(x) = S(x)$, so that $x \in T^{-1}(y) \cap S^{-1}(x) \ne \emptyset$. Since $T(V)$ is dense in $\rge T$, it follows by \citet[Corollary~1.5]{A-A} that $T^{-1}(y) = S^{-1}(y)$ for all $y \in \intr(\rge T)$. Moreover, for $y \not\in \cl(\rge T)$ we have $T^{-1}(y) = S^{-1}(y) = \emptyset$.
	
	By Rockafellar's theorem \citep[Theorem~24.8]{Rockafellar}, there exist proper closed convex functions $\psi$ and $\phi$ such that $T^{-1} = \partial \psi$ and $S^{-1} = \partial \phi$. The domains of $\psi$ and $\phi$ contain $\intr(\rge T)$ and are contained in $\cl(\rge T)$. By the previous paragraphs, $\partial \psi$ and $\partial \phi$ coincide on $\intr(\rge T)$. It follows that there exists $c \in \reals$ such that $\phi = \psi + c$ on $\intr(\rge T)$. We may ensure that $c = 0$ by applying a vertical shift, an operation that does not change the subdifferentials. Moreover, $\psi$ and $\phi$ are infinite outside $\cl(\rge T)$. As $\psi$ and $\phi$ are lower semi-continuous, they must coincide everywhere \citep[Corollary~7.3.4]{Rockafellar}; note that the relative interiors of the domains of $\psi$ and $\phi$ coincide and are equal to $\intr(\rge T)$. But then $\psi = \phi$ and thus $T = S$.
\end{proof}

\begin{proof}[Proof of Lemma~\ref{lem:TUB}]
	As $V$ is open, the restriction map $S \mapsto S \restr V : \Fell(\Rdd) \to \Fell(V \times \Rd)$ is continuous by Proposition~\ref{prop:Etrace} applied to $\E = \Rdd$ and $\E' = V \times \Rd$. If $\mathcal{G}$ is an open neighbourhood of $T \restr V$ in $\Fell(V \times \Rd)$, the collection $\{ T' \in \Fell(\Rdd) : T' \restr V \in \mathcal{G} \}$ is thus an open neighbourhood of $T$ in $\Fell(\Rdd)$.
	
	Since $K$ is compact, $V$ is open, and $K$ is contained in $V$, we can find $\eps > 0$ and a finite set $X \subset V$ such that $X + \eps \oball \subset V$, $K \subset \intr(\con X)$, and $K + \eps \ball \subset \con X'$ for every set $X' \subset \Rd$ that intersects every ball $x + \eps \ball$ for $x \in X$. Define
	\[
	\mathcal{G} = \bigcap_{x \in X} \Fell_{G_x}(V \times \Rd) 
	\quad \text{where} \quad
	G_x = (x, T(x)) + \eps \oball \times \eps \oball.
	\]
	Clearly, $\mathcal{G}$ is an open neighbourhood of $T \restr V$ in $\Fell(V \times \Rd)$. Let $T' \in \Fellmm$ be such that $T' \restr V \in \mathcal{G}$.
	For every $x \in X$, choose one point $(x', y') \in T' \cap G_x$; note that $|x' - x| < \eps$ and $|y' - y| < \eps$ for some $y \in T(x)$. Let $X' \subset \dom T'$ be the (finite) collection of points $x'$ thus obtained; similarly, let $Y'$ be the collection of points $y'$ thus obtained. Since $X'$ intersects every ball $x + \eps \ball$, we have $K + \eps \ball \subset \con X'$; in particular, $d(\bar{x}, \reals^d \setminus \con X') \ge \eps$ for every $\bar{x} \in K$. Since $X$ is a finite and thus compact subset of $\intr(\dom T)$, the image $T(X)$ is bounded \cite[Corollary~1.3(3)]{A-A}. As a consequence, there exists $\rho_1$ not depending on $T'$ such that $|y'| \le \rho_1$. By Proposition~2.1(4) in \cite{A-A}, it follows that for every $\bar{x} \in K$ and any $\bar{y} \in T'(\bar{x})$, the norm $|\bar{y}|$ is bounded by a constant that does not depend on $T'$; specifically,
	\[
	|\bar{y}| \le \frac{\max\{ |y'| : y' \in Y' \} \cdot \operatorname{diam}(X')}{d(\bar{x}, \Rd \setminus \con X')}
	\le \frac{\rho_1 \cdot (\operatorname{diam}(X) + 2\eps)}{\eps}.
	\]
	To conclude the proof, set $U = K + \eps \oball$, which is a subset of $\intr(\con X')$ and thus of $\intr(\dom T')$ by \citet[Corollary~1.3(2)]{A-A}, and set $B = \rho \ball$ with $\rho$ equal to the right-hand side of the previous inequality.
\end{proof}

\begin{proof}[Proof of Lemma~\ref{lem:TmmKdeltaB}]
	By Lemma~\ref{lem:oscK}, there exists $\eta > 0$ such that $T(K + 2 \eta \ball) \subset T(K) + (\eps/2) \ball$; note that $T(K + 2 \eta \ball)$ is bounded when is $\eta > 0$ sufficiently small so that $K + 2 \eta \ball \subset \intr(\dom T)$ \cite[Corollary~1.3(3)]{A-A}. Decreasing $\eta > 0$ if needed, we can ensure $\eta \le \eps/2$ and that the compact set $K + 2 \eta \ball$ is contained in $V$.
	
	By Lemma~\ref{lem:TUB} applied to $K + 2 \eta \ball$, there exists an open set $U \subset V$ containing $K + 2 \eta \ball$, a bounded set $B \subset \Rd$ and an open neighbourhood $\mathcal{G}_0$ of $T \restr V$ in $\Fell(V \times \Rd)$ such that every $T' \in \Fellmm$ with $T' \restr V \in \mathcal{G}_0$ satisfies $U \subset \dom T'$ and $T'(K + 2 \eta \ball) \subset B$.
	
	Let $\rho > 0$ be such that $B \subset \rho \ball$ and let $\delta \in (0, \eta]$. By Lemma~\ref{cor:Felluniform} applied to $K + \eta \ball$, there exists an open neighbourhood $\mathcal{G}_1$ of $T \restr V$ in $\Fell(V \times \Rd)$ such that for all $T_1,T_2 \in \Fell(\Rdd)$ with $T_j \restr V \in \mathcal{G}_1$ for $j = 1, 2$ we have, for all $A \subset K + \eta \ball$, the containment $T_1(A) \cap \rho \ball \subset T_2(A + \delta \ball) + \delta \ball$.
	
	Define $\mathcal{G} = \mathcal{G}_0 \cap \mathcal{G}_1$ and let $T' \in \Fellmm$ be such that $T' \restr V \in \mathcal{G}$. From $T' \restr V \in \mathcal{G}_0$, we obtain $K + \delta \ball \subset U \subset \dom T'$ and $T'(K + \delta \ball) \subset B \subset \rho \ball$. From $T' \restr V \in \mathcal{G}_1$ and $\delta \le \eta \le \eps/2$ we deduce, on the one hand,
	\[
	T'(K + \delta \ball) \subset T(K + 2\delta \ball) + \delta \ball \subset T(K) + (\delta + \eps/2) \ball \subset T(K) + \eps \ball,
	\]
	and on the other hand,
	\[
	T(K) \subset T'(K + \delta \ball) + \delta \ball \subset T'(K + \delta \ball) + \eps \ball.
	\]
	It follows that $d_H\bigl(T'(K + \delta \ball), T(K)\bigr) \le \eps$, as required.
\end{proof}

\begin{proof}[Proof of Proposition~\ref{prop:TmmKsingval}]
	For every $x \in K$, Lemma~\ref{lem:TmmKdeltaB} applied to the compact set $\{x\}$ guarantees there exists $\delta_x \in (0, \eps)$ and a neighbourhood $\mathcal{G}_x$ of $T \restr V$ in $\Fell(V \times \Rd)$ such that for any $T' \in \Fellmm$ with $T' \restr V \in \mathcal{G}_x$ we have $x + \delta_x \ball \subset \dom T'$ and
	\[
	d_H \bigl( T'(x + \delta_x \ball), T(x) \bigr) \le \eps/2.
	\]
	By reducing $\delta_x$ if necessary, we may, by Lemma~\ref{lem:oscK},  ensure that 
	\[
	T(x + \delta_x \ball) \subset T(x) + (\eps/2) \ball.
	\]
	Also for this reduced $\delta_x$, we have
	\[
	T'(x + \delta_x \ball) \subset T(x) + (\eps/2) \ball.
	\]
	The compact set $K$ is covered by the union of the open balls $x + \delta_x \oball$ for $x \in K$. Let $Z \subset K$ be a finite set such that $K$ is already covered by the union of the balls $z + \delta_z \ball$ for $z \in Z$. Put $\mathcal{G} = \bigcap_{z \in Z} \mathcal{G}_z$.
	
	Let $T' \in \Fellmm$ be such that $T' \restr V \subset \mathcal{G}$. Then $K \subset \bigcup_{z \in Z} (z + \delta_z \ball) \subset \dom T'$. For any $x \in K$, we can find $z \in Z$ such that $x \in z + \delta_z \ball$ and thus
	\begin{align*}
	T(x) &\in T(z + \delta_z \ball) \subset T(z) + (\eps/2) \ball, \\
	T'(x) &\subset T'(z + \delta_z \ball) \subset T(z) + (\eps/2) \ball.
	\end{align*}
	Since $T(z)$ is a singleton, both $T(x)$ and $T'(x)$ are contained in the same ball with radius $\eps/2$. It follows that the maximal distance between $T(x)$ and a point in $T'(x)$ is not larger than $\eps$.
\end{proof}

\begin{proof}[Proof of Proposition~\ref{prop:graphHausdorff}]
	We apply Lemma~\ref{lem:TmmKdeltaB} twice, to $K$ and to $\partial K$. We obtain, for sufficiently small $\delta > 0$, an open neighbourhood $\mathcal{G}_1$ of $T \restr V$ in $\Fell(V \times \Rd)$ such that for all $T' \in \Fellmm$ with $T' \restr V \in \mathcal{G}_1$, we have $K + \delta \ball \subset \dom T'$ as well as
	\[
	\max\left\{
	d_H \bigl( T'(K + \delta \ball), T(K) \bigr), \;
	d_H \bigl( T'(\partial K + \delta \ball), T(\partial K) \bigr) 
	\right\} 
	\le \eps.
	\]
	
	Further, we apply Proposition~\ref{prop:TmmKsingval} to $\partial K$, on which $T$ is single-valued. We obtain a neighbourhood $\mathcal{G}_2$ of $T \restr V$ in $\Fell(V \times \Rd)$ such that for all $T' \in \Fellmm$ with $T' \restr V \in \mathcal{G}_2$, we have $\partial K \subset \dom T'$ as well as
	\[
	d_H \bigl( T'(\partial K), \; T(\partial K) \bigr) \le \eps.
	\]
	
	Let $\mathcal{G} = \mathcal{G}_1 \cap \mathcal{G}_2$. For $T' \in \Fellmm$ such that $T' \restr V \in \mathcal{G}$, we find, on the one hand,
	\[
	T'(K) \subset T'(K + \delta \ball) \subset T(K) + \eps \ball
	\]
	and, on the other hand, since\footnote{For $y \in (K + \delta \ball) \setminus K$, let $x \in K$ be such that $|x - y| = d(y, K) \le \delta$. Then necessarily $x \in \partial K$, so $y \in \partial K + \delta \ball$.} $K + \delta \ball = K \cup (\partial K + \delta \ball)$,
	\begin{align*}
	T(K) 
	\subset T'(K + \delta \ball) + \eps \ball 
	&= [T'(K) \cup T'(\partial K + \delta \ball)] + \eps \ball \\
	&= [T'(K) + \eps \ball] \cup [T'(\partial K + \delta \ball) + \eps \ball] \\
	&\subset [T'(K) + \eps \ball] \cup [T(\partial K) + 2 \eps \ball] \\
	&\subset [T'(K) + \eps \ball] \cup [T'(\partial K) + 3 \eps \ball] 
	\subset T'(K) + 3 \eps \ball.
	\end{align*}	
	Replace $\eps$ by $\eps/3$ to obtain the stated claim.
\end{proof}

\begin{proof}[Proof of Proposition~\ref{prop:partialcomplete}]
	Suppose the claim is false. Then there exists a neighbourhood $\mathcal{G}$ of $T$ in $\Fell(\Rdd)$ such that, for any neighbourhood $\mathcal{H}$ of $T \restr U$ in $\Fell(U \times \Rd)$, we can find $T' \in \Fellmcm$ with $\rge T' \subset \cl(\rge T)$ and $T' \restr U \in \mathcal{H}$ but still $T' \not\in \mathcal{G}$. This implies we can find a sequence $(T_n)_n$ in $\Fellmcm$ with $\rge T_n \subset \cl(\rge T)$ for any $n$ and $T_n \restr U \to T \restr U$ as $n \to \infty$ but still $T_n \not\in \mathcal{G}$ for any $n$. Since $T(U)$ is not empty, it follows that no subsequence of $T_n \restr U$ converges to the empty set and therefore neither does any subsequence of $T_n$. Let $S$ be an accumulation point of $T_n$ in $\Fell(\Rdd)$. Then $S$ is not the empty mapping, and thus $S \in \Fellmcm$ (Lemma~\ref{lem:m:closed}). Moreover, $\rge S \subset \liminf \cl(\rge T_n) \subset \cl(\rge T)$ (Lemma~\ref{lem:domrgelsc}) and $S \restr U = T \restr U$. By Proposition~\ref{prop:unique-extension}, we conclude that $S = T$. But this is in contradiction to the starting assumption that $T_n$ is not included in the neighbourhood $\mathcal{G}$ of $T$.
\end{proof}

\begin{lemma}[Support functions]
	\label{lem:support}
	Let $C \subset \Rd$ be convex, compact, and non-empty. Let $\sigma_C(u) = \sup_{c \in C} \inpr{u, c}$, for $u \in \Rd$, be the support function of $C$.
	\begin{itemize} 
		\item[(a)] The subdifferential of the convex function $\sigma_C$ is $		\partial \sigma_C(u) = \argmax_{c \in C} \inpr{u, c}$ for $u \in \Rd$. It is continuous in a point as soon as it is single-valued in that point.
		\item[(b)]
		Let the non-empty set $U \subset \Rd$ be compact and such that $\partial \sigma_C$ is single-valued on $U$. For every $\eps > 0$ there exists $\delta > 0$ such that for any $(u, y) \in U \times C$, if $\inpr{u, y} > \sigma_C(u) - \delta$ then $|y - \partial \sigma_C(u)| < \eps$.
	\end{itemize}
\end{lemma}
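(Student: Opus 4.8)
The plan is to handle the two parts in sequence: part (a) is the classical identification of the subdifferential of a support function together with the differentiability-implies-continuity fact for finite convex functions, and part (b) is then a routine sequential-compactness argument resting on (a).

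\textbf{Part (a).} Since $C$ is compact, $\sigma_C$ is finite, convex, positively homogeneous, and Lipschitz on $\Rd$ with constant $\sup_{c \in C} |c|$. I would first verify $\partial \sigma_C(u) = \argmax_{c \in C} \inpr{u, c}$ directly from the subgradient inequality. If $y \in C$ attains the maximum of $\inpr{u, \cdot}$ over $C$, then for every $u' \in \Rd$ we get $\sigma_C(u') \ge \inpr{u', y} = \sigma_C(u) + \inpr{y, u'-u}$, so $y \in \partial \sigma_C(u)$. Conversely, let $y \in \partial \sigma_C(u)$. Testing the subgradient inequality at $u' = 0$ and at $u' = 2u$, and using $\sigma_C(0) = 0$ and $\sigma_C(2u) = 2\sigma_C(u)$, yields $\inpr{y, u} = \sigma_C(u)$; testing at $u' = u + w$ for arbitrary $w \in \Rd$ and using subadditivity $\sigma_C(u+w) \le \sigma_C(u) + \sigma_C(w)$ gives $\inpr{y, w} \le \sigma_C(w)$ for all $w$, i.e.\ $y$ lies in every closed half-space containing $C$, hence $y \in C$; thus $y \in \argmax_{c \in C} \inpr{u, c}$. (Equivalently, $\sigma_C$ is the conjugate of the indicator of $C$, so $y \in \partial \sigma_C(u)$ iff $u \in \partial \delta_C(y)$ iff $y \in C$ and $\inpr{u, y} = \sigma_C(u)$; cf.\ \citet{Rockafellar}.) For the continuity claim, observe that $\sigma_C$ is a finite convex function on all of $\Rd$, so $u_0$ is a point of single-valuedness of $\partial \sigma_C$ exactly when $\sigma_C$ is differentiable at $u_0$. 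At such a point $\partial \sigma_C$ is locally bounded (as $u_0$ is interior to $\dom \sigma_C = \Rd$) and, being maximal monotone, has closed graph and is outer semicontinuous; hence for any $u_n \to u_0$ every selection $y_n \in \partial \sigma_C(u_n)$ is bounded and each of its cluster points lies in $\partial \sigma_C(u_0) = \{\nabla \sigma_C(u_0)\}$, so $y_n \to \nabla \sigma_C(u_0)$, which is the asserted continuity.

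\textbf{Part (b).} I would argue by contradiction. If the claim fails for some $\eps > 0$, there are $u_n \in U$ and $y_n \in C$ with $\inpr{u_n, y_n} > \sigma_C(u_n) - 1/n$ but $|y_n - \partial \sigma_C(u_n)| \ge \eps$ for all $n$ (the point $\partial \sigma_C(u_n)$ being unique since $u_n \in U$). By compactness of $U$ and of $C$, pass to a subsequence along which $u_n \to u_* \in U$ and $y_n \to y_* \in C$. Letting $n \to \infty$, the Lipschitz continuity of $\sigma_C$ and continuity of the inner product give $\inpr{u_*, y_*} \ge \sigma_C(u_*)$; since $y_* \in C$, this is an equality, so $y_* \in \argmax_{c \in C} \inpr{u_*, c} = \partial \sigma_C(u_*)$, which by the hypothesis on $U$ is a singleton, whence $y_* = \partial \sigma_C(u_*)$. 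On the other hand, the continuity established in (a), applied at $u_* \in U$, gives $\partial \sigma_C(u_n) \to \partial \sigma_C(u_*) = y_*$. Combining, $|y_n - \partial \sigma_C(u_n)| \to |y_* - y_*| = 0$, contradicting $|y_n - \partial \sigma_C(u_n)| \ge \eps$.

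\textbf{Main obstacle.} No step is deep. The only genuinely convex-analytic input is the continuity assertion in (a)---that single-valuedness of the subdifferential at a point forces continuity of the subdifferential mapping there---which feeds the limit argument $\partial \sigma_C(u_n) \to \partial \sigma_C(u_*)$ in (b); after that, (b) is a diagonal extraction using compactness of both $U$ and $C$. I would take care that the convention ``$\partial \sigma_C(u)$ denotes the unique element'' is applied consistently wherever $u$ ranges over $U$, and that the upper semicontinuity-type inequality $\inpr{u_*, y_*} \ge \sigma_C(u_*)$ is obtained by passing to the limit in the strict inequality $\inpr{u_n, y_n} > \sigma_C(u_n) - 1/n$.
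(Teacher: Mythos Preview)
Your proof is correct and follows essentially the same approach as the paper. For (a) the paper simply cites standard references (Rockafellar--Wets for the subdifferential identity and \citet{A-A} for continuity of a single-valued maximal monotone mapping), while you spell the argument out; for (b) the paper phrases the same compactness idea directly---defining $f(u,y)=\sigma_C(u)-\inpr{u,y}$ on $U\times C$, observing that the closed set $K_\eps=\{(u,y):|y-\partial\sigma_C(u)|\ge\eps\}$ is compact by the continuity from (a), and taking $\delta=\min_{K_\eps} f>0$---whereas your sequential contradiction is the standard equivalent formulation of the same argument.
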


\begin{proof}
	(a) The identity for $\partial \sigma_C$ can be found for instance in \citet[Corollary~8.25]{Rockafellar-Wets}. Since the multivalued mapping $\partial \sigma_C$ is maximal (cyclically) monotone, it is continuous as soon as it is single-valued \citep[Corollary~1.3(4)]{A-A}. 
	
	(b) 
	%
	%
	The function $f : U \times C \to \reals : (u, y) \mapsto f(u, y) = \sigma_C(u) - \inpr{u, y}$ is continuous. As $\partial \sigma_C$ is single-valued and thus continuous on $U$, the set $K_\eps = \{ (u, y) \in U \times C : |y - \partial \sigma_C(u)| \ge \eps \}$, for $\eps > 0$, is closed and thus compact. The function $f$ is strictly positive on $K_\eps$, so that its infimum on $K_\eps$, say $\delta$, is strictly positive too. It follows that for any $(u, y) \in U \times C$, if $\inpr{u, y} > \sigma_C(u) - \delta$ then $f(u, y) < \delta$ and thus $(u, y) \in (U \times C) \setminus K_\eps$, that is, $|y - \partial \sigma_C(u)| < \eps$.
\end{proof}

\begin{proof}[Proof of Theorem~\ref{thm:UCT}]
	If $E$ is bounded, then the existence of a neighbourhood $\mathcal{G}$ with the required properties follows from Proposition~\ref{prop:TmmKsingval} with $V = \Rd = \dom T$ and $K = \cl E$. So suppose $E$ is unbounded.
	
	Put $U = \hzn E$. By Lemma~\ref{lem:support}, there exists $\delta > 0$ so that for any $(u, y) \in U \times C$, if $\inpr{u,y} > \sigma_C(u) - \delta$ then $|y - \partial \sigma_C(u)| < \eps/2$, where $\partial \sigma_C(u) = \argmax_{c \in C} \inpr{u, c}$.
	
	Put $\eta = \delta / 4$. The open balls in $\Rd$ with radius $\eta$ and centres in $\rge T$ cover $C = \cl(\rge T)$. Let $W$ be a finite subset of $\rge T$ such that the open balls of radius $\eta$ and centres in $W$ still cover $C$. For every $w \in W$, choose $v_w \in T^{-1}(w)$. For $w \in W$, let $B(w)$ be the open ball in $\Rdd$ with centre $(v_w, w)$ and radius $\eta$. The graph of $T$ hits $B(w)$ for every $w \in W$. Any $T' \in \Fell(\Rdd)$ in the open Fell neighbourhood $\mathcal{G}' = \bigcap_{w \in W} \Fell_{B(w)}$ of $T$ hits all those balls too. Put $\sigma = \max \{ \norm{v_w} : w \in W \} + \eta$. 
	
	Put $\gamma = \operatorname{diam}(C) = \sup \{ |c_1-c_2| : c_1, c_2 \in C\}$. If $\gamma = 0$ then $C$ is a singleton and there is nothing to prove. So suppose $\gamma > 0$ and put $\tau = \delta / (4 \gamma)$. There exists $\rho > 4 \gamma \sigma / \delta$ such that any $x \in E \setminus \rho \ball$ satisfies $d(x / |x|, U) < \tau$, that is, $|x / |x| - u | < \tau$ for some $u \in U$ depending on $x$.\footnote{Otherwise, there would exist points $x_n \in E$ with $x_n/|x_n| \to \infty$ such that $d(x_n / |x_n|, U) \ge \tau$ for all $n$. The accumulation points of $(x_n / |x_n|)_n$ would not be included in $U = \hzn E$, a contradiction.}
	
	Take any $T' \in \Fellmm \cap \mathcal{G}'$ such that $\rge T' \subset C$. For each $w \in W$, let $(v_w', c_w')$ be a point in the intersection of the graph of $T'$ with $B(w)$. We have $|v_w'| \le \norm{v_w} + \eta \le \sigma$ for any $w \in W$.
	
	Let $x \in E \setminus \rho \ball$. There exists $u_x \in U$ with $|x/|x| - u_x| < \tau$. For any $y \in T'(x)$ and any $w \in W$, we have, by monotonicity of $T'$ and by the Cauchy--Schwarz inequality,
	\begin{align*}
	\inpr{y - c_w', u_x}
	&= \inpr{y - c_w', x - v_w'} / \norm{x} 
	+ \inpr{y - c_w', v_w'/\norm{x} - x/\norm{x} + u_x} \\
	&\ge 0 - |y - c_w'| \cdot (|v_w'|/{\norm{x}} + \norm{x/\norm{x} - u_x})
	> - \gamma \cdot (\sigma / \rho + \tau)
	> - (\delta/4 + \delta/4) = -\delta/2.
	\end{align*}
	For any $c \in C$, we can find $w \in W$ such that $|c - w| < \eta$ and thus $|c - c_w'| \le |c - w| + |w - c_w'| < 2 \eta = \delta/2$. But then, for any $y \in T'(x)$ and any $c \in C$, with $w \in W$ related to $c$ as in the previous sentence, we have
	\begin{align*}
	\inpr{y - c, u_x} 
	&= \inpr{y - c_w', u_x} + \inpr{c_w' - c, u_x} \\
	&> -\delta/2 - |c_w' - c| \cdot \norm{u_x}
	> -\delta/2 - \delta/2 \cdot 1 = - \delta.
	\end{align*}
	We find that $\inpr{y, u_x} > \inpr{c, u_x} - \delta$ for every $c \in C$ and thus $\inpr{y, u_x} > \sigma_C(u_x) - \delta$. As $y \in T'(x) \subset C$, the choice of $\delta$ implies $|y - \partial \sigma_C(u_x)| < \eps / 2$. As this inequality holds for all $T' \in \Fell \cap \mathcal{G}'$ with $\rge T' \subset C$, it holds in particular for $T$ itself. It follows that $|T(x) - \partial \sigma_C(u_x)| < \eps/2$ too and thus that $|y - T(x)| \le |y - \partial \sigma_C(u_x)| + |\partial \sigma_C(u_x) - T(x)| < \eps$ for any $y \in T'(x)$ and $T' \in \Fellmm \cap \mathcal{G}'$ with $\rge T' \subset C$.
	
	Proposition~\ref{prop:TmmKsingval} with $V = \Rd = \dom T$ and $K = \rho \ball \cap \cl(E)$ provides an open neighbourhood $\mathcal{G}''$ of $T$ in $\Fell(\Rdd)$ such that for any $T' \in \Fellmm \cap \mathcal{G}''$, we have
	\[
	\sup_{x \in E \cap \rho \ball} \sup_{y \in T'(x)} |y - T(x)| < \eps.
	\]

	Finally, put $\mathcal{G} = \mathcal{G}' \cap \mathcal{G}''$. Then, for any $T' \in \Fellmm \cap \mathcal{G}$ with $\rge T' \subset C$ and for any $x \in E$, we have $\sup_{y \in T'(x)} |y - T(x)| < \eps$, whether $|x|$ is bounded by $\rho$ or not. The proof is complete.
\end{proof}

\section{Proofs for Section~\ref{sec:coupling}}
\label{app:coupling}

\begin{proof}[Proof of Lemma~\ref{lem:Picmclosed}]
	(a) As $\varPicm(P, Q) = \varPi(P, Q) \cap \Pcm$, it suffices to show that $\varPi(P, Q)$ and $\Pcm$ are weakly closed.
	
	The set $\varPi(P, Q)$ is closed by the continuous mapping theorem: if $\pi_n \in \varPi(P, Q)$ converges weakly to $\pi \in \Prob(\Rd \times \Rd)$, then $\pi$ belongs to $\varPi(P, Q)$ too, since the projections $(x, y) \mapsto x$ and $(x, y) \mapsto y$ from $\Rd \times \Rd$ onto $\Rd$ are continuous.
	
	Next, we have $\Pcm = \spt^{-1}(\Fellcm)$ where the map $\spt : \Prob(\Rd) \to \Fell(\Rd)$ sends a probability measure to its support. 
	The set $\spt^{-1}(\Fellcm)$ is closed by lower semi-continuity of $\spt$ (Lemma~\ref{lem:sptlsc}) and the fact that the complement of $\Fellcm$ can be written as a union over finite intersections of sets of the form $\Fell_{G}$ for open $G \subset \Rd \times \Rd$ (Lemma~\ref{lem:m:closed}).
	
	(b) If $\pi_n \in \varPi(P_n, Q_n)$ for all $n$ and if $P_n \wto P$, $Q_n \wto Q$ and $\pi_n \wto \pi$, then also $\pi \in \varPi(P, Q)$ by the continuous mapping theorem. It follows that $\{ (P, Q, \pi) : \pi \in \varPi(P, Q) \}$ is weakly closed in $\Prob(\Rd) \times \Prob(\Rd) \times \Prob(\Rd \times \Rd)$. Intersecting the former with the closed set $\Prob(\Rd) \times \Prob(\Rd) \times \Pcm$ yields the set $\{ (P, Q, \pi) : \pi \in \varPicm(P, Q) \}$, which is thus weakly closed as well.
	
	(c) Let $\eps > 0$. By Prohorov's theorem, there exists $\rho > 0$ such that $P(\rho \ball) \ge 1 - \eps/2$ and $Q(\rho \ball) \ge 1 - \eps/2$ for all $P \in K$ and $Q \in L$. For all such $P$ and $Q$ and for all $\pi \in \varPi(P, Q)$, then $\pi(\rho \ball \times \rho \ball) \ge 1 - \eps$. Again by Prohorov's theorem, $M = \bigcup_{P \in K, Q \in L} \varPi(P, Q)$ has weakly compact closure. The set \[ 
	\{ (P, Q, \pi) : P \in K, Q \in L, \pi \in \varPi(P, Q)\} 
	= \{ (P, Q, \pi) : \pi \in \varPi(P, Q)\} 
	\cap \bigl(K \times L \times \Prob(\Rdd)\bigr)
	\]
	is weakly closed by (b) and is contained in the weakly pre-compact set $K \times L \times M$, and is therefore weakly compact.
\end{proof}

For $j \in \{1, 2\}$, let $\proj_j$ denote the projection $(x_1, x_2) \mapsto x_j$ from $\Rd \times \Rd$ into $\Rd$. The next result has appeared as Lemma~A.5 in \citet{devalk+s:2018}.

\begin{lemma}[Support of a margin]
	\label{lem:supp-proj}
	If $P = (\proj_1)_\# \pi$ is the left marginal of $\pi \in \Prob(\Rdd)$, then $\supp P = \cl( \proj_1 (\supp \pi))$.
	As a consequence, if $\supp \pi \subset T$ for some $T \subset \Rdd$, then $\supp P \subset \cl(\dom T)$.
	If, moreover, $T$ is maximal monotone, then $\intr(\supp P) \subset \dom T$.
\end{lemma}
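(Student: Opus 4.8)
The plan is to establish the set identity $\supp P = \cl(\proj_1(\supp \pi))$ by a pair of elementary inclusions, and then read off the two consequences. For the inclusion $\cl(\proj_1(\supp \pi)) \subset \supp P$, since $\supp P$ is closed it suffices to show $\proj_1(\supp \pi) \subset \supp P$: given $x \in \proj_1(\supp \pi)$, pick $y$ with $(x, y) \in \supp \pi$; then for any open neighbourhood $U$ of $x$, the set $U \times \Rd$ is an open neighbourhood of $(x,y)$, so $0 < \pi(U \times \Rd) = P(U)$, whence $x \in \supp P$. For the reverse inclusion I would use the standard fact that, $\Rdd$ being second countable, $\pi$ gives full mass to its support, i.e.\ $\pi(\Rdd \setminus \supp \pi) = 0$. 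The open set $A := \Rd \setminus \cl(\proj_1(\supp \pi))$ satisfies $A \times \Rd \subset \Rdd \setminus \supp \pi$ (a point of $\supp \pi$ projects into $\proj_1(\supp\pi) \subset \cl(\proj_1(\supp\pi))$), hence $P(A) = \pi(A \times \Rd) = 0$; as $\supp P$ is the smallest closed set of full $P$-measure, $\supp P \subset \cl(\proj_1(\supp \pi))$.

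The first consequence is then immediate: if $\supp \pi \subset T$, then $\proj_1(\supp \pi) \subset \proj_1(T) = \dom T$, and taking closures gives $\supp P = \cl(\proj_1(\supp \pi)) \subset \cl(\dom T)$.

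For the last consequence, assume $T$ is maximal monotone. Then $T^{-1}$ is maximal monotone as well and $\dom T = \rge(T^{-1})$, so by \citet[Theorem~12.41]{Rockafellar-Wets} the set $\dom T$ is nearly convex: there is a convex $C \subset \Rd$ with $C \subset \dom T \subset \cl C$. Consequently $\cl(\dom T) = \cl C$, and since for a convex set in $\Rd$ one has $\intr(\cl C) = \intr C$, it follows that $\intr(\cl(\dom T)) = \intr C \subset C \subset \dom T$. Combining this with $\supp P \subset \cl(\dom T)$ from the previous step and taking interiors yields $\intr(\supp P) \subset \intr(\cl(\dom T)) \subset \dom T$, as claimed.

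I do not anticipate a serious obstacle here; the only points needing a little care are the measure-theoretic fact that a Borel probability measure on the second countable space $\Rdd$ assigns no mass to the complement of its support, and, in the last part, the convexity identity $\intr(\cl C) = \intr C$ together with the near-convexity of $\dom T$ for maximal monotone $T$, both of which are standard.
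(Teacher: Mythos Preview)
Your proof is correct and follows essentially the same approach as the paper's own proof: both inclusions for $\supp P = \cl(\proj_1(\supp\pi))$ are established by the same elementary arguments (the open-neighbourhood test for one direction, and $\pi((\supp\pi)^c)=0$ for the other), and the last consequence likewise uses the near-convexity of $\dom T$ together with $\intr(\cl C) = \intr C$ for convex $C$.
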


In Lemma~\ref{lem:supp-proj}, it is not true in general that $\intr(\supp P) \subset \proj_1(\spt \pi)$. A counterexample in $d = 1$ is easily constructed: if $\spt \pi = \{ (x, 1/x) : x \in \reals \setminus \{0\}\}$, then $\spt P = \reals$, but $0 \not\in \proj_1(\spt \pi)$.

\begin{proof}[Proof of Lemma~\ref{lem:STV}]
	The fact that $V$ is contained in both $\dom S$ and $\dom T$ follows from Lemma~\ref{lem:supp-proj}.
	Let $A \subset V$ be a non-empty open ball. It is sufficient to show that $S(x) = T(x)$ for all $x \in A$. To do so, we apply the criterion in Corollary~1.5 in \citet{A-A}: it is sufficient to show that $S(x) \cap T(x) \ne \emptyset$ for all $x$ in a dense subset of $A$.
	Let $U \subset A$ be open and non-empty. By the said criterion, we are done if we can find $x \in U$ such that $S(x) \cap T(x) \ne 0$.
	We have $\pi(U \times \Rd) = P(U) > 0$, since otherwise $\spt P$ would be disjoint with $U$, contradicting $U \subset A \subset \intr(\spt P)$. Since $U \times \Rd$ is open, $\spt \pi$ intersects $U \times \Rd$. Moreover, $\spt \pi \cap (U \times \Rd) \subset \spt \pi \subset S \cap T$. For $(x, y) \in \spt \pi \cap (U \times \Rd)$, we thus have $x \in U$ together with $y \in S(x) \cap T(x)$. The latter intersection is thus not empty, as required.
	
	The statements about $W$ follow by switching the roles of $P$ and $Q$,  noting that $\dom S^{-1} = \rge S$.
\end{proof}

\begin{proof}[Proof of Proposition~\ref{prop:pinTn}]
	Since $\Pcm$ is weakly closed (Lemma~\ref{lem:Picmclosed}), necessarily $\pi \in \Pcm$ too. By Lemma~\ref{lem:sptlsc} and Lemma~\ref{lem:lsc}(iii), we have
	\[
	\spt \pi 
	\subset \liminf_{n \to \infty} \spt \pi_n 
	\subset \liminf_{n \to \infty} T_n.
	\]
	Any accumulation point $T$ of $(T_n)_n$ therefore contains the non-empty set $\spt \pi$ (Lemma~\ref{lem:acc}) and is thus a member of $\Fellmcm$ too (Lemma~\ref{lem:m:closed}). 
	Moreover, by Lemma~\ref{lem:STV} and the relation $\Fellmcm \subset \Fellmm$, all accumulation points $T$ coincide on $V$ and satisfy $V \subset \intr(\dom T)$, just like any other $T \in \Fellmcm$ containing $\spt \pi$.
	Proposition~\ref{prop:Fell:graphconvrelV} implies that $T_n$ converges graphically relative to $V$ and that any $T \in \Fellmcm$ containing $\spt \pi$ is a limit.
\end{proof}

\end{appendix}
%
%

\begin{acks}[Acknowledgments]
The author gratefully acknowledges input from Cees de Valk in an early stage of the paper. The author would like to thank participants of the session on Measure Transportation-Based Statistical Inference at the Joint Statistical Meeting (online, August 2021) and of the workshop on Applied Optimal Transport (IMSI, University of Chicago, May 2022) for helpful discussions, suggestions, and encouragements.
\end{acks}
\begin{funding}
The author gratefully acknowledges support by the grant J.0146.19F (\emph{Cr\'edit de recherche}) of the \emph{Fonds de la Recherche Scientifique - FNRS} (Belgium).
\end{funding}



\bibliographystyle{imsart-nameyear} 
\bibliography{biblio}       

\begin{thebibliography}{25}

\bibitem[\protect\citeauthoryear{Alberti and Ambrosio}{1999}]{A-A}
\begin{barticle}[author]
\bauthor{\bsnm{Alberti},~\bfnm{G.}\binits{G.}} \AND
  \bauthor{\bsnm{Ambrosio},~\bfnm{L.}\binits{L.}}
(\byear{1999}).
\btitle{A geometrical approach to monotone functions in $\mathbb{R}^{n}$}.
\bjournal{Mathematische Zeitschrift}
\bvolume{230}
\bpages{259--316}.
\end{barticle}
\endbibitem

\bibitem[\protect\citeauthoryear{Beer}{1993}]{beer1993}
\begin{bbook}[author]
\bauthor{\bsnm{Beer},~\bfnm{Gerald}\binits{G.}}
(\byear{1993}).
\btitle{Topologies on Closed and Closed Convex Sets}.
\bseries{Mathematics and its Applications}
\bvolume{268}.
\bpublisher{Kluwer Academic Publishers Group}, \baddress{Dordrecht}.
\end{bbook}
\endbibitem

\bibitem[\protect\citeauthoryear{Beirlant
  et~al.}{2020}]{beirlant+b+db+h+k:2020}
\begin{barticle}[author]
\bauthor{\bsnm{Beirlant},~\bfnm{J.}\binits{J.}},
  \bauthor{\bsnm{Buitendag},~\bfnm{S.}\binits{S.}}, \bauthor{\bparticle{del}
  \bsnm{Barrio},~\bfnm{E.}\binits{E.}},
  \bauthor{\bsnm{Hallin},~\bfnm{M.}\binits{M.}} \AND
  \bauthor{\bsnm{Kamper},~\bfnm{F.}\binits{F.}}
(\byear{2020}).
\btitle{Center-outward quantiles and the measurement of multivariate risk}.
\bjournal{Insurance: Mathematics \& Economics}
\bvolume{95}
\bpages{79--100}.
\end{barticle}
\endbibitem

\bibitem[\protect\citeauthoryear{Brenier}{1987}]{Brenier}
\begin{barticle}[author]
\bauthor{\bsnm{Brenier},~\bfnm{Y.}\binits{Y.}}
(\byear{1987}).
\btitle{D\'ecomposition polaire et r\'errangement monotone des champs de
  vecteurs}.
\bjournal{C. R. Acad.\ Sci.\ Paris S\'er.\ I Math.}
\bvolume{305}
\bpages{805--808}.
\end{barticle}
\endbibitem

\bibitem[\protect\citeauthoryear{B\"{u}cher, Segers and
  Volgushev}{2014}]{bucher+s+v:2014}
\begin{barticle}[author]
\bauthor{\bsnm{B\"{u}cher},~\bfnm{Axel}\binits{A.}},
  \bauthor{\bsnm{Segers},~\bfnm{Johan}\binits{J.}} \AND
  \bauthor{\bsnm{Volgushev},~\bfnm{Stanislav}\binits{S.}}
(\byear{2014}).
\btitle{When uniform weak convergence fails: empirical processes for dependence
  functions and residuals via epi- and hypographs}.
\bjournal{The Annals of Statistics}
\bvolume{42}
\bpages{1598--1634}.
\end{barticle}
\endbibitem

\bibitem[\protect\citeauthoryear{Chernozhukov et~al.}{2017}]{Chernozhukov}
\begin{barticle}[author]
\bauthor{\bsnm{Chernozhukov},~\bfnm{V.}\binits{V.}},
  \bauthor{\bsnm{Galichon},~\bfnm{A.}\binits{A.}},
  \bauthor{\bsnm{Hallin},~\bfnm{M.}\binits{M.}} \AND
  \bauthor{\bsnm{Henry},~\bfnm{M.}\binits{M.}}
(\byear{2017}).
\btitle{{Monge--Kantorovich} depth, quantiles, ranks and signs}.
\bjournal{The Annals of Statistics}
\bvolume{45}
\bpages{223--256}.
\end{barticle}
\endbibitem

\bibitem[\protect\citeauthoryear{de~Valk and Segers}{2018}]{devalk+s:2018}
\begin{bmisc}[author]
\bauthor{\bparticle{de} \bsnm{Valk},~\bfnm{Cees}\binits{C.}} \AND
  \bauthor{\bsnm{Segers},~\bfnm{Johan}\binits{J.}}
(\byear{2018}).
\btitle{Tails of optimal transport plans for regularly varying probability
  measures}.
\bdoi{10.48550/ARXIV.1811.12061}
\end{bmisc}
\endbibitem

\bibitem[\protect\citeauthoryear{Deb, Bhattacharya and
  Sen}{2021}]{deb2021efficiency}
\begin{bmisc}[author]
\bauthor{\bsnm{Deb},~\bfnm{Nabarun}\binits{N.}},
  \bauthor{\bsnm{Bhattacharya},~\bfnm{Bhaswar~B.}\binits{B.~B.}} \AND
  \bauthor{\bsnm{Sen},~\bfnm{Bodhisattva}\binits{B.}}
(\byear{2021}).
\btitle{Efficiency lower bounds for distribution-free {Hotelling}-type
  two-sample tests based on optimal transport}.
\bdoi{10.48550/ARXIV.2104.01986}
\end{bmisc}
\endbibitem

\bibitem[\protect\citeauthoryear{Deb and Sen}{2021}]{deb2019multivariate}
\begin{barticle}[author]
\bauthor{\bsnm{Deb},~\bfnm{Nabarun}\binits{N.}} \AND
  \bauthor{\bsnm{Sen},~\bfnm{Bodhisattva}\binits{B.}}
(\byear{2021}).
\btitle{Multivariate rank-based distribution-free nonparametric testing using
  measure transportation}.
\bjournal{Journal of the American Statistical Association}
\bpages{(to appear)}.
\bdoi{10.1080/01621459.2021.1923508}
\end{barticle}
\endbibitem

\bibitem[\protect\citeauthoryear{del Barrio, Gonz\'{a}lez-Sanz and
  Hallin}{2020}]{delbarrio+g+h:2020}
\begin{barticle}[author]
\bauthor{\bparticle{del} \bsnm{Barrio},~\bfnm{Eustasio}\binits{E.}},
  \bauthor{\bsnm{Gonz\'{a}lez-Sanz},~\bfnm{Alberto}\binits{A.}} \AND
  \bauthor{\bsnm{Hallin},~\bfnm{Marc}\binits{M.}}
(\byear{2020}).
\btitle{A note on the regularity of optimal-transport-based center-outward
  distribution and quantile functions}.
\bjournal{Journal of Multivariate Analysis}
\bvolume{180}
\bpages{104671, 13}.
\end{barticle}
\endbibitem

\bibitem[\protect\citeauthoryear{del Barrio, Sanz and
  Hallin}{2022}]{delBarrio2022quantile}
\begin{bmisc}[author]
\bauthor{\bparticle{del} \bsnm{Barrio},~\bfnm{Eustasio}\binits{E.}},
  \bauthor{\bsnm{Sanz},~\bfnm{Alberto~Gonzalez}\binits{A.~G.}} \AND
  \bauthor{\bsnm{Hallin},~\bfnm{Marc}\binits{M.}}
(\byear{2022}).
\btitle{Nonparametric Multiple-Output Center-Outward Quantile Regression}.
\bdoi{10.48550/ARXIV.2204.11756}
\end{bmisc}
\endbibitem

\bibitem[\protect\citeauthoryear{Ghosal and Sen}{2022}]{ghosal2022multivariate}
\begin{barticle}[author]
\bauthor{\bsnm{Ghosal},~\bfnm{Promit}\binits{P.}} \AND
  \bauthor{\bsnm{Sen},~\bfnm{Bodhisattva}\binits{B.}}
(\byear{2022}).
\btitle{{Multivariate ranks and quantiles using optimal transport: Consistency,
  rates and nonparametric testing}}.
\bjournal{The Annals of Statistics}
\bvolume{50}
\bpages{1012--1037}.
\end{barticle}
\endbibitem

\bibitem[\protect\citeauthoryear{Hallin, Hlubinka and Šárka
  Hudecová}{2022}]{hallin2022JASA}
\begin{barticle}[author]
\bauthor{\bsnm{Hallin},~\bfnm{Marc}\binits{M.}},
  \bauthor{\bsnm{Hlubinka},~\bfnm{Daniel}\binits{D.}} \AND
  \bauthor{\bparticle{Šárka} \bsnm{Hudecová}}
(\byear{2022}).
\btitle{Efficient fully distribution-free center-outward rank tests for
  multiple-output regression and {MANOVA}}.
\bjournal{Journal of the American Statistical Association}
\bpages{(to appear)}.
\bdoi{10.1080/01621459.2021.2021921}
\end{barticle}
\endbibitem

\bibitem[\protect\citeauthoryear{Hallin et~al.}{2021}]{hallin2021AoS}
\begin{barticle}[author]
\bauthor{\bsnm{Hallin},~\bfnm{Marc}\binits{M.}}, \bauthor{\bparticle{del}
  \bsnm{Barrio},~\bfnm{Eustasio}\binits{E.}},
  \bauthor{\bsnm{Cuesta-Albertos},~\bfnm{Juan}\binits{J.}} \AND
  \bauthor{\bsnm{Matrán},~\bfnm{Carlos}\binits{C.}}
(\byear{2021}).
\btitle{{Distribution and quantile functions, ranks and signs in dimension $d$:
  A measure transportation approach}}.
\bjournal{The Annals of Statistics}
\bvolume{49}
\bpages{1139 -- 1165}.
\end{barticle}
\endbibitem

\bibitem[\protect\citeauthoryear{Matheron}{1975}]{matheron1975}
\begin{bbook}[author]
\bauthor{\bsnm{Matheron},~\bfnm{G.}\binits{G.}}
(\byear{1975}).
\btitle{Random sets and integral geometry}.
\bseries{Wiley Series in Probability and Mathematical Statistics}.
\bpublisher{John Wiley \& Sons}, \baddress{New York}.
\end{bbook}
\endbibitem

\bibitem[\protect\citeauthoryear{McCann}{1995}]{McCann}
\begin{barticle}[author]
\bauthor{\bsnm{McCann},~\bfnm{R.~J.}\binits{R.~J.}}
(\byear{1995}).
\btitle{Existence and uniqueness of monotone measure-preserving maps}.
\bjournal{Duke Mathematical Journal}
\bvolume{80}
\bpages{309--323}.
\end{barticle}
\endbibitem

\bibitem[\protect\citeauthoryear{Molchanov}{2005}]{Molchanov2005}
\begin{bbook}[author]
\bauthor{\bsnm{Molchanov},~\bfnm{Ilya}\binits{I.}}
(\byear{2005}).
\btitle{Theory of Random Sets}.
\bpublisher{Springer-Verlag}, \baddress{London}.
\end{bbook}
\endbibitem

\bibitem[\protect\citeauthoryear{Panaretos and Zemel}{2020}]{panaretos+z:2020}
\begin{bbook}[author]
\bauthor{\bsnm{Panaretos},~\bfnm{Victor~M.}\binits{V.~M.}} \AND
  \bauthor{\bsnm{Zemel},~\bfnm{Yoav}\binits{Y.}}
(\byear{2020}).
\btitle{An Invitation to Statistics in Wasserstein Space}.
\bseries{SpringerBriefs in Probability and Mathematical Statistics}.
\bpublisher{Springer}, \baddress{Cham}.
\end{bbook}
\endbibitem

\bibitem[\protect\citeauthoryear{Rockafellar}{1970}]{Rockafellar}
\begin{bbook}[author]
\bauthor{\bsnm{Rockafellar},~\bfnm{R.~T.}\binits{R.~T.}}
(\byear{1970}).
\btitle{Convex Analysis}.
\bpublisher{Princeton University Press}, \baddress{Princeton}.
\end{bbook}
\endbibitem

\bibitem[\protect\citeauthoryear{Rockafellar and Wets}{1998}]{Rockafellar-Wets}
\begin{bbook}[author]
\bauthor{\bsnm{Rockafellar},~\bfnm{R.~T.}\binits{R.~T.}} \AND
  \bauthor{\bsnm{Wets},~\bfnm{R.~J.~B.}\binits{R.~J.~B.}}
(\byear{1998}).
\btitle{Variational Analysis}.
\bpublisher{Springer}, \baddress{New York}.
\end{bbook}
\endbibitem

\bibitem[\protect\citeauthoryear{Salinetti and Wets}{1981}]{salinetti+w:1981}
\begin{barticle}[author]
\bauthor{\bsnm{Salinetti},~\bfnm{G.}\binits{G.}} \AND
  \bauthor{\bsnm{Wets},~\bfnm{R.~J.~B.}\binits{R.~J.~B.}}
(\byear{1981}).
\btitle{On the convergence of closed-valued measurable multifunctions}.
\bjournal{Transactions of the American Mathematical Society}
\bvolume{266}
\bpages{275--289}.
\end{barticle}
\endbibitem

\bibitem[\protect\citeauthoryear{Shi, Drton and Han}{2022}]{shi2020JASA}
\begin{barticle}[author]
\bauthor{\bsnm{Shi},~\bfnm{Hongjian}\binits{H.}},
  \bauthor{\bsnm{Drton},~\bfnm{Mathias}\binits{M.}} \AND
  \bauthor{\bsnm{Han},~\bfnm{Fang}\binits{F.}}
(\byear{2022}).
\btitle{Distribution-free consistent independence tests via center-outward
  ranks and signs}.
\bjournal{Journal of the American Statistical Association}
\bvolume{117}
\bpages{395--410}.
\end{barticle}
\endbibitem

\bibitem[\protect\citeauthoryear{Shi et~al.}{2022}]{shi2020rateoptimality}
\begin{barticle}[author]
\bauthor{\bsnm{Shi},~\bfnm{Hongjian}\binits{H.}},
  \bauthor{\bsnm{Hallin},~\bfnm{Marc}\binits{M.}},
  \bauthor{\bsnm{Drton},~\bfnm{Mathias}\binits{M.}} \AND
  \bauthor{\bsnm{Han},~\bfnm{Fang}\binits{F.}}
(\byear{2022}).
\btitle{On universally consistent and fully distribution-free rank tests of
  vector independence}.
\bjournal{The Annals of Statistics}
\bpages{(to appear; available at arXiv:2007.02186)}.
\end{barticle}
\endbibitem

\bibitem[\protect\citeauthoryear{van~der Vaart and Wellner}{1996}]{vdVW96}
\begin{bbook}[author]
\bauthor{\bparticle{van~der} \bsnm{Vaart},~\bfnm{A.~W.}\binits{A.~W.}} \AND
  \bauthor{\bsnm{Wellner},~\bfnm{J.~A.}\binits{J.~A.}}
(\byear{1996}).
\btitle{Weak Convergence and Empirical Processes. With Applications in
  Statistics}.
\bpublisher{Springer-Verlag}, \baddress{New York}.
\end{bbook}
\endbibitem

\bibitem[\protect\citeauthoryear{Villani}{2003}]{Villani-Topics}
\begin{bbook}[author]
\bauthor{\bsnm{Villani},~\bfnm{C\'edric}\binits{C.}}
(\byear{2003}).
\btitle{Topics in Optimal Transportation}.
\bpublisher{American Mathematical Society}, \baddress{Providence, RI}.
\end{bbook}
\endbibitem

\end{thebibliography}

\clearpage

\begin{appendix}
	\setcounter{section}{3}
	
	\section{Supplementary material -- Fell space}
	\label{app:Fell}
	
	Throughout, let $\E$ be a locally compact, Hausdorff second countable (LCHS) space. The basic object of study is the collection of closed subsets of $E$, denoted by $\Fell = \Fell(\E) = \{ F \subset \E : \text{$F$ is closed} \}$. Recall the notation $\Fell_A = \Fell_A(\E) = \{ F \in \Fell(\E) : F \cap A \ne \emptyset \}$ and $\Fell^A = \Fell^A(\E) = \{ F \in \Fell(\E) : F \cap A = \emptyset \}$. The Fell topology on $\Fell$ is the one generated by the collections $\Fell_G$ for open $G \subset \E$ and $\Fell^K$ for compact $K \subset \E$. This Supplement collects some useful results on the Fell topology, focusing on approximation properties of Fell neighbourhoods, inner and outer limits, lower and upper semi-continuity, subspaces, and measurability. The material is inspired from \cite{salinetti+w:1981}, \cite{Rockafellar-Wets} and \cite{Molchanov2005}, formulated and extended in a way suitable for use in the main paper. 
	
	\subsection{Fell neighbourhoods}
	
	Let $d$ be a metric on $\E$ generating the given topology on $\E$. For $x \in \E$ and $A \subset E$, let $d(x, A) = \inf \{ d(x, a) : a \in A \}$, with $\inf \emptyset = \infty$ by convention.
	
	\begin{lemma}[Fell neighbourhoods]
		\label{lem:Fellnbd}
		Let $F \in \Fell(\E)$ and let $d$ be a metric on $\E$ generating its topology. For every $\eps > 0$ and for every compact $K \subset \E$, there exists an open neighbourhood $\mathcal{G}$ of $F$ in $\Fell(\E)$ such that for any $F_1, F_2 \in \mathcal{G}$ we have 
		\[ 
		F_1 \cap K \subset \{ x \in \E : d(x, F_2) < \eps \}.
		\]
	\end{lemma}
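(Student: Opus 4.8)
The plan is to realise the desired neighbourhood as $\mathcal{G} = \Fell^L \cap \bigcap_{j=1}^p \Fell_{B_j}$, where the single ``miss'' set $\Fell^L$ keeps $F_1 \cap K$ within distance $\eps'$ of $F$ for a suitable $\eps' \le \eps/3$, and the finitely many ``hit'' sets $\Fell_{B_j}$ force $F_2$ to come within $\eps'$ of every point of $F$ that lies close to $K$; three applications of the triangle inequality then give $d(x, F_2) < 3\eps' \le \eps$ for every $x \in F_1 \cap K$. The one genuine difficulty is that $F$ is merely closed: the ``relevant'' part of $F$, namely the points within $\eps'$ of $K$, need not be compact a priori and so cannot be covered by finitely many small balls directly. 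This is resolved by first trapping it inside a compact set, which is exactly where local compactness of $\E$ is used.

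Concretely, I would first dispose of the trivial case $K = \varnothing$ (take $\mathcal{G} = \Fell(\E)$). For $K \ne \varnothing$, local compactness together with a finite subcover of $K$ produces a compact set $\tilde K \subset \E$ with $K \subset \intr \tilde K$. The function $u \mapsto d(u, \E \setminus \intr \tilde K)$ is continuous and strictly positive on the compact set $K$, hence bounded below there by some $\delta_1 > 0$; set $\eps' = \min(\eps/3, \delta_1/2) > 0$. A one-line triangle-inequality estimate then yields $\{ u \in \E : d(u, K) < \eps' \} \subset \tilde K$, so in particular every $w \in F$ with $d(w, K) < \eps'$ lies in the compact set $F \cap \tilde K$.

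Next, put $L = \{ x \in K : d(x, F) \ge \eps' \}$; it is a closed subset of $K$, hence compact, and it is disjoint from $F$, so $\Fell^L$ is an open neighbourhood of $F$ and any $F_1 \in \Fell^L$ satisfies $F_1 \cap K \subset \{ x \in K : d(x, F) < \eps' \}$. If $F \cap \tilde K = \varnothing$, then by the previous paragraph this already forces $F_1 \cap K = \varnothing$, so $\mathcal{G} = \Fell^L$ works vacuously. Otherwise, cover the compact set $F \cap \tilde K$ by finitely many balls $B_j = \{ u \in \E : d(u, w_j) < \eps' \}$, $j = 1, \dots, p$, with centres $w_j \in F$; each $\Fell_{B_j}$ is an open neighbourhood of $F$ since $w_j \in F \cap B_j$, and I take $\mathcal{G} = \Fell^L \cap \bigcap_{j=1}^p \Fell_{B_j}$.

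To finish, let $F_1, F_2 \in \mathcal{G}$ and $x \in F_1 \cap K$. Since $F_1 \in \Fell^L$, there is $w \in F$ with $d(x, w) < \eps'$; then $d(w, K) \le d(w, x) < \eps'$, so $w \in F \cap \tilde K$ and hence $w \in B_j$ for some $j$, i.e.\ $d(w, w_j) < \eps'$. Since $F_2 \in \Fell_{B_j}$ there is $v \in F_2 \cap B_j$, so $d(v, w_j) < \eps'$, and therefore
\[
	d(x, F_2) \le d(x, v) \le d(x, w) + d(w, w_j) + d(w_j, v) < 3 \eps' \le \eps .
\]
I expect the main obstacle to be nothing deep but rather the bookkeeping around $\tilde K$: one must check that the witness $w$ produced from the ``miss'' condition always lands in the compact set that was covered by the balls $B_j$, and this is precisely what the choice $\eps' \le \delta_1/2$ secures.
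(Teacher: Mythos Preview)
Your argument is correct. The route differs from the paper's, though. The paper covers $K$ (not $F$) by closed balls $\ball_x(\eps/3)$ with centres $x$ in a finite set $X \subset K$, then splits $X$ into $X_1 = \{x : F \cap \oball_x(2\eps/3) \ne \emptyset\}$ and $X_2 = \{x : F \cap \ball_x(\eps/3) = \emptyset\}$, and takes $\mathcal{G} = \bigcap_{x \in X_1} \Fell_{\oball_x(2\eps/3)} \cap \bigcap_{x \in X_2} \Fell^{\ball_x(\eps/3) \cap K}$. For $y \in F_1 \cap K$, the centre $x$ of the covering ball containing $y$ cannot lie in $X_2$, hence lies in $X_1$, so $F_2$ meets $\oball_x(2\eps/3)$ and two triangle-inequality steps give $d(y,F_2) < \eps/3 + 2\eps/3 = \eps$. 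This avoids your auxiliary compact $\tilde K$ altogether: the ``miss'' sets are $\ball_x(\eps/3) \cap K$, automatically compact as closed subsets of $K$, so local compactness is never invoked explicitly. Your construction instead isolates the relevant part of $F$ near $K$ and covers \emph{that}; this makes the roles of the hit and miss conditions very transparent (one controls $d(F_1\cap K, F)$, the other $d(F, F_2)$ on the relevant piece), at the cost of the extra enlargement step and a third triangle-inequality term. Both yield a basic Fell neighbourhood of the same shape; the paper's version is just a bit leaner.
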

	
	\begin{proof}
		If $F$ is empty, then we can simply put $\mathcal{G} = \Fell^K$; so suppose $F$ is non-empty. Let $\ball_x(\rho) = \{ y \in \E : d(x, y) \le \rho \}$ and $\oball_x(\rho) = \{ y \in \E : d(x, y) < \rho \}$ denote the closed and open balls with centre $x \in \E$ and radius $\rho > 0$, respectively.
		Let $X \subset K$ be a finite set such that $K$ is covered by the balls $\ball_x(\eps/3)$ with $x \in X$. Put
		\begin{align*}
		X_1 &= \{ x \in X : F \cap \oball_x(2\eps/3) \ne \emptyset\}, &
		X_2 &= \{ x \in X : F \cap \ball_x(\eps/3) = \emptyset \}.
		\end{align*}
		(Note that there is an open ball in the definition of $X_1$ but a closed one in that of $X_2$ and that the radii are different.) Define
		\[
		\mathcal{G} = \bigcap_{x \in X_1} \Fell_{\oball_x(2\eps/3)} \cap \bigcap_{x \in X_2} \Fell^{\ball_x(\eps/3) \cap K}.
		\]
		By construction, $\mathcal{G}$ is open in $\Fell(\E)$ and includes $F$.
		
		Let $F_1, F_2 \in \mathcal{G}$. If $F_1 \cap K = \emptyset$, there is nothing to prove. Otherwise, let $y \in F_1 \cap K$; we will show that $d(y, F_2) \le \eps$. By definition of $X$, there exists $x \in X$ (depending on $y$) such that $y \in \ball_{x}(\eps/3)$. This $x$ cannot belong to $X_2$, since $y$ belongs to $F_1 \cap \ball_{x}(\eps/3) \cap K$ while $F_1$ misses $\ball_{x}(\eps/3) \cap K$ for all $x \in X_2$. By definition of $X_2$, this means that $F$ hits $\ball_x(\eps/3)$ and thus also $\oball_x(2\eps/3)$, so that $x \in X_1$. But then $F_2$ hits $\oball_x(2\eps/3)$ too, in $z$, say. The triangle inequality yields $d(y, F_2) \le d(y, z) \le d(y, x) + d(x, z) < \eps/3 + 2\eps/3 = \eps$, as required.
	\end{proof}
	
	\subsection{Inner and outer limits}
	
	\begin{definition}
		The \emph{inner} and \emph{outer limits} of a sequence $(F_n)_n$ in $\Fell(\E)$ are defined respectively as
		\begin{align*}
		\liminf_{n \to \infty} F_n 
		&= \{ x \in \E : \text{for any open $G \subset \E$ with $x \in G$, we have $F_n \cap G \ne \varnothing$ for all large $n$}\}, \\
		\limsup_{n \to \infty} F_n 
		&= \{ x \in \E : \text{for any open $G \subset \E$ with $x \in G$, we have $F_n \cap G \ne \varnothing$ infinitely often}\}.
		\end{align*}
		Equivalent definitions are that $x \in \liminf_{n \to \infty} F_n$ if and only if there exist points $x_n \in F_n$ such that $x_n \to x$ as $n \to \infty$, while $x \in \limsup_{n \to \infty} F_n$ if and only if there exist an infinite subset $N \subset \NN$ and points $x_n \in F_n$ for $n \in N$ such that $x_n \to x$ as $n \to \infty$ in $N$.
	\end{definition}
	
	
	\begin{lemma}[Inner limit]
		\label{lem:inner}
		Let $\underline{F} = \liminf_{n \to \infty} F_n$ in $\Fell(\E)$.
		For all $A \subset \E$, the following two statements are equivalent:
		\begin{itemize}
			\item[(i)] $A \subset \underline{F}$;
			\item[(ii)] for every open $G \subset \E$ such that $A \cap G \ne \emptyset$, we have $F_n \cap G \ne \emptyset$ for all large $n$.
		\end{itemize}
		In particular, $\underline{F}$ is closed.
	\end{lemma}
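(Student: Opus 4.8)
The plan is to do nothing more than unwind the neighbourhood characterisation of the inner limit: both implications are immediate from the definition, and the closedness of $\underline{F}$ will then follow by applying the equivalence to the particular set $A = \cl(\underline{F})$.

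First I would prove (i)$\Rightarrow$(ii). Assume $A \subset \underline{F}$ and let $G \subset \E$ be open with $A \cap G \ne \emptyset$; pick $x \in A \cap G$. Since $x \in \underline{F}$ and $G$ is an open set containing $x$, the defining property of $\underline{F}$ gives $F_n \cap G \ne \emptyset$ for all large $n$, which is exactly (ii).

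Next, for (ii)$\Rightarrow$(i), I would fix $x \in A$ and let $G \subset \E$ be any open set with $x \in G$. Then $x \in A \cap G$, so $A \cap G \ne \emptyset$, and (ii) yields $F_n \cap G \ne \emptyset$ for all large $n$. As this holds for every open neighbourhood $G$ of $x$, the definition gives $x \in \underline{F}$; since $x \in A$ was arbitrary, $A \subset \underline{F}$.

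Finally, to obtain that $\underline{F}$ is closed, I would apply the just-established equivalence with $A = \cl(\underline{F})$. If $G \subset \E$ is open and $\cl(\underline{F}) \cap G \ne \emptyset$, then $G$ is an open set meeting $\cl(\underline{F})$ and hence meets $\underline{F}$ itself; applying (i)$\Rightarrow$(ii) to the trivial inclusion $\underline{F} \subset \underline{F}$ then shows $F_n \cap G \ne \emptyset$ for all large $n$. Thus (ii) holds for $A = \cl(\underline{F})$, and (ii)$\Rightarrow$(i) gives $\cl(\underline{F}) \subset \underline{F}$. (Since $\E$ is metrizable, one could equally argue by sequences: if $x_k \to x$ with $x_k \in \underline{F}$, then every open neighbourhood of $x$ contains some $x_k \in \underline{F}$ and hence meets $F_n$ for all large $n$, so $x \in \underline{F}$.) There is essentially no real obstacle here; the only point deserving a moment's care is that membership in $\underline{F}$ is defined by a condition quantified over \emph{all} open neighbourhoods of the point, which is precisely what lets the equivalence pass from individual points to the set $A$.
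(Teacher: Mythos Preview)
Your proof is correct and follows essentially the same approach as the paper: the two implications are handled identically, and for closedness the paper likewise reduces to the equivalence (applying it with $A = \underline{F}$ to a point $x$ whose every open neighbourhood hits $\underline{F}$, which is the pointwise version of your argument with $A = \cl(\underline{F})$).
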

	
	\begin{proof}
		\emph{(i) implies (ii).} ---
		Let $A \subset \underline{F}$ and suppose that $A$ hits some open $G \subset \E$; we need to show that $F_n$ hits $G$ for all large $n$.
		Since $A \cap G \ne \emptyset$, there exists $x \in A \cap G$. Then $x \in \underline{F}$ by assumption and also $x \in G$, so that, by definition of the inner limit, $F_n \cap G \ne \emptyset$ for all large $n$.
		
		\emph{(ii) implies (i).} ---
		Let $x \in A$; we need to show that $x \in \underline{F}$. To that end, let $G \subset \E$ be open and such that $x \in G$; we need to show that $F_n$ hits $G$ for all large $n$. But from $x \in A \cap G$ it follows that $A$ hits $G$, which, by (ii), implies precisely what we want. Hence, $x \in \underline{F}$, as required.
		
		\emph{$\underline{F}$ is closed.} ---
		Let $x \in \E$ have the property that every open $G \subset \E$ that contains $x$ hits $\underline{F}$; we need to show that $x \in \underline{F}$. To that end, let $G \subset \E$ be open and contain $x$; we need to show that $F_n \cap G \ne \emptyset$ for all large $n$. But this follows from (ii) applied to $A = \underline{F}$, since $\underline{F} \cap G \ne \emptyset$, as the intersection contains $x$.
	\end{proof}
	
	%
	
	\begin{lemma}[Outer limit]
		\label{lem:outer}
		Let $\overline{F} = \limsup_{n \to \infty} F_n$ in $\Fell(\E)$. Let $A \subset \E$ and consider the following statements:
		\begin{itemize}
			\item[(i)]
			$A \supset \overline{F}$;
			\item[(ii)]
			for every compact $K \subset \E$ such that $A \cap K = \emptyset$, we have $F_n \cap K = \emptyset$ for all large $n$;
			\item[(iii)]
			$\cl(A) \supset \overline{F}$.
		\end{itemize}
		Then we have the chain of implications (i)\,$\implies$\,(ii)\,$\implies$\,(iii), and these become equivalences if $A$ is itself closed.
		Moreover, $\overline{F}$ is closed.
	\end{lemma}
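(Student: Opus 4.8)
The plan is to establish the cycle (i)\,$\implies$\,(ii)\,$\implies$\,(iii)\,$\implies$\,(i), where the last arrow is available only when $A$ is closed, and to prove separately that $\overline F$ is closed; the stated equivalences then follow. Throughout I would use that $\E$, being LCHS, is metrizable --- fix a compatible metric $d$ as before Lemma~\ref{lem:Fellnbd} --- so that compact subsets of $\E$ are sequentially compact, together with the sequential description of the outer limit recalled in the Definition: $x \in \overline F$ if and only if there are an infinite $N \subset \NN$ and points $x_n \in F_n$ for $n \in N$ with $x_n \to x$ as $n \to \infty$ in $N$.

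Closedness of $\overline F$ is handled exactly as for the inner limit in Lemma~\ref{lem:inner}: if $x \in \cl(\overline F)$ and $G$ is any open set containing $x$, pick $y \in \overline F \cap G$; since $G$ is an open neighbourhood of the point $y \in \overline F$, the set $F_n$ meets $G$ for infinitely many $n$, and as $G$ was arbitrary this gives $x \in \overline F$. For (i)\,$\implies$\,(ii), let $K$ be compact with $A \cap K = \emptyset$ and suppose, for contradiction, that $F_n \cap K \ne \emptyset$ for infinitely many $n$; choosing $x_n \in F_n \cap K$ along such $n$ and extracting, by sequential compactness of $K$, a convergent subsequence $x_{n_j} \to x \in K$, the sequential description yields $x \in \overline F \subset A$, contradicting $x \in K$ and $A \cap K = \emptyset$. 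For (ii)\,$\implies$\,(iii) it suffices to show $\overline F \subset \cl(A)$: if some $x \in \overline F$ lay outside the closed set $\cl(A)$, local compactness and Hausdorffness of $\E$ produce a compact neighbourhood $K$ of $x$ with $K \cap \cl(A) = \emptyset$, hence $K \cap A = \emptyset$; then (ii) forces $F_n \cap K = \emptyset$ for all large $n$, so $F_n$ misses the open set $\intr K \ni x$ for all large $n$, contradicting $x \in \overline F$. Finally, if $A$ is closed then $\cl(A) = A$, so (iii) is literally (i), which closes the cycle and upgrades all three implications to equivalences.

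I expect the one step deserving care to be (ii)\,$\implies$\,(iii): this is precisely where local compactness of $\E$ is essential, since hypothesis (ii) only ``sees'' compact test sets and one must therefore replace the open neighbourhood witnessing $x \notin \cl(A)$ by a compact one; the implication can fail without local compactness. The only other non-formal ingredient is the subsequence extraction in (i)\,$\implies$\,(ii), which is legitimate because an LCHS space is metrizable and compact metric spaces are sequentially compact.
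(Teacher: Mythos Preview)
Your proof is correct and follows essentially the same approach as the paper: both argue (i)\,$\Rightarrow$\,(ii) by extracting a convergent subsequence from a compact $K$ hit infinitely often, (ii)\,$\Rightarrow$\,(iii) by using local compactness to produce a compact neighbourhood of a point in $\overline{F}\setminus\cl(A)$ disjoint from $A$, and closedness of $\overline{F}$ by the same ``pick $y\in\overline{F}\cap G$'' trick as for the inner limit. The only cosmetic difference is that the paper takes an open $G$ with compact closure $K$ rather than a compact neighbourhood $K$ with open interior, which amounts to the same thing.
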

	
	\begin{proof}
		\emph{(i) implies (ii).} ---
		We proceed by contraposition. Let $A \subset \E$ and suppose there exist compact $K \subset \E$ and $N \subset \NN$ with $|N| = \infty$ such that $A \cap K = \emptyset$ and nevertheless $F_n \cap K \ne \emptyset$ for all $n \in N$. We need to show that $\overline{F} \not\subset A$. Let $x_n \in F_n \cap K$ for every $n \in N$. Since $K$ is compact, we can find $x \in K$ and $M \subset N$ with $|M| = \infty$ such that $x_n \to x$ as $n \to \infty$ in $M$. Let $G \subset \E$ be open and contain $x$. Then $x_n \in F_n \cap G$ for infinitely many $n$ and thus $x \in \overline{F}$. At the same time, $x \in K$ and thus $x \not\in A$, since we assumed that $A \cap K = \emptyset$. It follows that $\overline{F}$ is not a subset of $A$, as required.
		
		\emph{(ii) implies (iii).} ---
		Again, we proceed by contraposition. Suppose $\overline{F}$ is not a subset of $\cl(A)$, so that there exists $x \in \overline{F} \setminus \cl(A)$. We need to find a compact subset $K \subset \E$ such that $A \cap K = \emptyset$ while still $F_n \cap K \ne \emptyset$ for infinitely many $n$. To do so, let $G \subset \E$ be an open neighbourhood of $x$ with compact closure, say $K$, such that $K \cap \cl(A) = \emptyset$ and therefore also $K \cap A = \emptyset$; this is possible since $x \not\in \cl(A)$ and $\E$ is LCHS. Since $x \in \overline{F}$, we have $F_n \cap G \ne \emptyset$ and thus $F_n \cap K \ne \emptyset$ infinitely often, as required.
		
		If $A$ is itself closed, then $\cl(A) = A$ and (iii) trivially implies (i), completing the implication circle.
		
		To show that $\overline{F}$ is closed, let $x \in \E$ be such that any open $G \subset \E$ that contains $x$ hits $\overline{F}$; we need to show that $x \in \overline{F}$. To that end, let $G \subset \E$ be open and contain $x$; to show is that $F_n \cap G \ne \emptyset$ for infinitely many $n$. But $\overline{F} \cap G$ being non-empty by the assumption on $x$, we can find $x' \in \overline{F} \cap G$. The definition of $\overline{F}$ applied to $x'$ implies that $F_n$ hits $G$ infinitely often.
	\end{proof}
	
	\begin{lemma}
		\label{lem:Fellinout}
		In $\Fell(\E)$, we have $F_n \to F$ as $n \to \infty$ if and only if $\liminf_{n \to \infty} F_n = F = \limsup_{n \to \infty} F_n$.
	\end{lemma}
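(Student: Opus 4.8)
The plan is to split both sides of the claimed equivalence into a ``hit'' half and a ``miss'' half and to match them up using Lemmas~\ref{lem:inner} and~\ref{lem:outer} applied with $A = F$.

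First I would record the sequential characterisation of convergence in $\Fell(\E)$. In any topological space, $F_n \to F$ means that $F_n$ eventually lies in every neighbourhood of $F$; and the finite intersections $\Fell_{G_1} \cap \cdots \cap \Fell_{G_p} \cap \Fell^{K_1} \cap \cdots \cap \Fell^{K_q}$ with $G_i \subset \E$ open, $K_j \subset \E$ compact, $F \cap G_i \ne \emptyset$ and $F \cap K_j = \emptyset$ form a neighbourhood base at $F$ for the Fell topology. Since eventual membership in a finite intersection is the same as eventual membership in each factor (take the largest of finitely many thresholds), $F_n \to F$ in $\Fell(\E)$ is equivalent to the conjunction of the following two conditions: (H) for every open $G$ with $F \cap G \ne \emptyset$ we have $F_n \cap G \ne \emptyset$ for all large $n$; and (M) for every compact $K$ with $F \cap K = \emptyset$ we have $F_n \cap K = \emptyset$ for all large $n$.

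Next I would recast Painlev\'e--Kuratowski convergence. One always has $\liminf_n F_n \subset \limsup_n F_n$, so the identity $\liminf_n F_n = F = \limsup_n F_n$ is equivalent to the pair of inclusions $F \subset \liminf_n F_n$ and $\limsup_n F_n \subset F$. Now Lemma~\ref{lem:inner} with $A = F$ states exactly that $F \subset \liminf_n F_n$ is equivalent to~(H); and Lemma~\ref{lem:outer} with $A = F$, which is permissible since $F$ is closed so that statements~(i) and~(ii) there are equivalent, states exactly that $\limsup_n F_n \subset F$ is equivalent to~(M). Chaining these equivalences, $F_n \to F \iff (\text{H}) \text{ and } (\text{M}) \iff F \subset \liminf_n F_n \text{ and } \limsup_n F_n \subset F \iff \liminf_n F_n = F = \limsup_n F_n$, which is the assertion.

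I expect no real obstacle here: the only points deserving a line of care are the passage from arbitrary Fell neighbourhoods of $F$ to the subbasic ones $\Fell_G$ and $\Fell^K$ (handled by the finite-threshold remark above), and the degenerate case $F = \emptyset$, which is covered automatically---then~(H) is vacuous, (M) says that $F_n$ eventually misses every compact set, and Lemmas~\ref{lem:inner}--\ref{lem:outer} specialise correctly to $\liminf_n F_n = \emptyset = \limsup_n F_n$. Note in particular that neither the metrizability nor the compactness of $\Fell(\E)$ is used for this lemma; the sequential notion of convergence in a topological space suffices.
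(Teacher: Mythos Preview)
Your proof is correct and follows essentially the same approach as the paper: the paper's proof is a one-sentence sketch invoking Lemmas~\ref{lem:inner} and~\ref{lem:outer} together with the subbasic characterisation of Fell convergence, and you have simply spelled out that sketch in full, including the neighbourhood-base reduction and the observation that $\liminf_n F_n \subset \limsup_n F_n$ reduces the double equality to two inclusions.
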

	
	\begin{proof}
		This property is well-known when starting from the traditional definitions of the inner and outer limits, see for instance \citet[Theorem~2.2]{salinetti+w:1981}. Its proof from Lemmas~\ref{lem:inner} and~\ref{lem:outer} is a direct consequence of the fact that $F_n \to F$ as $n \to \infty$ if and only if, for every open $G \subset \E$ such that $F \in \Fell_G$ and every compact $K \subset \E$ such that $F \in \Fell^K$, we have $F_n \in \Fell_G$ and $F_n \in \Fell^K$ for all large $n$.
	\end{proof}
	
	Recall that an accumulation point $x$ of a sequence $(x_n)_n$ in a topological space is a limit point of a subsequence of $(x_n)_n$, that is, there exists $N \subset \NN$ with $|N| = \infty$ such that $x_n \to x$ as $n \to \infty$ in $N$.
	
	\begin{lemma}
		\label{lem:acc}
		Let $F$ be an accumulation point of the sequence $(F_n)_n$ in $\Fell(\E)$. Then $\liminf_{n \to \infty} F_n \subset F \subset \limsup_{n \to \infty} F_n$.
	\end{lemma}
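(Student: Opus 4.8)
The plan is to reduce everything to the convergent subsequence and then invoke Lemma~\ref{lem:Fellinout} together with the sequential characterizations of the inner and outer limits. By definition of accumulation point, there is an infinite $N \subset \NN$ such that $F_n \to F$ as $n \to \infty$ in $N$. Reindexing $N$ as an ordinary sequence and applying Lemma~\ref{lem:Fellinout} to it, we get $\liminf_{n \in N} F_n = F = \limsup_{n \in N} F_n$. So it suffices to compare the full-sequence inner and outer limits with the subsequence ones.

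For the inclusion $\liminf_{n \to \infty} F_n \subset F$: take $x \in \liminf_{n \to \infty} F_n$. By the sequential description of the inner limit, there are points $x_n \in F_n$ with $x_n \to x$ as $n \to \infty$. In particular $x_n \to x$ as $n \to \infty$ in $N$, with $x_n \in F_n$ for each $n \in N$, which is exactly the condition for $x \in \liminf_{n \in N} F_n$. Since that inner limit equals $F$, we conclude $x \in F$.

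For the inclusion $F \subset \limsup_{n \to \infty} F_n$: take $x \in F = \liminf_{n \in N} F_n$. Again by the sequential description of the inner limit (now along $N$), there exist $x_n \in F_n$ for $n \in N$ with $x_n \to x$ as $n \to \infty$ in $N$. Since $N$ is infinite, this is precisely a witness for $x \in \limsup_{n \to \infty} F_n$ in the sense of the (sub)sequential characterization of the outer limit. This completes both inclusions.

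I do not expect a genuine obstacle here; the only point requiring a word of care is that Lemma~\ref{lem:Fellinout} is stated for sequences indexed by $\NN$, so one must note that it applies verbatim to the subsequence $(F_n)_{n \in N}$ after reindexing, and that the (sub)sequential forms of $\liminf$ and $\limsup$ quoted in the Definition are the ones actually used.
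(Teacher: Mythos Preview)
Your proof is correct and follows essentially the same approach as the paper's: both take the convergent subsequence $N$, invoke Lemma~\ref{lem:Fellinout} to identify $F$ with the inner and outer limits along $N$, and then compare the full-sequence inner/outer limits with the subsequence ones. The only cosmetic difference is that the paper justifies the comparisons $\liminf_{n} F_n \subset \liminf_{n \in N} F_n$ and $\limsup_{n \in N} F_n \subset \limsup_n F_n$ via the ``hit an open set'' characterization, whereas you use the equivalent sequential characterization.
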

	
	\begin{proof}
		Let $N \subset \NN$ be such that $|N| = \infty$ and $F_n \to F$ in $\Fell(\E)$ as $n \to \infty$ in $N$. By Lemma~\ref{lem:Fellinout},
		\[
		\liminf_{n \to \infty, n \in N} F_n 
		= F 
		= \limsup_{n \to \infty, n \in N} F_n.
		\]
		Also, by definition of the inner and outer limits,
		\[
		\liminf_{n \to \infty} F_n 
		\subset 
		\liminf_{n \to \infty, n \in N} F_n 
		\quad \text{and} \quad
		\limsup_{n \to \infty, n \in N} F_n 
		\subset 
		\limsup_{n \to \infty} F_n.
		\]
		Indeed, if $F_n$ hits an open set $G$ for all but finitely many $n \in \NN$, then also $F_n$ hits $G$ for all but finitely many $n \in N$; while if $F_n$ hits $G$ for infinitely many $n \in N$, then $F_n$ hits $G$ for infinitely many $n \in \NN$.
		The stated inclusions follow.
	\end{proof}
	
	\subsection{Lower and upper semi-continuity}
	
	\begin{definition}
		\label{def:semicnt}
		A map from a topological space into $\Fell(\E)$ is \emph{lower semi-continuous} if and only if for every open $G \subset \E$, the inverse image of $\Fell_G$ is open.
		A map from a topological space into $\Fell(\E)$ is \emph{upper semi-continuous} if and only if for every compact $K \subset \E$, the inverse image of  $\Fell^K$ is open.
	\end{definition}
	
	By definition of the Fell topology, a map is continuous if it is both lower and upper semi-continuous, 
	
	\begin{lemma}[Lower semi-continuity]
		\label{lem:lsc}
		Let $(\DD, \rho)$ be a metric space, let $\E$ be a LCHS space, and let $f : \DD \to \Fell(\E)$. The following statements are equivalent:
		\begin{itemize}
			\item[(i)]
			For every open $G \subset \E$, the set $f^{-1}(\Fell_G)$ is open, i.e., $f$ is lower semi-continuous.
			\item[(ii)]
			If the points $x_n, x \in \DD$ and the open set $G \subset \E$ are such that $f(x) \cap G \ne \varnothing$ and $\lim_{n \to \infty} x_n = x$, then also $f(x_n) \cap G \ne \varnothing$ for all large $n$.
			\item[(iii)]
			If $\lim_{n \to \infty} x_n = x$ in $\DD$, then $f(x) \subset \liminf_{n \to \infty} f(x_n)$.
			\item[(iv)]
			The set $\{(x, F) \in \DD \times \Fell : f(x) \subset F \}$ is closed in the product topology.
		\end{itemize}
	\end{lemma}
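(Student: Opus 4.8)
The plan is to run the cycle of implications $(i)\Rightarrow(ii)\Rightarrow(iii)\Rightarrow(iv)\Rightarrow(i)$ — in fact I will prove the two equivalences $(i)\Leftrightarrow(ii)$ and $(ii)\Leftrightarrow(iii)$ and the implications $(iii)\Rightarrow(iv)$ and $(iv)\Rightarrow(i)$. Throughout I use that $\DD$ is a metric space and that $\Fell(\E)$ is metrizable (the Fell topology on an LCHS space is), so that $\DD$, $\Fell(\E)$ and the product $\DD\times\Fell(\E)$ are all metrizable; hence in each of them a set is open (resp.\ closed) iff it is sequentially open (resp.\ closed), and convergence can be tested along sequences.

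For $(i)\Leftrightarrow(ii)$: if $f^{-1}(\Fell_G)$ is open, $x_n\to x$ and $f(x)\cap G\ne\varnothing$, then $x\in f^{-1}(\Fell_G)$, so $x_n\in f^{-1}(\Fell_G)$ for all large $n$, i.e.\ $f(x_n)\cap G\ne\varnothing$ eventually; this is $(ii)$. Conversely, $(ii)$ says precisely that for every open $G$ the set $f^{-1}(\Fell_G)$ is sequentially open, hence open, which is $(i)$. For $(ii)\Leftrightarrow(iii)$: fix a convergent sequence $x_n\to x$ in $\DD$ and apply Lemma~\ref{lem:inner} to the sequence of closed sets $(f(x_n))_n$ with $A=f(x)$; it gives that $f(x)\subset\liminf_{n}f(x_n)$ holds if and only if for every open $G$ with $f(x)\cap G\ne\varnothing$ one has $f(x_n)\cap G\ne\varnothing$ for all large $n$. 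Since $(ii)$ and $(iii)$ both quantify over all sequences $x_n\to x$, the equivalence for each fixed sequence yields $(ii)\Leftrightarrow(iii)$.

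For $(iii)\Rightarrow(iv)$: let $(x_n,F_n)\to(x,F)$ in $\DD\times\Fell(\E)$ with $f(x_n)\subset F_n$ for all $n$. By $(iii)$, $f(x)\subset\liminf_{n}f(x_n)$; any $y$ in this inner limit is a limit of points $y_n\in f(x_n)\subset F_n$, so $\liminf_{n}f(x_n)\subset\liminf_{n}F_n$, and $F_n\to F$ in $\Fell(\E)$ gives $\liminf_{n}F_n=F$ by Lemma~\ref{lem:Fellinout}. Hence $f(x)\subset F$, so $\{(x,F):f(x)\subset F\}$ is sequentially closed, hence closed. For $(iv)\Rightarrow(i)$: fix an open $G\subset\E$. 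Then $\E\setminus G\in\Fell(\E)$, and the complement of $f^{-1}(\Fell_G)$ is $\{x\in\DD:f(x)\cap G=\varnothing\}=\{x\in\DD:f(x)\subset\E\setminus G\}$, which is the inverse image of the closed set $\{(x,F):f(x)\subset F\}$ under the continuous map $\DD\to\DD\times\Fell(\E)$, $x\mapsto(x,\E\setminus G)$; it is therefore closed, so $f^{-1}(\Fell_G)$ is open, which is $(i)$.

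The argument is essentially bookkeeping; the only place that calls for a little care is $(ii)\Leftrightarrow(iii)$, where the quantifier structure over open sets and over sequences must be matched precisely — and that matching is exactly the content of Lemma~\ref{lem:inner}. The other recurring point, harmless but to be invoked explicitly, is the use of metrizability to pass freely between topological and sequential formulations of openness and closedness.
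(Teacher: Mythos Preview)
Your proof is correct and follows essentially the same approach as the paper: the equivalences $(i)\Leftrightarrow(ii)$ and $(ii)\Leftrightarrow(iii)$ and the implication $(iii)\Rightarrow(iv)$ are argued identically. The only difference is in closing the cycle: the paper proves $(iv)\Rightarrow(iii)$ by contraposition (if $f(x)\not\subset\liminf_n f(x_n)$, pick an open $G$ witnessing this via Lemma~\ref{lem:inner} and use the constant sequence $F_n=\E\setminus G$ to contradict closedness), whereas you prove $(iv)\Rightarrow(i)$ directly via the continuous section $x\mapsto(x,\E\setminus G)$. Both arguments hinge on the same observation that $\{x:f(x)\cap G=\varnothing\}=\{x:f(x)\subset\E\setminus G\}$, so the difference is cosmetic.
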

	
	\begin{proof}
		The equivalence between (i) and (ii) is the sequential characterisation of an open set in a metric space; note that $x_{(n)} \in f^{-1}(\Fell_G)$ if and only if $f(x_{(n)}) \cap G \ne \emptyset$.
		
		The equivalence between (ii) and (iii) is the characterisation of the Fell inner limit in Lemma~\ref{lem:inner}.
		
		Suppose (iii) holds and let $(x_n, F_n)$ in $\DD \times \Fell$ be such that $f(x_n) \subset F_n$ for all $n$ as well as $x_n \to x$ in $\DD$ and $F_n \to F$ in $\Fell$ as $n \to \infty$. Then $f(x) \subset \liminf_{n \to \infty} f(x_n) \subset \liminf_{n \to \infty} F_n = F$, so (iv) holds.
		
		Conversely, suppose (iii) does not hold: then we can find $x_n \to x$ as $n \to \infty$ in $\DD$ such that $f(x)$ is not a subset of $\liminf_{n \to \infty} f(x_n)$.
		By Lemma~\ref{lem:inner}, there exists an open set $G \subset \E$ such that $f(x) \cap G \ne \emptyset$ but still $f(x_n) \cap G = \emptyset$ for infinitely many $n$.
		Writing $F = \E \setminus G$, which is closed, we find that the pairs $(x_n, F)$ satisfy $f(x_n) \subset F$ along a subsequence but that their limit $(x, F)$ satisfies $f(x) \not\subset F$. If follows that (iv) does not hold either.
	\end{proof}
	
	\begin{lemma}
		\label{lem:ulsc}
		Let $\E_1$ and $\E_2$ be two LCHS spaces and let $u : \E_1 \to \E_2$ be continuous. The map $\Fell(\E_1) \to \Fell(\E_2) : F \mapsto \cl(u(F))$ is lower semi-continuous.
	\end{lemma}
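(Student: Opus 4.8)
The plan is to check lower semi-continuity straight from Definition~\ref{def:semicnt}: I would show that for every open $G \subset \E_2$ the inverse image of $\Fell_G(\E_2)$ under the map $f : F \mapsto \cl(u(F))$ is open in $\Fell(\E_1)$. First I would record the elementary but crucial fact that the closure appearing in the definition of $f$ is invisible to open test sets: for an open $G$ and an arbitrary $A \subset \E_2$,
\[
\cl(A) \cap G \ne \emptyset \quad\Longleftrightarrow\quad A \cap G \ne \emptyset,
\]
since any point of $\cl(A) \cap G$ has $G$ itself as a neighbourhood, which must then meet $A$. Taking $A = u(F)$ gives $\cl(u(F)) \cap G \ne \emptyset$ if and only if $u(F) \cap G \ne \emptyset$.

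The second step is to push $u(F) \cap G \ne \emptyset$ through $u$: there is a point $x \in F$ with $u(x) \in G$ precisely when $F$ meets $u^{-1}(G)$. Chaining the two equivalences yields the identity
\[
f^{-1}\bigl(\Fell_G(\E_2)\bigr) = \Fell_{u^{-1}(G)}(\E_1).
\]
Since $u$ is continuous, $u^{-1}(G)$ is open in $\E_1$, and by the very definition of the Fell topology $\Fell_{u^{-1}(G)}(\E_1)$ is then an open subset of $\Fell(\E_1)$. This is exactly condition (i) of Lemma~\ref{lem:lsc}, i.e., lower semi-continuity of $f$; along the way one also records that $f$ really does map into $\Fell(\E_2)$, because $\cl(u(F))$ is closed by construction, which is precisely why the closure is inserted in the first place.

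There is no real obstacle here: the whole content sits in the first step, the observation that taking closures commutes with hitting open sets, and after that the argument is a one-line rewriting using continuity of $u$. If a sequential route were preferred, one could instead use the characterisation (iii) of Lemma~\ref{lem:lsc}, verifying $\cl(u(F)) \subset \liminf_{n} \cl(u(F_n))$ whenever $F_n \to F$ in $\Fell(\E_1)$, by pulling back an open neighbourhood of a limit point through $u$ and invoking that $F_n$ eventually hits $u^{-1}(G)$; but the direct open-set computation above is shorter and does not even use metrizability of $\Fell(\E_1)$.
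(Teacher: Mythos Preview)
Your proof is correct and follows essentially the same route as the paper's own argument: both verify lower semi-continuity directly from the definition by computing $f^{-1}(\Fell_G(\E_2)) = \Fell_{u^{-1}(G)}(\E_1)$ via the two equivalences $\cl(u(F)) \cap G \ne \emptyset \iff u(F) \cap G \ne \emptyset \iff F \cap u^{-1}(G) \ne \emptyset$, and then invoke continuity of $u$. Your write-up is simply more detailed in justifying each step.
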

	
	\begin{proof}
		Let $G \subset \E_2$ be open. We need to show that the collection $\mathcal{G} := \{F \in \Fell(\E_1) : \cl(u(F)) \cap G \ne \emptyset \}$ is open in $\Fell(\E_1)$. For $F \in \Fell(\E_1)$, it holds that $\cl(u(F))$ hits $G$ if and only if $u(F)$ hits $G$ if and only if $F$ hits $u^{-1}(G)$. It follows that $\mathcal{G} = \Fell_{u^{-1}(G)}(\E_1)$, which is open in $\Fell(\E_1)$, since $u : \E_1 \to \E_2$ is continuous and $G$ is open in $\E_2$, so that $u^{-1}(G)$ is open in $\E_1$.
	\end{proof}
	
	\begin{lemma}[Lower semi-continuity of domains and ranges]
		\label{lem:domrgelsc}
		The maps from $\Fell(\Rd \times \Rd)$ into $\Fell(\Rd)$ given by $T \mapsto \cl(\dom T)$ and $T \mapsto \cl(\rge T)$ are lower semi-continuous, i.e., if $T_n \to T$ as $n \to \infty$ in $\Fell(\Rd \times \Rd)$, then, in $\Fell(\Rd)$,
		\begin{align*}
		\cl(\dom T) &\subset \liminf_{n \to \infty} \cl(\dom T_n), &
		\cl(\rge T) &\subset \liminf_{n \to \infty} \cl(\rge T_n).
		\end{align*}
	\end{lemma}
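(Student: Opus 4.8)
The plan is to recognize the domain and range maps as instances of the pushforward map $F \mapsto \cl(u(F))$ already handled in Lemma~\ref{lem:ulsc}, and then to translate the resulting lower semi-continuity into the stated inner-limit inclusions via Lemma~\ref{lem:lsc}.

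First I would recall the elementary identities $\dom T = \proj_1(T)$ and $\rge T = \proj_2(T)$ for any $T \subset \Rd \times \Rd$, where $\proj_1, \proj_2 : \Rd \times \Rd \to \Rd$ denote the two coordinate projections; these maps are continuous (indeed linear). Consequently $\cl(\dom T) = \cl(\proj_1(T))$ and $\cl(\rge T) = \cl(\proj_2(T))$, so the two maps in the statement are precisely $F \mapsto \cl(u(F))$ with $u = \proj_1$, respectively $u = \proj_2$, and $\E_1 = \Rd \times \Rd$, $\E_2 = \Rd$, both of which are LCHS. Lemma~\ref{lem:ulsc} then applies verbatim and yields that both maps $\Fell(\Rd \times \Rd) \to \Fell(\Rd)$ are lower semi-continuous in the sense of Definition~\ref{def:semicnt}.

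It then remains to spell out the ``i.e.'' reformulation. Since $\Fell(\Rd \times \Rd)$ is metrizable (in fact a compact metric space in the Fell topology), Lemma~\ref{lem:lsc} applies with $\DD = \Fell(\Rd \times \Rd)$ and $\E = \Rd$: the lower semi-continuity of $f : T \mapsto \cl(\dom T)$, which is statement~(i) there, is equivalent to statement~(iii), namely that $T_n \to T$ in $\Fell(\Rd \times \Rd)$ implies $f(T) \subset \liminf_{n \to \infty} f(T_n)$, that is, $\cl(\dom T) \subset \liminf_{n \to \infty} \cl(\dom T_n)$. Running the same argument with $f : T \mapsto \cl(\rge T)$ gives the companion inclusion $\cl(\rge T) \subset \liminf_{n \to \infty} \cl(\rge T_n)$, which completes the proof.

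There is no genuine obstacle here: the only things to check are the set-theoretic identities $\dom T = \proj_1(T)$, $\rge T = \proj_2(T)$ and the continuity of the coordinate projections, after which the statement is a direct consequence of Lemmas~\ref{lem:ulsc} and~\ref{lem:lsc}.
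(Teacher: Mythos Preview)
Your proof is correct and follows essentially the same route as the paper: apply Lemma~\ref{lem:ulsc} to the continuous projections $\proj_1, \proj_2 : \Rdd \to \Rd$, then read off the inner-limit inclusions via Lemma~\ref{lem:lsc}. The paper's proof is just a two-line summary of exactly this argument.
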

	
	\begin{proof}
		The lower semi-continuity of the domain and range maps is a corollary to Lemma~\ref{lem:ulsc} applied to the projection mappings $\proj_j : \Rdd \to \Rd : (x_1, x_2) \mapsto x_j$ for $j \in \{1, 2\}$. The statement about the inner limits of sequences follows from Lemma~\ref{lem:lsc}(ii).
	\end{proof}
	
	Let $\Prob(\E)$ denote the set of Borel probability measures on $\E$. Weak convergence is denoted by $\wto$.
	
	\begin{lemma}[Lower semi-continuity of the support map]
		\label{lem:sptlsc}
		If $P_n \wto P$ in $\Prob(\E)$ as $n \to \infty$, then $\spt P \subset \liminf_{n \to \infty} \spt P_n$, that is, the map 
		\[ 
		\spt : \Prob(\E) \to \Fell(\E) : P \mapsto \spt P 
		\]
		is lower semi-continuous. In particular, $\left\{ (P, F) \in \Prob(\E) \times \Fell(\E) : \spt P \subset F \right\}$ is closed in the product topology.
	\end{lemma}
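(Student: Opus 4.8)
The plan is to verify condition~(iii) of Lemma~\ref{lem:lsc} for the map $\spt : \Prob(\E) \to \Fell(\E)$, from which both the lower semi-continuity (condition~(i)) and the closedness of the set $\{(P,F) : \spt P \subset F\}$ (condition~(iv)) follow at once. Since $\E$ is LCHS, it is metrizable and separable, so $\Prob(\E)$ with the weak topology is metrizable; hence it is legitimate to work with sequences and to apply Lemma~\ref{lem:lsc} with $\DD = \Prob(\E)$.

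So I would take $P_n \wto P$ in $\Prob(\E)$, fix an arbitrary $x \in \spt P$, and show $x \in \liminf_{n \to \infty} \spt P_n$. Let $G \subset \E$ be any open set with $x \in G$. By definition of the support, $x \in \spt P$ forces $P(G) > 0$. The Portmanteau theorem, applied to the open set $G$, gives $\liminf_{n \to \infty} P_n(G) \ge P(G) > 0$, so $P_n(G) > 0$ for all large $n$. For such $n$, the set $G$ cannot be contained in the $P_n$-null set $\E \setminus \spt P_n$ (here second countability of $\E$ is what guarantees $P_n(\E \setminus \spt P_n) = 0$), and therefore $G \cap \spt P_n \ne \varnothing$. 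Since $G$ was an arbitrary open neighbourhood of $x$, the definition of the inner limit yields $x \in \liminf_{n \to \infty} \spt P_n$, and as $x \in \spt P$ was arbitrary, $\spt P \subset \liminf_{n \to \infty} \spt P_n$.

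This establishes statement~(iii) of Lemma~\ref{lem:lsc} for $f = \spt$; by the equivalences in that lemma, $\spt$ is lower semi-continuous, which is the first assertion, and the set $\{(P, F) \in \Prob(\E) \times \Fell(\E) : \spt P \subset F\}$ is closed in the product topology, which is the ``in particular'' claim. I do not expect any genuine obstacle here: the only two points that deserve a moment's attention are invoking the Portmanteau inequality in the correct (open-set) direction, and the elementary fact that a Borel set of positive mass must meet the support, which is where second countability of $\E$ enters.
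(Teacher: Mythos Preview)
Your proposal is correct and is essentially the same argument as the paper's: both use the Portmanteau inequality on open sets together with the equivalence $\spt P \cap G \ne \varnothing \iff P(G) > 0$, and both conclude the ``in particular'' claim from Lemma~\ref{lem:lsc}(iv). The only cosmetic difference is that you verify condition~(iii) of Lemma~\ref{lem:lsc} (pointwise, via $x \in \spt P$) whereas the paper verifies condition~(ii) (directly via open $G$ meeting $\spt P$); these are equivalent by that lemma and the core step is identical.
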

	
	\begin{proof}
		Let $G \subset \E$ be open and suppose that $\spt(P) \cap G \ne \varnothing$. We need to show that $\spt(P_n) \cap G \ne \varnothing$ for all large $n$. But this follows from the portmanteau lemma for weak convergence, since $\liminf_{n \to \infty} P_n(G) \ge P(G)$ and since $\spt(P_{(n)}) \cap G \ne \varnothing$ if and only if $P_{(n)}(G) > 0$.
		The set $\{(P, F) : \spt P \subset F\}$ is closed in view of Lemma~\ref{lem:lsc}(iv).
	\end{proof}
	
	\subsection{Subspaces}
	
	\begin{proposition}
		\label{prop:Etrace}
		Let $\E' \subset \E$ be LCHS too, where $\E'$ is equipped with the trace topology. The map
		\begin{equation}
		\label{eq:Etrace}
		\Fell(\E) \to \Fell(\E') : F \mapsto F \cap \E'
		\end{equation}
		is upper semi-continuous.
		If $\E'$ is open in $\E$, the map is also lower semi-continuous and thus continuous.
	\end{proposition}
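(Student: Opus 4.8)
The plan is to check the two semi-continuity properties directly from Definition~\ref{def:semicnt}, exploiting the elementary fact that the restriction map $r\colon \Fell(\E)\to\Fell(\E')$, $F\mapsto F\cap\E'$, pulls the subbasic Fell sets of $\Fell(\E')$ back to subbasic Fell sets of $\Fell(\E)$. First I would record that $r$ is well defined: whenever $F$ is closed in $\E$, the set $F\cap\E'$ is closed in the trace topology of $\E'$, so $r(F)\in\Fell(\E')$.

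For upper semi-continuity, I would fix a compact $K\subset\E'$. Compactness being intrinsic to a topological space, $K$ is compact also as a subset of $\E$. Since $K\subset\E'$, one has $(F\cap\E')\cap K=F\cap(\E'\cap K)=F\cap K$ for every $F\in\Fell(\E)$, whence $r^{-1}\bigl(\Fell^K(\E')\bigr)=\{F\in\Fell(\E):F\cap K=\emptyset\}=\Fell^K(\E)$, which is open in $\Fell(\E)$ by the very definition of the Fell topology. This gives upper semi-continuity without any hypothesis on $\E'$.

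For lower semi-continuity I would additionally use that $\E'$ is open in $\E$: if $G\subset\E'$ is open in the trace topology, then $G=U\cap\E'$ for some open $U\subset\E$, and since $\E'$ is open in $\E$ the set $G$ is itself open in $\E$. As $G\subset\E'$, the same computation as before gives $(F\cap\E')\cap G=F\cap G$, hence $r^{-1}\bigl(\Fell_G(\E')\bigr)=\{F\in\Fell(\E):F\cap G\ne\emptyset\}=\Fell_G(\E)$, again open in $\Fell(\E)$. Thus $r$ is lower semi-continuous, and being both lower and upper semi-continuous it is continuous, because the Fell topology on $\Fell(\E')$ is generated by the collections $\Fell_G(\E')$ and $\Fell^K(\E')$.

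I do not anticipate a genuine obstacle here. The only points deserving a word of care are: a subset of $\E'$ that is compact in the trace topology is compact in $\E$ (so that $\Fell^K(\E)$ makes sense and is open); the openness of $\E'$ in $\E$ is really needed for the lower semi-continuity half, since otherwise an open $G\subset\E'$ need not be open in $\E$ and $\{F\in\Fell(\E):F\cap G\ne\emptyset\}$ need not be open in $\Fell(\E)$; and the two set identities $r^{-1}(\Fell^K(\E'))=\Fell^K(\E)$ and $r^{-1}(\Fell_G(\E'))=\Fell_G(\E)$, which are immediate once one notes $K\subset\E'$ and $G\subset\E'$.
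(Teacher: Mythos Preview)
Your proposal is correct and matches the paper's own proof almost verbatim: both verify well-definedness and then compute the preimages of the subbasic Fell sets, obtaining $r^{-1}(\Fell^K(\E'))=\Fell^K(\E)$ (the paper equivalently checks that $r^{-1}(\Fell_K(\E'))=\Fell_K(\E)$ is closed) and, when $\E'$ is open, $r^{-1}(\Fell_G(\E'))=\Fell_G(\E)$.
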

	
	\begin{proof}
		If $F$ is closed in $\E$, then $F \cap \E'$ is closed in $\E'$, so that the map in the statement is well-defined.
		
		Let $K \subset \E'$ be compact.
		Then $K$ is compact as a subset of $\E$ too. We need to show that the inverse image of the set $\Fell_K(\E') = \{ F' \in \Fell(\E') : F' \cap K \neq \emptyset \}$ by the map in \eqref{eq:Etrace} is closed in $\Fell(\E)$. 
		As $K \cap \E' = K$, this inverse image is $\{ F \in \Fell(\E) : F \cap K \neq \emptyset \} = \Fell_K(\E)$, which is indeed closed in $\Fell(\E)$. Hence the map in \eqref{eq:Etrace} is upper semi-continuous.
		
		Assume in addition that $\E'$ is open as a subset of $\E$.
		Let $G \subset \E'$ be open.
		Then $G$ is open as a subset of $\E$ too.
		The inverse image of $\Fell_G(\E')$ by the map~\eqref{eq:Etrace} is $\Fell_G(\E)$, which is open in $\Fell(\E)$, as required.
	\end{proof}
	
	\begin{lemma}
		\label{lem:inner-outer:sub}
		Let $\E' \subset \E$ be open. For every sequence $(F_n)_n$ in $\Fell(\E)$, we have
		\begin{align*}
		\left(\liminf_{n \to \infty} F_n\right) \cap \E' 
		&= \liminf_{n \to \infty} \left(F_n \cap \E'\right), &
		\left(\limsup_{n \to \infty} F_n\right) \cap \E' 
		&= \limsup_{n \to \infty} \left(F_n \cap \E'\right).	
		\end{align*}
		The inner and outer limits on the left and right-hand sides are in $\Fell(\E)$ and $\Fell(\E')$, respectively. 
	\end{lemma}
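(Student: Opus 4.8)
The plan is to argue pointwise, using the sequential description of the Fell inner and outer limits recorded just after their definition in this appendix, which is available because every LCHS space, and in particular the open subspace $\E'$, is metrizable. I would fix a metric $d$ on $\E$ inducing its topology; its restriction to $\E'$ induces the trace topology, so a sequence contained in $\E'$ converges in $\E'$ if and only if it converges in $\E$ to a point of $\E'$. The one topological fact I would lean on is that, $\E'$ being \emph{open}, any sequence in $\E$ converging to a point of $\E'$ lies in $\E'$ for all sufficiently large indices. Both sides of each identity are then genuinely closed subsets of $\E'$: the right-hand sides by Lemmas~\ref{lem:inner} and~\ref{lem:outer} applied inside $\E'$, and the left-hand sides because $\liminf_n F_n$ and $\limsup_n F_n$ are closed in $\E$ by the same lemmas and are intersected with $\E'$.

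For the inner limit I would prove the two inclusions directly. If $x \in (\liminf_n F_n) \cap \E'$, choose $x_n \in F_n$ with $x_n \to x$; since $x \in \E'$ and $\E'$ is open, $x_n \in \E'$ for all large $n$, so eventually $x_n \in F_n \cap \E'$ and $x_n \to x$ in $\E'$, whence $x \in \liminf_n (F_n \cap \E')$. Conversely, if $x \in \liminf_n (F_n \cap \E')$, then $x \in \E'$ and there are $y_n \in F_n \cap \E'$ (for $n$ large) with $y_n \to x$ in $\E'$, hence in $\E$; as $y_n \in F_n$ this gives $x \in \liminf_n F_n$, and combined with $x \in \E'$ it lands in the left-hand side. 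The outer-limit identity is the same argument with ``for all large $n$'' replaced throughout by ``for infinitely many $n$'' (i.e.\ working along an infinite index set $N$): openness of $\E'$ still forces those $x_n$, respectively $y_n$, into $\E'$ once they are close enough to the limit $x \in \E'$.

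The only point that needs a little care — not really an obstacle — is the convention built into the sequential characterization that one only needs approximating points $x_n \in F_n$ (respectively $y_n \in F_n \cap \E'$) for all sufficiently large $n$; this matters because $F_n \cap \E'$ may be empty for finitely or infinitely many $n$, so one cannot literally pick a point of $F_n \cap \E'$ for every single $n$. With that understanding the four inclusions above are immediate. (Alternatively one can bypass sequences and run the argument through the open/compact hitting descriptions of Lemmas~\ref{lem:inner} and~\ref{lem:outer}, using that, since $\E'$ is open in $\E$, the open subsets of $\E'$ are exactly the open subsets of $\E$ contained in $\E'$, and that for such a $G'$ and any $F\in\Fell(\E)$ one has $F \cap G' = (F \cap \E') \cap G'$; but the metric/sequential route is shorter.)
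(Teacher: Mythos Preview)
Your proof is correct. The sequential argument works cleanly: the key topological input is exactly what you identify, namely that openness of $\E'$ forces any sequence in $\E$ converging to a point of $\E'$ to eventually lie in $\E'$, and your remark about only needing approximating points for all sufficiently large indices handles the edge cases properly.

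The paper takes the route you mention parenthetically at the end: it argues entirely through the hit-and-miss criteria of Lemmas~\ref{lem:inner} and~\ref{lem:outer}, never invoking metrizability or sequences. For the inner limit this is essentially the same length as your argument, just phrased in terms of open sets $G$ rather than convergent points. For the outer limit the paper's proof is noticeably longer, going through compact sets and an auxiliary inclusion of the form $\limsup_n F_n \subset [\limsup_n(F_n\cap\E')] \cup (\E\setminus\E')$, whereas your subsequence argument dispatches both directions in one line each. What the paper's approach buys is that it stays within the abstract Fell framework and uses only the lemmas already proved in the appendix, without appealing to the sequential characterization (which the appendix states but does not prove); what your approach buys is brevity and transparency, at the cost of relying on metrizability of LCHS spaces.
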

	
	\begin{proof}
		First we show that $\left(\liminf_{n \to \infty} F_n\right) \cap \E'$ is a subset of $\liminf_{n \to \infty} \left(F_n \cap \E'\right)$. To that end, we apply the criterion of Lemma~\ref{lem:inner} with respect to the subspace $\E'$. Let $G \subset \E'$ be open and suppose that $\left(\liminf_{n \to \infty} F_n\right) \cap \E'$ hits $G$; we need to show that $F_n \cap \E'$ hits $G$ for all large $n$. But as $G$ is also an open subset of $\E$, this follows from the same lemma applied to $\E$ and the fact that $\liminf_{n \to \infty} F_n$ hits $G$, so that $F_n$ must hit $G$ for all large $n$.
		
		Next, we show that $\liminf_{n \to \infty} \left(F_n \cap \E'\right)$ is a subset of $\left(\liminf_{n \to \infty} F_n\right) \cap \E'$. Since the former is already a subset of $\E'$ by definition, it is sufficient to show that it is a subset of $\liminf_{n \to \infty} F_n$ too. To that end, we apply Lemma~\ref{lem:inner} in $\Fell(\E)$. Let $G \subseteq \E$ be open and suppose that $\liminf_{n \to \infty} \left(F_n \cap \E'\right)$ hits $G$; we need to show that $F_n$ hits $G$ for all large $n$. The set $\liminf_{n \to \infty} \left(F_n \cap \E'\right)$ hits $G \cap \E'$, which is open in $\E'$. By the same lemma but now applied to $\Fell(\E')$, the sets $F_n \cap \E'$ must hit $G \cap \E'$ for all large $n$. But then $F_n$ hits $G$ for all large $n$. The criterion in Lemma~\ref{lem:inner} on $\Fell(\E)$ is fulfilled and the inclusion is proved.
		
		
		Next, we show that $\left(\limsup_{n \to \infty} F_n\right) \cap \E'$ is contained in $\limsup_{n \to \infty} \left(F_n \cap \E'\right)$.
		Since both sets are subsets of $\E'$, it is equivalent to show that
		\begin{equation}
		\label{eq:outer:sub:aux}
		\limsup_{n \to \infty} F_n \subset
		\left[ \limsup_{n \to \infty} \left( F_n \cap \E' \right) \right]
		\cup \left( \E \setminus \E' \right).
		\end{equation}
		As $\E'$ is open, the right-hand side is closed in $\E$: indeed, $\limsup_{n \to \infty} \left(F_n \cap \E'\right)$ is closed in $\E'$ and thus of the form $F \cap \E'$ for some closed $F \subset \E$, and the set $(F \cap \E') \cup (\E \setminus \E') = F \cup (\E \setminus \E')$ is a union of two closed sets and thus closed itself.
		We can thus apply the implication (ii)\,$\implies$\,(iii) in Lemma~\ref{lem:outer}.
		To that end, let $K \subset \E$ be compact and suppose that the set on right-hand side in \eqref{eq:outer:sub:aux} misses $K$; we need to show that $F_n$ misses $K$ for all large $n$. Since $\E \setminus \E'$ misses $K$, we have that $K$ is a compact subset of $\E'$. Moreover, $K$ is missed by $\limsup_{n \to \infty} \left( F_n \cap \E' \right)$. By the implication (i)\,$\implies$\,(ii) in Lemma~\ref{lem:outer} applied to $\Fell(\E')$, the sequence $F_n \cap \E'$ misses $K$ for all large $n$. As $K$ is contained in $\E'$, it follows that $F_n$ misses $K$ for all large $n$ too, as required.
		
		Conversely, we show that $\limsup_{n \to \infty} \left(F_n \cap \E'\right)$ is contained in $\left(\limsup_{n \to \infty} F_n\right) \cap \E'$. 
		To that end, we apply the implication (ii)\,$\implies$\,(iii) in Lemma~\ref{lem:outer} to $\Fell(\E')$; note that $\left(\limsup_{n \to \infty} F_n\right) \cap \E'$ is closed in $\E'$. Let $K \subseteq \E'$ be compact and suppose that $K$ is missed by $\left(\limsup_{n \to \infty} F_n\right) \cap \E'$; we need to show that $F_n \cap \E'$ misses $K$ for all large $n$. Since $\E' \cap K = K$, the set $K$ is missed by $\limsup_{n \to \infty} F_n$. But $K$ is a compact subset of $\E$ too, and, in view of the implication (i)\,$\implies$\,(ii) in the same lemma applied to $\Fell(\E)$, the sets $F_n$ miss $K$ for all large $n$. But then $F_n \cap \E'$ misses $K$ for all large $n$, as required.
		%
		%
	\end{proof}

	\begin{lemma}
		\label{lem:tracesubseq}
		Let $\E' \subset \E$ be open and let $F_n, F \in \Fell(\E)$. The following statements are equivalent:
		\begin{itemize}
			\item[(i)] Every accumulation point $F'$ of $(F_n)_n$ in $\Fell(\E)$ satisfies $F' \cap \E' = F \cap \E'$.
			\item[(ii)] $F_n \cap \E' \to F \cap \E'$ as $n \to \infty$ in $\Fell(\E')$.
			\item[(iii)] The inner and outer limits $\underline{F} = \liminf_{n\to\infty} F_n$ and $\overline{F} = \limsup_{n\to\infty} F_n$  satisfy $\underline{F} \cap \E' = F \cap \E' = \overline{F} \cap \E'$.
			\item[(iv)] For every compact $K \subset \E'$ such that $F \cap K = \emptyset$ we have $F_n \cap K = \emptyset$ for all large $n$ and for every open $G \subset \E'$ such that $F \cap G \ne \emptyset$ we have $F_n \cap G \ne \emptyset$ for all large $n$.
		\end{itemize}
		Any accumulation point $F'$ of $(F_n)_n$ can serve as possible limit $F$.
	\end{lemma}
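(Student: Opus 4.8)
The plan is to establish the two equivalence chains (ii)\,$\Leftrightarrow$\,(iii)\,$\Leftrightarrow$\,(iv) and (i)\,$\Leftrightarrow$\,(iii), using the trace formulas of Lemma~\ref{lem:inner-outer:sub}, the criteria of Lemmas~\ref{lem:inner} and~\ref{lem:outer} applied inside $\Fell(\E')$, and the compactness of $\Fell(\E)$. Since $\E' \subset \E$ is open, Lemma~\ref{lem:inner-outer:sub} identifies $\underline F \cap \E'$ and $\overline F \cap \E'$ with $\liminf_{n}(F_n \cap \E')$ and $\limsup_{n}(F_n \cap \E')$ computed in $\Fell(\E')$; I would carry this identification throughout, so that (iii) reads: the inner and outer limits of $(F_n \cap \E')_n$ in $\Fell(\E')$ both equal $F \cap \E'$.

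With that reduction, (ii)\,$\Leftrightarrow$\,(iii) is just Lemma~\ref{lem:Fellinout} applied to the LCHS space $\E'$. For (iii)\,$\Leftrightarrow$\,(iv): since $\liminf \subset \limsup$ always and $F \cap \E'$ is closed in $\E'$, (iii) is equivalent to the pair of inclusions $F \cap \E' \subset \liminf_n(F_n \cap \E')$ and $\limsup_n(F_n \cap \E') \subset F \cap \E'$. The first inclusion, read through the criterion of Lemma~\ref{lem:inner} in $\Fell(\E')$ with $A = F \cap \E'$, is precisely the open-set clause of (iv); here one uses that for open $G \subset \E'$ one has $(F \cap \E') \cap G = F \cap G$ and $(F_n \cap \E') \cap G = F_n \cap G$. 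The second inclusion, read through Lemma~\ref{lem:outer} in $\Fell(\E')$ with $A = F \cap \E'$ (where the listed implications are equivalences because $A$ is closed), is precisely the compact-set clause of (iv). This portion is pure bookkeeping.

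The step I expect to be the real obstacle is (i)\,$\Leftrightarrow$\,(iii), since (i) only controls subsequential limits and must be upgraded to control of the inner and outer limits of the whole sequence; compactness of $\Fell(\E)$ is what powers this. The direction (iii)\,$\Rightarrow$\,(i) is short: any accumulation point $F'$ of $(F_n)_n$ satisfies $\underline F \subset F' \subset \overline F$ by Lemma~\ref{lem:acc}, so intersecting with $\E'$ and invoking (iii) gives $F' \cap \E' = F \cap \E'$. For (i)\,$\Rightarrow$\,(iii) I would prove the inclusions $\overline F \cap \E' \subset F \cap \E'$ and $F \cap \E' \subset \underline F \cap \E'$ separately. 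For the former, take $x \in \limsup_n(F_n \cap \E')$ and points $x_n \in F_n \cap \E'$ with $x_n \to x$ along an infinite $N$; by compactness extract $M \subset N$ with $F_n \to F'$ in $\Fell(\E)$; then $F'$ is an accumulation point, $x \in \liminf_{n \in M} F_n = F'$ (Lemma~\ref{lem:Fellinout}), and $x \in \E'$, so (i) yields $x \in F' \cap \E' = F \cap \E'$. For the latter, let $x \in F \cap \E'$ and suppose $x \notin \underline F$; then some open $G \ni x$ misses $F_n$ along an infinite $N$; extracting $M \subset N$ with $F_n \to F'$ makes $F'$ an accumulation point with $x \in F' \cap \E' = F \cap \E'$, hence $x \in F' = \limsup_{n \in M} F_n$, contradicting $F_n \cap G = \varnothing$ for all $n \in M$.

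For the closing remark, note that $\Fell(\E)$ being compact there is always at least one accumulation point $F'$, and that each of (i)--(iv) depends on $F$ only through the trace $F \cap \E'$; since (i) forces $F' \cap \E' = F \cap \E'$ whenever the equivalent conditions hold, the four statements are unchanged under replacement of $F$ by any such accumulation point.
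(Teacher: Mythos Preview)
Your argument is correct. The equivalences (ii)\,$\Leftrightarrow$\,(iii) and (iii)\,$\Leftrightarrow$\,(iv) match the paper's reasoning, only with (iv) routed through (iii) rather than directly through (ii); this is the same content dressed differently. The substantive difference is in how you connect (i) to the rest: the paper links (i) and (ii) directly by invoking the continuity of the trace map $\Fell(\E)\to\Fell(\E'):F\mapsto F\cap\E'$ (Proposition~\ref{prop:Etrace}), which immediately transports subsequential limits in $\Fell(\E)$ to subsequential limits in $\Fell(\E')$ and makes both directions short. You instead link (i) and (iii) by hand, using Lemma~\ref{lem:acc} for (iii)\,$\Rightarrow$\,(i) and two pointwise subsequence-extraction arguments for (i)\,$\Rightarrow$\,(iii). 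Your route is a little longer but has the advantage of being self-contained: it does not rely on Proposition~\ref{prop:Etrace}, whereas the paper's proof does. Both approaches use compactness of $\Fell(\E)$ in the same essential way.
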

	
	\begin{proof}
		\emph{(i) implies (ii).} ---
		We proceed by contraposition: we show that if statement~(ii) does not hold, statement~(i) does not hold either. Suppose that $F_n \cap \E'$ does not converge to $F \cap \E'$. As $\Fell(\E')$ is compact, we can find $X \in \Fell(\E')$ with $X \neq F \cap \E'$ and a subsequence $N \subset \NN$ such that $F_n \cap \E' \to X$ as $n \to \infty$ in $N$ in $\Fell(\E')$. As $\Fell(\E)$ is compact, we can find $F' \in \Fell(\E)$ and $M \subset N$ with $|M| = \infty$ such that $F_n \to F'$ as $n \to \infty$ in $M$. By Proposition~\ref{prop:Etrace}, we have $F_n \cap \E' \to F' \cap \E'$ as $n \to \infty$ in $M$. But then $F' \cap \E' = X \ne F \cap \E'$, so that (i) does not hold.
		
		\emph{(ii) implies (i).} ---
		Let $F' \in \Fell(\E)$ and $N \subset \NN$ with $|N| = \infty$ be such that $F_n \to F'$ in $\Fell(\E)$ as $n \to \infty$ in $N$.
		By Proposition~\ref{prop:Etrace}, we have $F_n \cap \E' \to F' \cap \E'$ in $\Fell(\E')$ as $n \to \infty$ in $N$.
		By~(ii), we must have $F' \cap \E' = F \cap \E'$, as required.
		
		\emph{(ii) and (iii) are equivalent.} ---
		We have $\lim_{n \to \infty} \left(F_n \cap \E'\right) = F \cap \E'$ in $\Fell(\E')$ if and only if the inner and outer limits of $F_n \cap \E'$ in $\Fell(\E')$ are both equal to $F \cap \E'$. The identities in Lemma~\ref{lem:inner-outer:sub} then imply the equivalence of (ii) and (iii).
		
		\emph{(ii) and (iv) are equivalent.} ---
		This is the characterisation of convergence in $\Fell(\E')$. (Here, it is not required that $\E'$ is open, it suffices that it is LCHS in the trace topology.)
	\end{proof}
	
	\begin{proposition}
		\label{prop:FellconvrelVweak}
		Let $\E' \subset \E$ be open.
		Let $\hat{F}_n$ be random elements in $\Fell(\E)$ and let $F \in \Fell(\E)$. The following statements are equivalent:
		\begin{itemize}
			\item[(i)] $\hat{F}_n \cap \E' \to F \cap \E'$ weakly in $\Fell(\E')$ as $n \to \infty$.
			\item[(ii)] For any random $\hat{F}$ in $\Fell(\E)$ that can arise as the weak limit in $\Fell(\E)$ of some subsequence of $\hat{F}_n$, we have $\hat{F} \cap \E' = F \cap \E'$ almost surely.
			\item[(iii)] For every compact $K \subset \E'$ with $F \cap K = \varnothing$ we have $\prob(\hat{F}_n \cap K = \varnothing) \to 1$ and for every open $G \subset \E'$ with $F \cap G \ne \varnothing$ we have $\prob(\hat{F}_n \cap G \ne \varnothing) \to 1$ as $n \to \infty$.
		\end{itemize} 
	\end{proposition}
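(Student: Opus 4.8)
The plan is to run all three equivalences through the continuity of the restriction map $H \mapsto H \cap \E'$ supplied by Proposition~\ref{prop:Etrace} (which is where $\E'$ open is used), the compactness of the Fell spaces $\Fell(\E)$ and $\Fell(\E')$, and the elementary principle that weak convergence of a sequence of random elements of a metric space to a \emph{deterministic} limit is the same as convergence in probability to that limit, which in turn means that for every open neighbourhood of the limit the probability of landing in it tends to $1$. No measurability subtleties arise: $\hat F_n \cap \E'$ is a genuine random element of $\Fell(\E')$, being the image of $\hat F_n$ under the continuous (hence Borel) restriction map, and the sets $\Fell_G(\E')$, $\Fell^K(\E')$ are open.

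I would first prove (i)$\iff$(iii) by unpacking the Fell topology on $\Fell(\E')$. A neighbourhood base at the deterministic point $F \cap \E'$ is given by the basic sets $\Fell_{G_1}(\E') \cap \cdots \cap \Fell_{G_p}(\E') \cap \Fell^{K}(\E')$ with $G_i \subset \E'$ open and $K \subset \E'$ compact satisfying $(F\cap\E') \cap G_i \neq \varnothing$ and $(F\cap\E') \cap K = \varnothing$ (several constraints of the form $\Fell^{K_j}$ being merged via $\Fell^{K_1}\cap\Fell^{K_2} = \Fell^{K_1 \cup K_2}$). Since the probability of a finite intersection of events each of probability tending to $1$ also tends to $1$, and since each subbasic set is itself a basic neighbourhood, statement~(i) is equivalent to: $\prob(\hat F_n \cap \E' \in \Fell_G(\E')) \to 1$ for every open $G \subset \E'$ with $(F\cap\E')\cap G \neq \varnothing$, and $\prob(\hat F_n \cap \E' \in \Fell^K(\E')) \to 1$ for every compact $K \subset \E'$ with $(F\cap\E')\cap K = \varnothing$. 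For $G, K \subset \E'$ one has $(\hat F_n \cap \E') \cap G = \hat F_n \cap G$, $(\hat F_n \cap \E') \cap K = \hat F_n \cap K$, $(F\cap\E')\cap G = F \cap G$ and $(F\cap\E')\cap K = F\cap K$; substituting these identities rewrites the displayed conditions verbatim as statement~(iii).

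For (i)$\implies$(ii), let $\hat F$ be a weak limit in $\Fell(\E)$ of some subsequence $(\hat F_{n_k})_k$. By Proposition~\ref{prop:Etrace} the restriction map is continuous, so the continuous mapping theorem gives $\hat F_{n_k} \cap \E' \to \hat F \cap \E'$ weakly in $\Fell(\E')$; but by~(i) the whole sequence, hence the subsequence, $\hat F_{n_k} \cap \E'$ converges weakly to $F \cap \E'$. Uniqueness of weak limits on the metric space $\Fell(\E')$ forces the law of $\hat F \cap \E'$ to be the Dirac mass at $F \cap \E'$, that is, $\hat F \cap \E' = F \cap \E'$ almost surely.

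The implication (ii)$\implies$(i) I would prove by contraposition, and this is where the only real care is needed. If (i) fails then, $\Fell(\E')$ being compact, the laws of $\hat F_n \cap \E'$ are tight and some subsequence of them converges weakly to a law $\mu$ on $\Fell(\E')$ with $\mu(\{F\cap\E'\}) < 1$. Along that subsequence, compactness of $\Fell(\E)$ lets us extract a further subsequence along which $\hat F_n$ converges weakly to some random $\hat F$ in $\Fell(\E)$; applying the continuous mapping theorem to the restriction map once more, $\hat F_n \cap \E'$ converges weakly to $\hat F \cap \E'$ along this further subsequence, so the law of $\hat F \cap \E'$ equals $\mu$. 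Thus $\hat F$ is a subsequential weak limit of $(\hat F_n)_n$ with $\prob(\hat F \cap \E' = F \cap \E') = \mu(\{F\cap\E'\}) < 1$, contradicting~(ii). The delicate point is exactly this double extraction together with the bookkeeping that the limiting law of the restrictions coincides with the law of the restriction of the weak limit; once Proposition~\ref{prop:Etrace}, Prokhorov's theorem and the subbasic description of the Fell topology are in hand, the remaining steps are routine.
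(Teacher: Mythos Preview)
Your proof is correct and follows essentially the same route as the paper: both arguments hinge on the continuity of the restriction map from Proposition~\ref{prop:Etrace}, compactness of $\Fell(\E)$, and the characterisation of weak convergence to a deterministic limit via open neighbourhoods. The only cosmetic difference is that the paper closes the cycle via (ii)\,$\Rightarrow$\,(iii) by contraposition (working directly with a single subbasic set $\mathcal{U}=\Fell^K$ or $\Fell_G$ on which the probability fails), whereas you prove (ii)\,$\Rightarrow$\,(i) by first identifying a non-degenerate subsequential limit law $\mu$ and then pulling back to $\Fell(\E)$; both arguments are equally short.
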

	
	\begin{proof}
		\emph{(i) and (iii) are equivalent.} ---
		By the Portmanteau Theorem for weak convergence of probability measures in metric spaces, we have $\hat{F}_n \cap \E' \to F \cap \E'$ weakly in $\Fell(\E')$ if and only if, for every open $\mathcal{U} \subset \Fell(\E')$ containing $F$, we have $\Pr(\hat{F}_n \cap \E' \in \mathcal{U}) \to 1$ as $n \to \infty$.	
		Any such open $\mathcal{U} \subset \Fell(\E')$ is a union of finite intersections of sets of the form $\Fell_G(\E')$ with $G \subset \E'$ open in $\E'$ and $\Fell^K(\E')$ with $K \subset \E'$ compact in $\E'$.
		Since $\E'$ is open, $G \subset \E'$ is open in $\E'$ if and only if it is open in $\E$; further, $K$ is compact as a subset of $\E'$ if and only if it is compact as a subset of $\E$, and this without any further conditions on $\E'$.
		The equivalence now follows easily since $\hat{F}_n \cap \E' \in \Fell^K(\E')$ if and only if $\hat{F}_n \cap K = \varnothing$ (remember $K = K \cap \E'$) and $\hat{F}_n \cap \E' \in \Fell_G(\E')$ if and only if $\hat{F}_n \cap G \ne \varnothing$ (remember $G = G \cap \E'$).
		
		\emph{(i) implies (ii).} ---
		Suppose that $\hat{F}_n \to \hat{F}$ weakly in $\Fell(\E)$ as $n \to \infty$ along some subsequence $N$. By Proposition~\ref{prop:Etrace} and the continuous mapping theorem, we have $\hat{F}_n \cap \E' \to \hat{F} \cap \E'$ weakly in $\Fell(\E')$ as $n \to \infty$ in $N$. But by (i), also $\hat{F}_n \cap \E' \to F \cap \E'$ weakly in $\Fell(\E')$ as $n \to \infty$. It follows that the distribution of $\hat{F} \cap \E'$ as a random element in $\Fell(\E')$ is degenerate at $F \cap \E'$.
		
		\emph{(ii) implies (iii).} ---
		We proceed by contraposition.
		Suppose (iii) does not hold.
		Then there exists a set of the form $\mathcal{U} = \Fell^K(\E')$ or $\mathcal{U} = \Fell_G(\E')$ with $K \subset \E'$ compact or $G \subset \E'$ open such that $F \cap \E' \in \mathcal{U}$ but still $\liminf_{n \to \infty} \prob(\hat{F}_n \cap \E' \in \mathcal{U}) < 1$.
		We can therefore find $\eps > 0$ and an infinite subset $N$ of $\NN$ such that $\prob(\hat{F}_n \cap \E' \in \mathcal{U}) \le 1 - \eps$ for all $n \in N$. Let $\hat{F}$ be the weak limit of $\hat{F}_n$ as $n \to \infty$ along some further infinite subset $M$ of $N$; recall that $\Fell(\E)$ is compact, so that, by Prohorov's theorem, such a subsequence exists.
		In view of Proposition~\ref{prop:Etrace} and the continuous mapping theorem, we have $\hat{F}_n \cap \E' \to \hat{F} \cap \E'$ weakly in $\Fell(\E')$ as $n \to \infty$ in $M$.
		By the Portmanteau Theorem, 
		\[ 
		\prob(\hat{F} \cap \E' \in \mathcal{U}) 
		\le \liminf_{n \to \infty, n \in M} 
		\prob(\hat{F}_n \cap \E' \in \mathcal{U}) 
		\le 1 - \eps. 
		\]
		But since $\mathcal{U}$ is an open neighbourhood of $F \cap \E'$, this means that $\hat{F} \cap \E'$ is not equal to $F \cap \E'$ almost surely, so that (ii) does not hold either.
	\end{proof}
	
\subsection{Measurability}	
\label{app:meas}	

When dealing with random multivalued mappings, measurability issues require some attention. Two useful auxiliary results related to the Hausdorff distance are given next. The Borel $\sigma$-field on Fell space is the one generated by the open sets in the Fell topology. 

\begin{lemma}
	\label{lem:dHmeas}
	Let $K, L \subset \Rd$ be non-empty and compact. The map
	\[
		\Fell(\Rdd) \to [0, \infty] : T \mapsto d_H(T(K), L)
	\]
	is Borel measurable.
\end{lemma}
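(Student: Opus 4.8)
The plan is to write the function as a maximum of two pieces and to reduce the measurability of each piece to countable combinations of hitting events $\Fell_A$ with $A$ compact, which are Fell-closed—hence Borel—by the very definition of the Fell topology. Put
\[
	\alpha(T) = \sup_{y \in T(K)} d(y, L), \qquad
	\beta(T) = \sup_{z \in L} d(z, T(K)),
\]
with the conventions $\sup\emptyset = 0$ and $d(z, \emptyset) = \infty$. Unwinding the definition of the Hausdorff semi-metric, $d_H(T(K), L) = \max\{\alpha(T), \beta(T)\}$ for every $T \in \Fell(\Rdd)$; when $T(K) = \emptyset$ this reads $\max\{0, \infty\} = \infty$, matching the convention that the Hausdorff distance between the non-empty set $L$ and the empty set is $\infty$. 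Hence it suffices to prove that $\alpha$ and $\beta$ are Borel measurable.

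For $\alpha$: for $t \ge 0$ one has $\alpha(T) > t$ if and only if there is $(x, y) \in T$ with $x \in K$ and $d(y, L) > t$, i.e., if and only if $T$ meets $K \times U_t$ with $U_t = \{ y \in \Rd : d(y, L) > t \}$ open (and for $t < 0$, $\{\alpha > t\} = \Fell(\Rdd)$). Since $\Rd$ is LCHS, the open set $U_t$ is $\sigma$-compact, say $U_t = \bigcup_m C_m$ with each $C_m$ compact, so $K \times U_t = \bigcup_m (K \times C_m)$ is a countable union of compact sets and therefore $\{\alpha > t\} = \bigcup_m \Fell_{K \times C_m}(\Rdd)$; each $\Fell_A$ with $A$ compact is the complement of the subbasic Fell-open set $\Fell^A$, hence closed. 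Thus every superlevel set of $\alpha$ is Borel and $\alpha$ is Borel measurable. The same mechanism applies to $\beta$, but with an extra reduction to a countable index set: since, for fixed $T$, the map $z \mapsto d(z, T(K))$ is $1$-Lipschitz, we have $\beta(T) = \sup_{z \in L_0} d(z, T(K))$ for any fixed countable dense subset $L_0 \subset L$. For a single $z$ and any $s > 0$, $d(z, T(K)) < s$ if and only if $T$ meets $K \times (z + s\oball)$, again a $\sigma$-compact set, so $\{ T : d(z, T(K)) < s \}$ is a countable union of sets $\Fell_A$ with $A$ compact and hence Borel; thus $T \mapsto d(z, T(K))$ is Borel for each $z$, and $\beta = \sup_{z \in L_0}(\,\cdot\,)$ is a countable supremum of Borel functions, hence Borel. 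Therefore $d_H(T(K), L) = \max\{\alpha(T), \beta(T)\}$ is Borel.

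The routine ingredients are the identity $d_H(T(K), L) = \max\{\alpha, \beta\}$ with its bookkeeping for $T(K) = \emptyset$, and the standard facts that open subsets of an LCHS space are $\sigma$-compact and that $\Fell_A$ is Fell-closed for compact $A$. The one point deserving a little care is the reduction of the supremum over the uncountable set $L$ in $\beta$ to a supremum over a countable dense subset, which is exactly what rests on the Lipschitz continuity of $z \mapsto d(z, A)$; beyond this I do not anticipate any genuine obstacle.
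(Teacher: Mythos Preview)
Your proof is correct and follows essentially the same route as the paper: both arguments reduce the Hausdorff distance to its two one-sided components and express the relevant level sets as countable combinations of hitting/missing events with compact test sets. The only presentational difference is in the second component: where you pass to a countable dense $L_0 \subset L$ via Lipschitz continuity, the paper simply observes that $\{T : L \subset T(K) + \lambda\ball\} = \bigcap_{y \in L} \Fell_{K \times (y + \lambda\ball)}$ is an (uncountable) intersection of closed sets and hence closed outright---so the density reduction, while correct, is not actually needed.
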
	

\begin{proof}
	Let $\lambda > 0$. It is sufficient to show that the set
	\[
		\left\{ T : d_H(T(K), L) \le \lambda \right\}
	\]
	is Borel measurable. Now $d_H(T(K), L) \le \lambda$ if and only if 
	$T(K) \subset L + \lambda \ball$ and $L \subset T(K) + \lambda \ball$. 
	
	On the one hand, $T(K) \subset L + \lambda \ball$ if and only if $T$ misses $K \times (L + \lambda \ball)^c$. Writing the open set $(L + \lambda \ball)^c$ as a countable union of compacta, say $\bigcup_n M_n$ with $M_n \subset \Rd$, we find
	\[
		\left\{ T : T(K) \subset L + \lambda \ball \right\} 
		= \bigcap_n \Fell^{K \times M_n},
	\]
	a countable intersection of open sets in $\Fell$ and thus Borel measurable.
	
	On the other hand, $L \subset T(K) + \lambda \ball$ if $T(K)$ hits $y + \lambda \ball$ for every $y \in L$, i.e., $T$ hits $K \times (y + \lambda \ball)$ for every $y \in L$. But $K \times (y + \lambda \ball)$ is compact and thus is
	\[
		\left\{ T : L \subset T(K) + \lambda \ball \right\}	
		= \bigcap_{y \in L} \Fell_{K \times (y + \lambda \ball)}
	\]
	an intersection of closed sets and therefore closed as well.
\end{proof}

	Recall that a subset of a topological space is called an $F_\sigma$-set if it can be written as a countable union of closed sets. In an LCHS space, open sets are $F_\sigma$-sets, since they can be written as a countable union of compact sets. 
	
	\begin{lemma}
		\label{lem:measdH}
		Let $A \subset \reals^d$ be an $F_\sigma$-set and let $g : A \to \reals^d$ be continuous. The map
		\[
		\Fell(\reals^d \times \reals^d) \to [0, \infty] :
		T \mapsto \sup_{x \in A} d_H(T(x), \{g(x)\})
		\]
		is Borel measurable.
	\end{lemma}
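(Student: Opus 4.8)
The plan is to reduce to the case where the index set is compact and then, for each threshold, to express the relevant sublevel set of the map as an intersection of Fell hit-and-miss sets.

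\textbf{Reduction to a compact index set.} Since $A$ is an $F_\sigma$-subset of $\reals^d$, it is $\sigma$-compact: one can write $A = \bigcup_n K_n$ with each $K_n \subset \reals^d$ compact, and, if $A \ne \varnothing$, each $K_n$ non-empty. Because
\[
	\sup_{x \in A} d_H\bigl(T(x), \{g(x)\}\bigr) = \sup_n \ \sup_{x \in K_n} d_H\bigl(T(x), \{g(x)\}\bigr)
\]
and a countable supremum of $[0,\infty]$-valued Borel maps is Borel, it suffices to treat the case $A = K$ compact and non-empty (the case $A = \varnothing$ being trivial); the restriction of $g$ to such a $K$ is still continuous.

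\textbf{A hit-and-miss description of the sublevel sets.} Fix a compact non-empty $K$ and a continuous $g : K \to \reals^d$. Since the Borel $\sigma$-field of $[0,\infty]$ is generated by the intervals $[0,\lambda]$ with $\lambda > 0$, it is enough to show that
\[
	\mathcal{A}_\lambda := \bigl\{ T \in \Fell(\reals^d \times \reals^d) : \textstyle\sup_{x \in K} d_H(T(x), \{g(x)\}) \le \lambda \bigr\}
\]
is Borel for every $\lambda > 0$. Keeping track of the convention for the Hausdorff distance to the empty set, one checks that for a closed $C \subset \reals^d$ and $p \in \reals^d$ one has $d_H(C, \{p\}) \le \lambda$ if and only if $\varnothing \ne C \subset p + \lambda \ball$: indeed, when $C \ne \varnothing$ one has $d_H(C, \{p\}) = \sup_{y \in C} |y - p|$ (possibly $+\infty$), the inequality $d(p, C) \le \sup_{y \in C} |y - p|$ being automatic. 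Applying this pointwise in $x \in K$, we obtain
\[
	\mathcal{A}_\lambda = \Bigl( \bigcap_{x \in K} \Fell_{\{x\} \times (g(x) + \lambda \ball)} \Bigr) \cap \Fell^{B}, \qquad B := \{ (x, y) \in \reals^d \times \reals^d : x \in K,\ |y - g(x)| > \lambda \},
\]
since the condition ``$T$ meets the compact set $\{x\} \times (g(x) + \lambda \ball)$'' simultaneously forces $T(x) \ne \varnothing$ and one half of $T(x) \subset g(x) + \lambda \ball$, while ``$T$ misses $B$'' is exactly the remaining half, uniformly over $x \in K$.

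\textbf{Borel measurability of the two factors.} Each set $\{x\} \times (g(x) + \lambda \ball)$ is compact, so $\Fell_{\{x\} \times (g(x) + \lambda \ball)}$ is closed in the Fell topology (its complement $\Fell^{\{x\} \times (g(x) + \lambda \ball)}$ is subbasic open), and an arbitrary intersection of closed sets is closed, hence Borel. For the second factor, write $B = \bigcup_{m, n \ge 1} B_{m,n}$ with $B_{m,n} := \{ (x, y) : x \in K,\ |y - g(x)| \ge \lambda + 1/m,\ |y| \le n \}$; by continuity of $g$ and closedness of $K$ each $B_{m,n}$ is a closed and bounded, hence compact, subset of $\reals^d \times \reals^d$, so $T \cap B = \varnothing$ if and only if $T \in \bigcap_{m,n} \Fell^{B_{m,n}}$, a countable intersection of subbasic open sets and therefore a $G_\delta$. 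Thus $\mathcal{A}_\lambda$ is Borel, and the lemma follows.

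\textbf{Main obstacle.} The delicate point is the domain condition $K \subset \dom T$ that is hidden in $\sup_{x \in K} d_H(T(x), \{g(x)\}) < \infty$: handled naively as $\bigcap_{x \in K} \bigcup_n \Fell_{\{x\} \times n\ball}$ it is an \emph{uncountable} intersection of $F_\sigma$-sets and is not visibly Borel. The resolution is to fold it into the containment requirement, so that for each $x$ the single \emph{closed} condition of meeting the \emph{compact} ball $\{x\} \times (g(x) + \lambda \ball)$ already delivers non-emptiness of $T(x)$, leaving only the genuinely $G_\delta$ ``miss'' condition. A secondary subtlety, worth verifying carefully, is the bookkeeping with the Hausdorff-distance conventions when $T(x)$ is empty or unbounded, which is why the equivalence $d_H(C,\{p\}) \le \lambda \iff \varnothing \ne C \subset p + \lambda\ball$ should be checked for closed (not merely compact) $C$.
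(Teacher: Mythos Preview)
Your proof is correct and follows essentially the same strategy as the paper: reduce via $\sigma$-compactness to a compact (the paper uses closed) index set, then for each $\lambda > 0$ write the sublevel set as the intersection of an uncountable family of closed ``hit'' sets $\Fell_{\{x\}\times(g(x)+\lambda\ball)}$ with a $G_\delta$ ``miss'' set obtained by exhausting the complement of the $\lambda$-tube by compacta. The paper's version differs only cosmetically, working over closed $F$ rather than compact $K$ and expressing the miss condition as $\Fell^{G\cap(F\times\reals^d)}=\bigcap_n\Fell^{K_n\cap(F\times\reals^d)}$ for an open $G$ exhausted by compacta $K_n$.
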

	
	\begin{proof}
		We can write $A = \bigcup_{n \in \NN} F_n$ where $F_n \subset \reals^d$ is closed for every $n \in \NN$. Since
		\[
		\sup_{x \in A} d_H(T(x), \{g(x)\})
		= \sup_{n \in \NN} \sup_{x \in F_n} d_H(T(x), \{g(x)\}) 
		\]
		and since the pointwise supremum of a sequence of Borel measurable functions is Borel measurable, we can without loss of generality assume that $A$ is closed itself, say $A = F \in \Fell(\reals^d)$.
		
		Let $\lambda > 0$. It is sufficient to show that the set
		\[
		\left\{ 
		T :
		\forall x \in F, \, d_H(T(x), \{g(x)\}) \le \lambda
		\right\}
		\]
		is Borel measurable in $\Fell(\reals^d \times \reals^d)$. For fixed $x \in \reals^d$, we have
		\[
		d_H(T(x), \{g(x)\}) \le \lambda \iff
		\left[ T(x) \subset g(x) + \lambda \ball \text{ and } T(x) \cap (g(x) + \lambda \ball) \ne \varnothing \right].
		\]
		For fixed $x \in \reals^d$, the set $g(x) + \lambda \ball$ is compact and thus the set
		\[ \{ T : T(x) \cap (g(x) + \lambda \ball) \ne \varnothing \} 
		= \left\{ T : T \cap \left[\{x\} \times \left(g(x) + \lambda \ball\right)\right] \ne \varnothing \right\} 
		= \Fell_{g(x) + \lambda \ball}
		\]
		is closed in $\Fell$. Hence, the intersection of these sets over all $x \in F$ is closed too. It remains to be shown that the set
		\[
		\{ T : \forall x \in F, \, T(x) \subset g(x) + \lambda \ball \}
		\]
		is Borel measurable. The tube
		\[
		\{ (x, y) \in F \times \reals^d : |y - g(x)| \le \lambda \}
		\]
		is closed in $\reals^d \times \reals^d$, since $F$ is closed and $g$ is continuous.\footnote{If $(x_n,y_n)$ belongs to the tube for every $n$ and if $(x_n, y_n)$ converges to $(x,y)$, then $x_n \to x \in F$ since $F$ is closed and $|y - g(x)| = \lim_{n \to \infty} |y_n - g(x_n)| \le \lambda$ since $g$ is continuous.}
		The complement of the tube in $\reals^d \times \reals^d$, say $G$, is thus open. 
		The multivalued map $T \in \Fell(\reals^d \times \reals^d)$ is such that $T(x) \subset g(x) + \lambda \ball$ for all $x \in F$ if and only if $T$ misses $G \cap (F \times \reals^d)$. Let $K_n$ be a sequence of compact sets in $\reals^d \times \reals^d$ such that $G = \bigcup_{n \in \NN} K_n$. Then 
		$T \in \Fell$ misses $G \cap (F \times \reals^d)$ if and only if $T$ misses each $K_n \cap (F \times \reals^d)$, i.e.,
		\[
		\Fell^{G \cap (F \times \reals^d)} = \bigcap_{n \in \mathbb{N}} \Fell^{K_n \cap (F \times \reals^d)}.
		\]
		But $K_n \cap (F \times \reals^d)$ is compact, being a closed subset of $K_n$. Hence, $\Fell^{G \cap (F \times \reals^d)}$ is a countable intersection of open subsets of $\Fell$ and thus Borel measurable.
		
		All in all, we find
		\[
		\{ T : T(x) \cap (g(x) + \lambda \ball) \ne \varnothing \}
		=
		\bigcap_{x \in F} \Fell_{g(x) + \lambda \ball}
		\cap
		\bigcap_{n \in \mathbb{N}} \Fell^{K_n \cap (F \times \reals^d)},
		\]
		the intersection of a closed set with a countable intersection of open sets, and thus Borel measurable.
	\end{proof}

\end{appendix}


\end{document}